\numberwithin{equation}{section} 
\tikzset{
  symbol/.style={
    draw=none,
    every to/.append style={
      edge node={node [sloped, allow upside down, auto=false]{$#1$}}}
  }
}
\definecolor{red}{rgb}{.7,0,0}
\definecolor{blue}{rgb}{0,0,1}
\newcolumntype{H}{>{\setbox0=\hbox\bgroup}c<{\egroup}@{}}
\def\mcC{\mathcal{C}}
\def\mcD{\mathcal{D}}
\def\I{\mathcal{I}}
\def\J{\mathcal{J}}
\def\mcK{\mathcal{K}}
\def\mcL{\mathcal{L}}
\def\M{\mathcal{M}}
\def\mcP{\mathcal{P}}
\def\mcR{\mathcal{R}}
\def\mcX{\mathcal{X}}
\def\mcU{\mathcal{U}}
\def\bbI{\mathbb{I}}
\def\bbR{\mathbb{R}}
\def\bbZ{\mathbb{Z}}
\def\bbN{\mathbb{N}}
\def\bbP{\mathbb{P}}
\def\bbF{\mathbb{F}}
\def\bbX{\mathbb{X}}
\def\OA{\mathrm{OA}}
\def\Oh{\mathcal{O}}
\def\id{\mathrm{id}}
\def\g{\mathrm{K}}
\def\rr{\mathrm{r}}
\newcommand{\auto}[2]{\left[\,#1\mid #2\,\right]}
\DeclareMathOperator{\Aut}{\mathrm{Aut}}
\DeclareMathOperator{\Unb}{U}
\DeclareMathOperator{\Tol}{Tol}
\def\Sym{\Sigma}
\DeclareMathOperator{\im}{im}
\DeclareMathOperator{\HammingSimilarity}{H^\perp}
\DeclareMathOperator{\dist}{dist}
\DeclareMathOperator{\Hamming}{H}
\title{Almost Orthogonal Arrays:\\Search Three Ways}
\author[1]{Luis~Martínez\thanks{E-mail: {\tt luis.martinez@ehu.eus}.}}
\author[1,2]{María~Merino\thanks{E-mail: {\tt maria.merino@ehu.eus}.}}
\author[1,3]{Juan~Manuel~Montoya\thanks{E-mail: {\tt jmontoya006@ikasle.ehu.eus}.}}
\author[4]{Josué~Tonelli-Cueto\thanks{Corresponding author: {\tt  josue.tonelli.cueto@bizkaia.eu}.}}
\affil[1]{Dept. of Mathematics\\ 
Univ. of the Basque Country (EHU)\\ 
Leioa, SPAIN}
\affil[2]{Basque Center for Applied Mathematics (BCAM), Bilbao,  SPAIN} 
\affil[3]{
University of Valle, Faculty of Natural and Exact Sciences, Cali, COLOMBIA}
\affil[4]{Dept. of Quantitative Methods\\ 
CUNEF Universidad\\
Madrid, SPAIN}
\date{}
\def\th@plain{%
  \thm@notefont{}
  \slshape 
}
\def\th@definition{%
  \thm@notefont{}
  \normalfont 
}
\theoremstyle{plain}
\newtheorem{lem}{Lemma}[section]
\newtheorem{prop}[lem]{Proposition}
\newtheorem{thm}[lem]{Theorem}
\newtheorem*{theo*}{Theorem}
\newtheorem*{temptheo*}{Template Theorem}
\newtheorem{cor}[lem]{Corollary}
\newtheorem{construct}[lem]{Construction}
\theoremstyle{definition}
\newtheorem{dfn}[lem]{Definition}
\theoremstyle{remark}
\newtheorem{ex}[lem]{Example}
\newtheorem{remark}[lem]{Remark}
\let\oldnl\nl
\newcommand{\nonl}{\renewcommand{\nl}{\let\nl\oldnl}}
\newcounter{pubnumber}
\begin{document}
\maketitle
\begin{abstract}
Orthogonal arrays play a fundamental role in many applications. However, constructing orthogonal arrays with the required parameters for an application usually is extremely difficult and, sometimes, even impossible. Hence there is an increasing need for a relaxation of orthogonal arrays to allow a wider flexibility. The latter has lead to various types of arrays under the name of ``nearly-orthogonal arrays'', and less often ``almost orthogonal arrays''. In this paper, we explore how to find almost orthogonal arrays three ways: using integer programming, local search meta-heuristics and algebraic methods. We compare all our search results with the ones existing in the literature, and we show that they are competitive, improving some of the existing arrays for many non-orthogonality measures. All our found almost orthogonal arrays are available at a public repository.
\end{abstract}

\noindent\textbf{MSC Codes:} 05B15, 62K15, 05-08; 90C10, 90C59, 05E14\\
\noindent\textbf{keyword:} orthogonal arrays, non-orthogonality measures, discrepancy measures, integer programming, heuristics, finite algebraic geometry

\section{Introduction}

Orthogonal arrays of strength $t$ and index $\lambda$ are arrays of symbols from a finite alphabet such that, in each set of $t$ columns, every $t$-tuple of symbols appears exactly $\lambda$ times. These arrays are a central object of mathematics \cite{heslst1999}. Furthermore, they play a fundamental role in many applications, such as design of experiments \cite{bj2007,wj1995} (cf.~\cite[Ch.~11]{heslst1999} and \cite[Ch.~11]{hinkelmankempthorne2008}), error-correcting codes \cite[Ch.~4]{heslst1999} and derandomization of algorithms~\cite{gopalakrishnan2006applications,owen1992}, among many others. See a recent review in \cite{lin2025orthogonal}.

Although the importance of orthogonal arrays is enormous, it is extremely difficult to construct orthogonal arrays with given parameters and even to determine if they exist. Their construction requires many mathematical tools: Hadamard matrices, finite fields, Latin squares, integer programming, among others. Despite all these tools available today, the range of orthogonal arrays at our disposal is insufficient to meet all practical needs for designing experiments and other applications. Moreover, the `brute force' search for orthogonal arrays has a high combinatorial complexity. 

This practical need for more arrays has motivated the relaxation of the notion of orthogonal array in several contexts. Under the name of ``nearly-orthogonal array''---and way less often ``almost orthogonal array''---a wide family of relaxations is available~\cite{booth1962,wu1992,lin1993,lin1995,denglinwang1999,fang2000,ma2000new,xu2002algorithm,yamadalin2002,yamadamatsui2002,UDpaper2,UDpaper13,UDpaper10,lu2006class,linphoakao2017}, having some of them usefulness for practical applications such as functional magnetic resonance imaging~\cite{linphoakao2017} and manufacturing and high-technology industries~\cite{xu2002algorithm}. Moreover, another family of relaxations emerged from the design of numerical integration schemes~\cite{hickernell1998,hickernell1998b,UDpaper8hickernellliu,UDpaper8liuhickernell,zhou2013,ke2015}.

In this paper, we explore, in this new setting, three ways to produce almost orthogonal arrays: a meta-heuristic local search algorithm and the two usual methods for producing orthogonal arrays, integer programming (IP)~\cite{rosenber1995,carlini2007,bulutoglu2008,vieira2011,geyer2014,geyer2019} and algebraic constructions~\cite[Ch.~3]{heslst1999}. We compare all the obtained arrays with all those existing in the literature, whenever this was possible\footnote{In this work, we consider the so-called fixed-level or unmixed arrays. Hence we can only compare against those, and not against the existing mixed-level almost orthogonal arrays.}.

Moreover, in our search for new almost orthogonal arrays, we perform extensive computation using IP and meta-heuristics local search. Additionally, we consider symmetric arrays as a tool to reduce the search space. Even though, the search for almost orthogonal arrays is more flexible, this search is still a challenging combinatorial optimization problem. In this way, by restricting to arrays with symmetries, we are able to bring the search methods to a computationally more tractable point. 

In our comparisons, we show that our methods are competitive. The arrays that we obtain are always comparable to the existing arrays in the literature, being better for a wide set of non-orthogonality measures. Moreover, all our arrays are available at a public repository\footnote{\url{https://github.com/tonellicueto/AOAs}} and we have optimality guarantees for some of our arrays.

\noindent
{\bf Organization.} 
In Section \ref{sec:concepts}, we introduce the measures of non-orthogonality that we use (\emph{tolerance} and \emph{unbalance}) as well as almost orthogonal arrays and our strategy to search for them. In Section~\ref{sec:concepts2}, we introduce symmetric arrays and showcase the symmetries we use to make our computations more tractable. Then, in Section~\ref{sec:overview}, we report, discuss and compare all our found arrays, leaving the detailed description of our search/construction methods for the latter sections: IP formulation, in Section \ref{sec:integerprogramming}; heuristic methods, in Section~\ref{sec:heuristic}; and algebraic constructions, in Section~\ref{sec:algebraic}.

Further, in Appendix~\ref{sec:unbalancebounds}, we provide some optimality results guaranteeing the optimality of some of our found arrays, and in Appendix~\ref{sec:comparative}, we do an extensive survey and comparison between all existing relaxations of orthogonal arrays and measures of non-orthogonality. Finally, proofs are delegated to the final appendices: proofs of Sections~\ref{sec:concepts} and~\ref{sec:concepts2} to Appendix~\ref{secproofA}, proofs of Section~\ref{sec:algebraic} to Appendix~\ref{secproofB} and proofs from Appendices~\ref{sec:unbalancebounds} and~\ref{sec:comparative} to Appendix~\ref{sec:unbalanceproofs}.

\noindent
{\bf Acknowledgments.} The authors thank for technical and human support provided by SGIker (UPV/EHU/ ERDF, EU) of the University of the Basque Country.

L.M. was supported by the University of the Basque Country  (EHU) and Basque Center of Applied Mathematics, grant US21/27. 

M.M. has been partially supported by Spanish Ministry of Science and Innovation through the 
grant PID2023-147410NB-100 funded by MICIU/AEI/ 10.13039/501100011033 and by ERDF/EU and BCAM Severo Ochoa accreditation CEX2021-001142-S; and by the Basque Government through the program BERC 2022-2025 and the project IT1494-22.

J.T.-C. did part of this work on a stay partially financed with funds from a 2023 AMS-Simons Travel Grant, while employed at Johns Hopkins University. J.T.-C. was also partially supported by a postdoctoral fellowship of the 2020 ``Interaction'' program of the \emph{Fondation Sciences Mathématiques de Paris}, by the ANR JCJC
GALOP (ANR-17-CE40-0009), the PGMO grant ALMA, and the PHC GRAPE, while at Inria Paris and the IMJ-PRG; and by start-up funds of Alperen A. Ergür and NSF CCF 2110075, while at the University of Texas at San Antonio. Finally, J.T.-C. is thankful to Jazz G. Suchen for suggestions regarding Theorems~\ref{thm:AKconstruction} and \ref{thm:AKconstructionB}.

\section{Almost-Orthogonal Arrays}\label{sec:concepts}

Recall that, due to the connection of orthogonal arrays (OA for short) with experimental designs, we will refer to the rows of an array as \emph{runs}, to its columns as \emph{factors} and to the symbols in its entries as \emph{levels}.

\begin{dfn} \cite[Ch.~1]{heslst1999}
An \emph{orthogonal array with $N$ runs, $k$ factors, $s$ levels and strength $t$}, or an \emph{$\OA(N,k,s,t)$} for short, is an $N\times k$ array $A\in S^{N\times k}$ with entries in $S$ of size $s$ such that whenever we delete $k-t$ columns of $A$ every $t$-tuple with entries in $S$ appears the same number of times, $N/s^t$. The \emph{index} of such an orthogonal array, denoted by $\lambda(A)$ or simply $\lambda$, is this last number.
\end{dfn}

\subsection{Measures of Non-Orthogonality: Tolerance and Unbalance}

We will introduce two measures of non-orthogonality for an array. We introduce first some notation. To parameterize the possible ways of choosing $t$ columns in an array with $k$, we consider the set
\begin{equation}\label{eq:tsubset}
T_{t,k}:=\{(j_1,\dots,j_t)\in [k]^t\,:\,1\leq j_1<\dots < j_t\leq k\}
\end{equation}
where $[k]$ denotes the set $\{1,2,\dots,k\}$. Note that $T_{t,k}$ is in bijection to $\binom{[k]}{t}$, the set of size $t$ subsets of $[k]$, and so $\#T_{t,k}=\binom{k}{t}$. Here, we need to count how many times $x\in S^t$ appears as a row in the $N\times t$ subarray of the array $A\in S^{N\times k}$ selected by $j\in T_{t,k}$, this count is given by
\begin{equation}\label{eq:countingnumber}
   n(A,x,j):=\#\{i\in [N]\,:\,\forall r\in [t],\,a_{i,j_r}=x_r\}. 
\end{equation}
More explicitly, $n(A,x,j)$ counts the number of times the $t$-tuple $x$ appears as a row in the subarray 
\begin{equation}
A[j]:=A[j_1,\ldots,j_t]:=\begin{pmatrix}
A_{1,j_1}&\cdots&A_{1,j_t}\\
\vdots&\ddots&\vdots\\
A_{N,j_1}&\cdots&A_{N,j_t}
\end{pmatrix}
\end{equation}
of $A$, obtained after selecting columns $j_1,\ldots,j_t$ of $A$. Now, the following characterization is immediate.

\begin{prop}\label{prop:OAchar1}
Let $A\in S^{N\times k}$ with $\#S=s$. Then $A$ is an $OA(N,k,s,t)$ if and only if for all $x\in S^t$ and all $j\in T_{t,k}$,
\begin{equation}
    n(A,x,j)=N/s^t.
\end{equation}
\end{prop}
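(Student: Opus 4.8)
The plan is to verify that the displayed condition is nothing more than a restatement of the defining property of an $\OA(N,k,s,t)$, so the proof amounts to matching the two formulations term by term. Since the statement is an ``if and only if'', I would in fact establish a single chain of equivalences rather than prove the two directions separately.

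First, I would observe that deleting $k-t$ columns from $A$ is the same as retaining $t$ of its columns, and that the retained columns are specified by a size-$t$ subset of $[k]$. As already noted, $T_{t,k}$ is in bijection with $\binom{[k]}{t}$, so letting $j$ range over $T_{t,k}$ enumerates each such selection of columns exactly once; the subarray obtained by this selection is precisely $A[j]$, whose rows are elements of $S^t$.

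Second, for a fixed $j\in T_{t,k}$, by definition $n(A,x,j)$ counts the rows of $A[j]$ equal to $x$. Hence the requirement that, in these $t$ columns, ``every $t$-tuple appears the same number of times, $N/s^t$'' translates verbatim into $n(A,x,j)=N/s^t$ for every $x\in S^t$. Running this equivalence over all $j\in T_{t,k}$ yields the claim.

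The only point that requires a word of care is why the common frequency must equal the specific constant $N/s^t$ rather than some other value: since each of the $N$ rows of $A[j]$ contributes to exactly one tuple $x\in S^t$, we have $\sum_{x\in S^t} n(A,x,j)=N$, and if all $s^t$ summands are equal their common value is forced to be $N/s^t$ (which in particular presupposes $s^t\mid N$). This normalization is the substance of what little ``obstacle'' there is; everything else is bookkeeping of the definitions of $T_{t,k}$, of $A[j]$, and of $n(A,x,j)$, which is why the statement is flagged as immediate.
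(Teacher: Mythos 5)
Your proof is correct and follows essentially the same route as the paper's: both unpack the definitions of $T_{t,k}$, $A[j]$ and $n(A,x,j)$, and both pin down the common frequency via the counting identity $\sum_{x\in S^t} n(A,x,j)=N$, which forces the equal values to be $N/s^t$. Your additional remark that this presupposes $s^t\mid N$ is a fine observation but does not change the argument.
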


In this way, to measure how far is an array from being orthogonal, we just need a measure of how far are the $n(A,x,j)-N/s^t$ from being zero. We do this by the means of the usual norms, obtaining the tolerance and the unbalance.

\begin{dfn}\label{def:unbal}
Let $A\in S^{N\times k}$ with $\#S=s$ and $t\in \bbN$. The \emph{tolerance of strength $t$ of $A$} is
\begin{equation}
    \Tol_{t}(A):=\max\left\{\vert n(A,x,j)-N/s^t\vert\,:\, x\in S^t,\,j\in T_{t,k}\right\},
\end{equation}
and, for $p\in[1,\infty)$, the \emph{$p$-unbalance of strength $t$ of $A$} is
\begin{equation}
    \Unb_{p,t}(A):=\sum_{x\in S^t}\sum_{j\in T_{t,k}} \vert n(A,x,j)-N/s^t\vert^p.
\end{equation}
When $p$ and/or $t$ are clear from context, we will omit them from the notation.
\end{dfn}

\begin{remark}
The notion of unbalance is not inherently new. In \cite{lu2006class} and \cite{UDpaper2}, their balance, $B(t)$, is equal to our $2$-unbalance up to a factor of $\binom{k}{t}$. In our opinion, `unbalance' is a better term, as $\Unb_{p,t}$ measures how non-orthogonal is an array. Further, there are many notions on the literature that turn out to be equivalent to our unbalance and tolerance. We discuss them in detail in Appendix~\ref{sec:comparative}.
\end{remark}

We note that our definitions above can be normalized to obtain the following measure:
\begin{equation}\label{eq:normunbtol}
    \widehat\Unb_{p,t}(A):=\begin{cases}
\Tol_t(A)&\text{if }p=\infty\\
\left(\frac{\Unb_{p,t}(A)}{s^t\binom{k}{t}}\right)^{\frac{1}{p}}&\text{if }p\in[1,\infty)
\end{cases}
\end{equation}
which is useful when comparing how orthogonal are arrays of different numbers or runs, factors or levels. Noticing that this is just a $p$-mean gives some standard inequalities.

\begin{prop}\label{prop:unbtolrelations}
Let $A\in S^{N\times k}$ be an $N\times k$ array with entries in a set $S$ of size $s$, $t\in \bbN$ and $p,q\in[1,\infty)$ such that $p\leq q$. Then
\begin{equation}
    \Tol_t(A)^p\leq \Unb_{p,t}(A)\leq s^t\binom{k}{t}\Tol_t(A)^p,
\end{equation}
and
\begin{equation}
   \Unb_{q,t}(A)^p\leq \Unb_{p,t}(A)^{q}\leq \left(s^t\binom{k}{t}\right)^{q-p}\Unb_{q,t}(A)^p.
\end{equation}
\end{prop}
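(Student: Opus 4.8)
### Proof Proposal

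The plan is to prove the four inequalities by reducing everything to elementary facts about finite sums of nonnegative reals, together with the counting identity $\#T_{t,k} = \binom{k}{t}$ established above. Throughout, write $d_{x,j} := |n(A,x,j) - N/s^t|$ for the absolute deviations, so that the total number of index pairs $(x,j)$ ranging over $S^t \times T_{t,k}$ is exactly $s^t \binom{k}{t}$. In this notation, $\Tol_t(A) = \max_{x,j} d_{x,j}$ and $\Unb_{p,t}(A) = \sum_{x,j} d_{x,j}^p$.

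First I would handle the tolerance--unbalance sandwich. The lower bound $\Tol_t(A)^p \leq \Unb_{p,t}(A)$ is immediate: the maximum deviation equals $d_{x^*,j^*}$ for some witnessing pair, and $d_{x^*,j^*}^p$ is a single nonnegative summand of $\Unb_{p,t}(A)$, hence is at most the whole sum. For the upper bound, each of the $s^t\binom{k}{t}$ summands satisfies $d_{x,j}^p \leq \left(\max_{x',j'} d_{x',j'}\right)^p = \Tol_t(A)^p$, and summing over all pairs gives $\Unb_{p,t}(A) \leq s^t\binom{k}{t}\,\Tol_t(A)^p$. This is just the standard comparison between the $\ell^\infty$ and $\ell^p$ norms of the deviation vector, scaled by the number of coordinates.

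Next I would treat the unbalance--unbalance comparison for $p \leq q$, which I view as the same $\ell^p$-versus-$\ell^q$ norm comparison applied to the vector $(d_{x,j})$ of length $M := s^t\binom{k}{t}$. Setting $\|v\|_r := \left(\sum_i |v_i|^r\right)^{1/r}$, the claim $\Unb_{q,t}(A)^p \leq \Unb_{p,t}(A)^q$ is exactly $\|d\|_q \leq \|d\|_p$ after raising to the power $pq$; this is the monotonicity of $\ell^r$-norms in $r$ (the $\ell^q$-norm is dominated by the $\ell^p$-norm when $q \geq p$), which follows from the elementary inequality $\left(\sum a_i^q\right)^{1/q} \leq \left(\sum a_i^p\right)^{1/p}$ for nonnegative $a_i$. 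The companion upper bound $\Unb_{p,t}(A)^q \leq M^{q-p}\,\Unb_{q,t}(A)^p$ is the reverse comparison $\|d\|_p \leq M^{1/p - 1/q}\|d\|_q$, which I would obtain from Hölder's inequality with exponents $q/p$ and its conjugate: writing $\sum d_{x,j}^p = \sum d_{x,j}^p \cdot 1$ and applying Hölder yields $\sum d_{x,j}^p \leq \left(\sum d_{x,j}^q\right)^{p/q} M^{1 - p/q}$, and raising to the power $q$ gives precisely the stated constant $M^{q-p}$.

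The main obstacle is not conceptual but bookkeeping: keeping the exponents consistent when the two inequalities are stated in the ``raised'' forms $\Unb_q^p$ and $\Unb_p^q$ rather than as clean norm inequalities. I would therefore prove the two norm comparisons first in the form $\|d\|_q \leq \|d\|_p$ and $\|d\|_p \leq M^{1/p-1/q}\|d\|_q$, then raise each to the appropriate power ($pq$, respectively) to recover the displayed constants, being careful that $M^{(1/p-1/q)\cdot pq} = M^{q-p}$. A degenerate edge case worth a sentence is $d \equiv 0$ (when $A$ is an OA), where all inequalities hold trivially since every term vanishes; and one should note the bounds are sharp, the left inequalities becoming equalities when a single deviation is nonzero and the right ones when all deviations are equal.
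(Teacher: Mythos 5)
Your proof is correct and takes essentially the same route as the paper: the paper normalizes $\Unb_{p,t}$ into the $p$-mean $\widehat\Unb_{p,t}(A)=\bigl(\Unb_{p,t}(A)/(s^t\tbinom{k}{t})\bigr)^{1/p}$ of the deviations $|n(A,x,j)-N/s^t|$ (with the tolerance as the $\infty$-mean) and invokes the standard monotonicity inequalities for $p$-means, which are precisely the max-versus-sum, norm-monotonicity and H\"older comparisons you spell out. Your version is simply a more self-contained unwinding of those ``usual inequalities,'' with the same constants arising from the count $s^t\tbinom{k}{t}$ of index pairs.
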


\subsection{AOAs and their Search through Minimization: MTP and MUP}

By putting our focus on the tolerance, we define almost orthogonal arrays (AOA for short). However, this is not intended as a rigid notion.

\begin{dfn}\label{def:aoa} 
An \emph{almost-orthogonal array with $N$ runs, $k$ factors, $s$ levels, strength $t$ and tolerance $\epsilon$}, or \emph{$AOA(N,k,s,t,\epsilon)$} for short, is an $N\times k$ array $A\in S^{N\times k}$ with entries in $S$ of size $s$ such that
\[
\Tol_t(A)\leq\epsilon.
\]
The \emph{index} of such an almost-orthogonal array $A$, denoted by $\lambda(A)$ or simply $\lambda$, is $N/s^t$. 
\end{dfn}
\begin{remark}
In \cite{linphoakao2017}, the term almost-orthogonal array was introduced. In Appendix~\ref{sec:comparative}, and Proposition~\ref{prop:bwvstol} therein, we show that both notions are equivalent.
\end{remark}

As we have several non-orthogonality measures, we can exploit them to either find AOAs by considering the minimization problems for the tolerance and the unbalance. In this way, we obtain the \emph{Minimum Tolerance Problem} (MTP) given by
\begin{equation}
    \min\{\Tol_t(A)\mid A\in S^{N\times k}\}
\end{equation}
and the \emph{Minimum Unbalance Problem} (MUP) given by
\begin{equation}
    \min\{\Unb_{p,t}(A)\mid A\in S^{N\times k}\}
\end{equation}
depending on whether we minimize the tolerance or one of the unbalances. Both of these are combinatorial optimization problems that we handle using IP methods (Section~\ref{sec:integerprogramming}) and heuristic methods (Section~\ref{sec:heuristic}). Further, to reduce the search space we will be considering arrays with symmetries as described in Section~\ref{sec:concepts2}.

\subsection{Trivial construction of AOAs out of OAs for strength two} \label{sec:trivial}

The following proposition provides a ``trivial'' construction of an AOA out of an OA. The main idea is to repeat one of the factors of the OA. It gives us a baseline to compare our searches again.

\begin{prop}\label{prop:trivialconstruction}
Let $N,k,s\in\bbN$ with $k$ be the biggest integer such that an $OA(N,k-1,s,2)$ $B\in [s]^{N\times (k-1)}$ exists. Fix $\lambda=N/s^2$. Then the array
\[
A=\left(\begin{array}{c|c}  
    B_1 &B
\end{array}\right)\in [s]^{N\times k},
\]
where $B_1$ is the first factor of $B$, satisfies   
\begin{equation}\label{eq:defiepsbar}
    \Tol_{2}(A)\leq \lambda(s-1)=: \overline{\epsilon}
\end{equation}
and, for $p\in[1,\infty)$,
\begin{equation}\label{eq:defiUbar}
\Unb_{p,2}(A)=\lambda^p s(s-1)\left[1+(s-1)^{p-1}\right]=:\overline{U}.
\end{equation}
\end{prop}
\begin{remark}
Interestingly, when $\lambda=1$ and $s$ is a prime power, Corollary~\ref{cor:HamminSimilarityvsUnbalance2} guarantees that the trivial construction gives an optimal $2$-unbalance. However, it does not give neither an optimal tolerance nor 1-unbalance. 
\end{remark}
\begin{remark}
When repeating one of the factors, the obtained array is useless for factorial design because we cannot distinguish the effect of the repeated factor. 
\end{remark}

\section{Symmetries for searching faster AOAs}\label{sec:concepts2}

Searching for AOAs in the space of all arrays is computationally expensive. To reduce the search space, we can restrict our search to arrays that have symmetries. In this section, we recall the symmetries of an array and introduce the three families of symmetric arrays we consider. All relevant proofs can be found in the Appendix~\ref{secproofA}.

\subsection{Symmetries of an array} \label{subsec:threesymmetries}

We are not interested in symmetries up to equality of arrays, but in symmetries up to permutation of the rows. The reason for this is that in experimental design, the order of the runs---the order in which we run our experiments---is irrelevant (at least in principle).

\begin{dfn}
Given arrays $A,\tilde{A}\in S^{N\times k}$, we say that \emph{$A$ and $\tilde{A}$ are equivalent}, $A\cong \tilde{A}$, if there is a row permutation $P$ such that
\[
\tilde{A}=PA.
\]
In other word, $A\cong \tilde{A}$ if and only if the multisets of their rows are equal (meaning they have the same number of each rows counting with multiplicity).
\end{dfn}

Let
\[
\Sym(S,k):=\Sym(S)\times \Sym_k
\]
be the product of the group of permutations of $S$, $\Sym(S)$, and the group of permutations of $[k]:=\{1,\ldots,k\}$, $\Sym_k$. We will denote its elements by
\begin{equation}
    \auto{g}{\sigma}
\end{equation}
where $g\in \Sym(S)$ and $\sigma\in \Sym_k$. In this way, we will write\footnote{In what follows, recall that $(u_1,u_2,\ldots,u_c)$,
with the $u_i$ distinct, denotes the cyclic permutation sending $u_1$ to $u_2$, $u_2$ to $u_3$,\ldots,$u_{c-1}$ to $u_c$, and $u_c$ to $u_1$. } expression like $\auto{(1,2,3)}{(1,2)(3,4)}$ instead of $((1,2,3),(1,2)(3,4))$ for an element of $\Sym(\{1,2,3\},4)$. Now, $ \Sym(S,k)$ acts naturally on $S^{N\times k}$ as follows: given $\auto{g}{\sigma}\in \Sym(S,k)$ and $A\in S^{N\times k}$,
\begin{equation}
    \auto{g}{\sigma}A:=\begin{pmatrix}gA_{i,\sigma^{-1}(j)}\end{pmatrix}_{\substack{i\in [N]\\j\in[k]}},
\end{equation}
i.e., $g$ permutes the symbols in the array and $\sigma$ the columns\footnote{The $\sigma^{-1}$ in the subindex is needed to make $\sigma$ send the $i$th column to the place of the $\sigma(i)$th column, and to guarantee the axioms of group action.}.

Once we have introduced the equivalence of arrays and the group action on arrays, we are ready for the definition of symmetry---automorphism---of an array. 

\begin{dfn}
Given an array $A\in S^{N\times k}$, an \emph{automorphism (or symmetry) of $A$} is an element $\auto{g}{\sigma}\in\Sym(S,k)$ such that
\[
\auto{g}{\sigma}A\cong A.
\]
\emph{The automorphism group of $A$}, $\Aut(A)$, is the group of automorphisms of $A$, i.e., 
\begin{equation}
    \Aut(A):=\left\{\auto{g}{\sigma}\in\Sym(S,k): \auto{g}{\sigma}A\cong A\right\}.
\end{equation}
\end{dfn}
\begin{remark}
Note that \emph{an automorphism group} of $A$ means a subgroup of $\Aut(A)$. Therefore it should be clear that when we say that $A$ has $G\leq\Sym(S,k)$ as an automorphism group, we mean that $G\leq\Aut(A)$ allowing the inclusion to be strict.
\end{remark}
\begin{ex}
Consider the arrays
\[
A=\begin{pmatrix}
a&b&c\\
b&c&a\\
c&a&b
\end{pmatrix} \text{ and }\ B=\begin{pmatrix}
a&b&c&a\\
b&a&b&a
\end{pmatrix}.
\]

Regarding $A$, we can see that both $\auto{(a,b,c)}{\id}$ and $\auto{\id}{(1,2,3)}$ are automorphisms of $A$---in both cases, we need to permute the rows to see this. However, note that there are more automorphisms such as $\auto{(a,b)}{(1,2)}$, even though neither $\auto{(a,b)}{\id}$ nor $\auto{\id}{(1,2)}$ are so.

Regarding $B$, we can prove that $\Aut(B)=\{\auto{\id}{\id}\}$, and so that $B$ has no symmetries.
\end{ex}
\begin{ex}
Consider the following array
\[
A=\begin{pmatrix}1&1&2&2\\1&2&1&2\end{pmatrix}.
\]
We can see that $\auto{(1,2)}{(1,4)(2,3)}$ is a non-trivial symmetry of $A$. Moreover, it is the generator of the automorphism group of $A$.
\end{ex}

\begin{prop}\label{prop:unbsymmetries}
Let $A\in S^{N\times k}$ be an array, $t\in\bbN$ and $p\in[1,\infty)$. Then, for every $\auto{g}{\sigma}\in \Sigma(S,k)$,
\begin{equation}
\Unb_{p,t}(\auto{g}{\sigma}A)=\Unb_{p,t}(A)~\text{ and }~\Tol_t(\auto{g}{\sigma}A)=\Tol_t(A).
\end{equation}
In particular, the action of $\Sigma(S,k)$ on $S^{N\times k}$ preserves OAs and AOAs. 
\end{prop}

\subsection{Three families of symmetric arrays}\label{subsec:symmetries}

In our search for AOAs, we will consider symmetric arrays that lie in three big families: bi-cyclic, quasi- and semi-cyclic and Klein. IP methods (Section~\ref{sec:integerprogramming}) exploit the first two, and heuristic methods (Section~\ref{sec:heuristic}) the first and the third. We will comment briefly how these symmetries allow us to reduce the search space.

\subsubsection{Quasi- and semi-cyclic arrays} \label{sec:qc}

A semi-cyclic array is an array where an automorphism permuts cyclically a subset of the levels. This notion generalizes the well-known case of cyclic arrays in the literature~\cite{bosebush1952} \cite[Part~VI, Ch.~17]{colbourne2007}. The quasi-cyclic case was already exploited in~\cite{montoya2022}.

\begin{dfn}
A \emph{semi-cyclic array} is an array $A\in [s]^{N\times k}$ that has an automorphism, called \emph{the semi-cyclic automorphism of $A$}, of the form
\[
\auto{(a,\ldots,s)}{\id}
\]
where $a\in [s]\setminus \{s\}=\{1,\ldots,s-1\}$. We will say \emph{quasi-cyclic} instead of semi-cyclic when $a=2$, and \emph{cyclic} when $a=1$.
\end{dfn}

In general, when the semi-cyclic automorphism acts on the runs of the corresponding semi-cyclic array, we have orbits of size $1$, when all the levels of the run are in $\{1,\ldots,a-1\}$; and orbits of size $s-a+1$, otherwise. This allows one to compress the search space significantly, particularly in the case of quasi-cyclic arrays, where a quasi-cyclic array with $N$ runs, $k$ factors and $s$ levels can be encoded into an array with $\left\lceil N/(s-1)\right\rceil$ runs and $k$ factors.

\begin{ex}
Consider the quasi-cyclic array
\[
\begin{pmatrix}
1&1&1&1&1\\
1&1&2&3&2\\
1&1&3&2&3\\
1&3&2&1&1\\
1&2&3&1&1
\end{pmatrix}
\]
with quasi-cyclic automorphism $\auto{(2,3)}{\id}$, where the first run is fixed and the next two pairs are the orbits of the action on runs. Instead of encoding this full array, we can just give the automorphism and the following array
\[
\begin{pmatrix}
1&1&2&3&2\\
1&3&2&1&1
\end{pmatrix}
\]
that has a member of each orbit of the action on rows and where the fixed run is implicitly given by the number of runs of the encoded array.
\end{ex}

The above feature is explore in two ways. In the IP search (Section~\ref{sec:integerprogramming}), we exploit both the semi- and quasi-cyclic to identify variables of the IP through simple additional linear constraints. In the heuristic methods (Section \ref{sec:heuristic}), we exploit (only) quasi-cyclic arrays to encode a bigger array into a smaller one---so instead of searching inside a search space of size $s^{Nk}$, we search in one of size $s^{(N-\lambda)k/(s-1)}$.

\subsubsection{Klein arrays}  \label{sec:klein}

A Klein array is an array where we can permute non-cyclically the first four factors. A class of interest is that of semi-cyclic Klein arrays.

\begin{dfn}
A \emph{Klein array} is an array $A\in [s]^{N\times k}$ that has an automorphism, called \emph{the Klein automorphism of $A$}, of the form
\[
\auto{\id}{(1,2)(3,4)}.
\]
A \emph{semi-cyclic Klein array} is an array that is both semi-cyclic and Klein.
\end{dfn}

When acting on the runs of a Klein array, the Klein automorphism will have orbits of size one and two. This allows roughly to encode a Klein array into an array with half the number of runs. 

\begin{ex}
The array 
\[
\begin{pmatrix}
1&1&2&3&2\\
1&1&3&2&2\\
1&1&3&2&3\\
1&1&2&3&3\\
1&3&2&1&1\\
3&1&1&2&1\\
1&2&3&1&1\\
2&1&1&3&1
\end{pmatrix}
\]
is a Klein quasi-cyclic array. 
\end{ex}

The above feature is exploited in the search of AOAs using IP (Section~\ref{sec:integerprogramming}), by identifying variables of the IP through simple additional linear constraints. 

\subsubsection{Bi-cyclic array}\label{sec:bc}

A bi-cyclic array is an array admitting an automorphism that permutes cyclically at the same time the levels and the first columns of the array with some additional restrictions.

\begin{dfn}
A \emph{bi-cyclic array} is an array $A\in [s]^{N\times k}$ that has an automorphism, called \emph{the bi-cyclic automorphism of $A$}, of the form
\[
\auto{(1,\ldots,s)}{(1,\ldots,r)}
\]
where $r$ is a divisor of $s$ such that $r\leq k$.
\end{dfn}

\begin{remark}
In the heuristic methods, where this symmetry is exploited, the parameter $r$ will always be the maximum divisor of $s$ that does not exceed $k$. 
\end{remark}

In general, if a bi-cyclic array with $s$ levels has more factors than levels, then we have that its cyclic automorphism acts freely on the set of runs of the array. Because of this, the action of this automorphism on the set of rows produces orbits of size $s$. This fact allows us to encode a bi-cyclic array with $N$ runs and $k$ factors into an array with $N/s$ runs and $k$ factors. 

\begin{ex}
Consider the bi-cyclic array
\[
\begin{pmatrix}
1&1&2&3&2\\
3&2&2&1&3\\
3&1&3&2&1\\
1&3&2&1&1\\
3&2&1&2&2\\
2&1&3&3&3
\end{pmatrix}
\]
with bi-cyclic automorphism $\auto{(1,2,3)}{(1,2,3)}$, where the first three runs belong to one orbit and the second three runs to another. Instead of encoding this full array, we can just give the automorphism and the following array
\[
\begin{pmatrix}
1&1&2&3&2\\
1&3&2&1&1
\end{pmatrix}
\]
that has a member of each orbit of the action on rows.
\end{ex}

The above feature of bi-cyclic arrays is exploited in the heuristic methods (Section \ref{sec:heuristic}) to reduce the search space dramatically---instead of the search space of size $s^{Nk}$ of arrays without symmetries, we pass to the search space of size $s^{Nk/s}$.

\section{Search/Construction Results} \label{sec:overview}

In this section, we report the results of our search through IP (Section~\ref{sec:integerprogramming}) and heuristic methods (Section~\ref{sec:heuristic}) as well as our algebraic constructions (Section~\ref{sec:algebraic}). In general, we will focus on the MTP and MUP at strength $t=2$ for arrays with $N$ runs, $k$ factors and $s\leq 10$ levels such that:
\begin{enumerate}
\item[(1)] The index $\lambda=N/s^2$ is either $1$ or $2$.
\item[(2)] There is an $OA(N,k-1,s,2)$, but no known $OA(N,k,s,2)$ (see \cite[Ch.~6, III]{colbourne2007}).
\end{enumerate}
For these arrays, we will report the arrays which among the ones we found/constructed have the best $1$-unbalance in Tables~\ref{tb:l1p1} and~\ref{tb:l2p1}; the best $2$-unbalance in Tables~\ref{tb:l1p2} and~\ref{tb:l2p2}; and the best tolerance in Tables~\ref{tb:l1e1} and~\ref{tb:l2e1}. We discuss our results regarding the unbalance in Subsection~\ref{sub:u}, and those regarding the tolerance in Subsection~\ref{sub:e}. We specify the used notation in Table~\ref{tb:not}, where in all tables, we specify in bold the dominating values.

Additionally, in Subsection~\ref{sec:sl}, we compare the arrays that we found with respect to others found in the literature, such as those of \cite{ke2015}, \cite{UDrepository} and \cite{ma2000new}. However, as these other arrays have been built/found minimizing other non-orthogonality measures, we do the comparison with respect the tolerance and six other non-orthogonality measures.

First, we consider the $\mcD$-value introduced Wang and Wu~\cite[(4.4)]{wang1992nearly}. Being more concrete, we use $\mcD_f$, with $f(a)=a-\frac{s+1}{2}$, which we define precisely, as the definition is quite lengthy, in Definition~\ref{defi:Dvalue} in Appendix~\ref{sec:comparative}, where a full discussion on this non-orthogonality measure is done. Second, we consider the so-called \emph{$D_1$ and $D_2$ criteria} of Ma, Fang and Liski~\cite{ma2000new} which, for an $N\times k$-array $A$, can be defined as
\[
D_i(A)=\binom{k}{2}^{-1}\Unb_{i,2}(A).
\]
The original definition, as well as further comparisons and discussions, can be found around Definition~\ref{defi:D1D2} in Appendix~\ref{sec:comparative}.

Finally, we consider the three discrepancy measures: the centralized $L_2$-discrepancy, CD; the wrap-around $L_2$-discrepancy, WD; and the mixture $L_2$-discrepancy, MD. These three measures of non-orthogonality appear in the study of quadrature discrepancy for families of points and were introduced by Hickernell~\cite{hickernell1998,hickernell1998b} (CD and WD) and Zhou, Fang and Ning~\cite{zhou2013} (MD). For the sake of the flow of the text, we give their definitions in Definition~\ref{defi:CDWDMD} in Appendix~\ref{sec:comparative}, together with a discussion on the relationship of these discrepancy measures to the unbalance. We computed these measures in our experiments with the DiceDesign package \cite{dicedesign} from R software \cite{R}.

We report these comparative results in Tables~\ref{tab:udl1} and \ref{tab:udl2}. Moreover, in Table~\ref{tab:make}, we further report our comparative results against arrays with parameters beyond our initial consideration. We discuss these comparative results in Subsection~\ref{sec:sl}. We specify the notation used in Table~\ref{tb:not2}.

All the arrays, whose results we report, are available at the following repository:
\begin{center}
    \url{https://github.com/tonellicueto/AOAs}
\end{center}
In each of the tables mentioned above, we mark in boldface the best value attained for the array.

\begin{remark}
All the searches, either through IP (Section~\ref{sec:integerprogramming}) and heuristic methods (Section~\ref{sec:heuristic}), were conducted in the ARINA computational cluster from SGI/IZO-SGIker at the University of the Basque Country \cite{arina}. In total, we run each of our searches up to a maximum of 100Gb of memory and 72 hours of computation time. Because of this, the search has been limited to arrays with the number of runs, factors and levels that we have considered.
\end{remark}

\begin{remark}
We note that the algebraic constructions (Section~\ref{sec:algebraic}) produce arrays only for $s$ a prime power. Due to this, non-prime-power values of $s$ are missing from the tables regarding this methodology. However, this method, unlike the other two methods, produces infinite families of AOAs.
\end{remark}

\begin{table}[p]  \caption{Notation (Not.) for Tables~\ref{tb:l1p1}, \ref{tb:l1p2}, \ref{tb:l1e1}, \ref{tb:l2p1}, \ref{tb:l2p2},  and~\ref{tb:l2e1}} \label{tb:not}
\begin{tabularx}{\linewidth}{rXr}
Not. & Description & Ref. \\
\toprule
   $s$  & number of levels (symbols) & \\ 
   $k$   &number of factors (columns)& \\ \midrule
   $\overline{U}$ &  unbalance of the trivial construction \eqref{eq:defiUbar}& \ref{sec:trivial}\\ 
   $\overline{\epsilon}$ &  tolerance of the trivial construction \eqref{eq:defiepsbar}&  \ref{sec:trivial}\\ \midrule
   $\mathfrak u$ &  unbalance of the found array& Def.~\ref{def:unbal}\\ 
    $\mathfrak{e}$ &  tolerance of the found array& Def.~\ref{def:unbal}\\  \midrule\midrule
   st &  solving status of the IP& \ref{sec:integerprogramming}\\\midrule
  \quad $OP$  & the optimal solution has been found\\
  \quad $ML$  & the 100Gb memory limit has been exceeded \\
  \quad $TL$  & the 72 hours time limit has been exceeded  \\
  \quad $^i$  & an OA with less than $k$ factors considered as input,\\&design obtained with the DoE.base package \cite{doebase} from R  \cite{R}\\ 
  \quad $^j$  & the IP solution for alternative parameters configuration\\&  fully or partially considered as input\\      \midrule\midrule
   $G$  & the automorphism group & \ref{subsec:symmetries}\\\midrule
   $\auto{1}{\rr}$ & bi-cyclic automorphisms $\auto{(1,2,\ldots,s)}{(1,2,\cdots, r)}$ & \ref{sec:bc}\\
   $\auto{a}{\cdot}$& quasi- and semi-cyclic automorphisms $\auto{(a,a+1,\ldots,s)}{\cdot}$ & \ref{sec:qc}\\
   $\auto{\cdot}{\g}$& Klein automorphism $\auto{\cdot}{(1,2)(3,4)}$ & \ref{sec:klein}\\ \bottomrule
   \end{tabularx}
\bigskip
\caption{Notation (Not.) for Tables~\ref{tab:make}, ~\ref{tab:udl1} and \ref{tab:udl2}} \label{tb:not2}
\begin{tabularx}{\linewidth}{rXr}
Not. & Description & Ref. \\
\toprule
$s$  & number of levels (symbols) & \\ 
$k$   &number of factors (columns)& \\ \midrule
$\mathfrak{e}$ &  tolerance of the found array& Def.~\ref{def:unbal}\\
$\mcD_f$ &  $\mcD$-value with respect $f$& Def.~\ref{defi:Dvalue}\\
$D_1$ & $D_1$-criterion & Def.~\ref{defi:D1D2}\\
$D_2$ &  $D_2$-criterion& Def.~\ref{defi:D1D2}\\
$CD$ &  centralized $L_2$-discrepancy& Def.~\ref{defi:CDWDMD}\\
$WD$ &  wrap-around $L_2$-discrepancy& Def.~\ref{defi:CDWDMD}\\
$MD$ &  mixture $L_2$-discrepancy& Def.~\ref{defi:CDWDMD}\\\midrule\midrule
Met. &   the methodology or source used for obtaining the results \\\midrule
IP & the Integer Programming model &  \ref{sec:integerprogramming}\\
BC & the two-stage local search   heuristic for bi-cyclic AOAs (TSBC)& \ref{sec:heuristic}\\
QC & the two-stage local search   heuristic for quasi-cyclic AOAs (TSQC)& \ref{sec:heuristic}\\
AC & the algebraic construction &  \ref{sec:algebraic} \\ \midrule
Ke & array obtained from~\cite{ke2015} &   \\ 
Ma & array obtained from~\cite{ma2000new} &  \\ 
CD & array obtained from~\cite{UDrepository} optimal with respect CD &  \\ 
WD & array obtained from~\cite{UDrepository} optimal with respect WD&  \\ 
\bottomrule
\end{tabularx}
\end{table}

\begin{table}[tp]
	\centering
	\caption{Minimum unbalance results for $\lambda=1$ and $p=1$ } \label{tb:l1p1}
	\begin{tabular}{rrr rlrr rrrH rr}
		\toprule
		Instance & \multicolumn{2}{c}{Bounds} & \multicolumn{4}{c}{Integer Programming} & 
		\multicolumn{4}{c}{Heuristic} &  \multicolumn{2}{c}{AC}   \\ \cmidrule(l){2-3}\cmidrule(l){4-7} \cmidrule(l){8-11} \cmidrule(l){12-13} 
		$(s,k)$ & $\overline{U}$ & $\overline{\epsilon}$ & $\mathfrak u$ & st & $\mathfrak{e}$ & $G$ &   $\mathfrak u$ & $\mathfrak{e}$ & $G$ & met.  & $\mathfrak u$ & $\mathfrak{e}$ \\
		\midrule
		$(2,4)$ & {\bf 4} & {\bf 1}& {\bf 4} & OP & {\bf 1} & $\auto{\id}{\g}$ & {\bf 4}& {\bf 1} & $\auto{1}{2}$ & BC & {\bf 4} & {\bf 1}\\ 
		$(3,5)$ &  {\bf 12} & 2& {\bf 12} & OP & { 2} & $\auto{\id}{\g}$  & {\bf 12} & { 2} &$\auto{1}{3}$ & BC & 18 & {\bf 1}\\ 
		$(4,6)$ &  {\bf 24} & 3& {\bf 24} & OP & { 3} & $\auto{\id}{\g}$  & {\bf 24} & { 3} & $\auto{2}{\id}$ & QC & 48 & {\bf 1}\\ 
		$(5,7)$ &  {\bf 40} & 4& {\bf 40} & OP & { 4} & $\auto{2}{\id}$  & {\bf 40} & { 4} &$\auto{1}{5}$ & BC & 100 & {\bf 1}\\  		
  $(6,4)$ & 60 & 5& {\bf 4} & OP & {\bf 1} & $\auto{3}{\id}$  &  12 &  {\bf 1}  & $\auto{1}{3}$ & BC & $-$ & $-$\\ 
		$(7, 9)$ &	 {\bf 84} & 6& {\bf 84} & TL & { 6} & $\auto{2}{\id}$   & {\bf 84} & { 6} &$\auto{2}{\id}$  & QC & 294 & {\bf 1}\\	
		$(8, 10)$ &	 {\bf 112} &7& {\bf 112} & OP & { 7} & $\auto{2}{\id}$  & {\bf 112} & { 7} & $\auto{2}{\id}$ & QC & 448 & {\bf 1}\\
		$(9, 11)$ &	{\bf 144} &8& {\bf 144} &  OP$^i$ & { 8} & $\auto{\id}{\id}$ &  976 & 8 & $\auto{2}{\id}$ & QC & 648 & {\bf 1}\\	
		$(10, 5)$	& 180 &9& {\bf 36}  & TL & {\bf 1} & $\auto{2}{\g}$  & {\bf 36} & {\bf 1} & $\auto{2}{\id}$ & QC & $-$  & $-$\\
		\bottomrule
	\end{tabular}
 \bigskip

 \caption{Minimum unbalance results for $\lambda=1$ and $p=2$ } \label{tb:l1p2}
\begin{tabular}{rrr rlrr rrrH rr}
		\toprule
  Instance & \multicolumn{2}{c}{Bounds} & \multicolumn{4}{c}{Integer Programming} & 
		\multicolumn{4}{c}{Heuristic} &  \multicolumn{2}{c}{AC}   \\ \cmidrule(l){2-3}\cmidrule(l){4-7} \cmidrule(l){8-11} \cmidrule(l){12-13} 
		$(s,k)$ & $\overline{U}$ & $\overline{\epsilon}$ & $\mathfrak u$ & st & $\mathfrak{e}$ & $G$ &   $\mathfrak u$ & $\mathfrak{e}$ & $G$ & met.  & $\mathfrak u$ & $\mathfrak{e}$ \\
		\midrule
		$(2,4)$ & {\bf 4}& {\bf 1}& {\bf 4}&  OP&{\bf  1}& $\auto{\id}{\g}$ & {\bf 4} & {\bf 1} &$\auto{1}{2}$ & BC & {\bf 4} & {\bf 1} \\ 
		$(3,5)$ & {\bf 18}&2& {\bf 18} & OP & {\bf 1} & $\auto{\id}{\g}$   & {\bf 18} & {\bf 1} &$\auto{1}{3}$ & BC & {\bf 18} & {\bf 1}\\ 
		$(4,6)$ & {\bf 48}  &3& {\bf 48}  & OP & {\bf 1} & $\auto{2}{\id}$   & {\bf 48} & 3 &$\auto{2}{\id}$ & QC & {\bf 48} & {\bf 1}\\ 
		$(5,7)$ & {\bf 100} &4& {\bf 100}  & OP & 4 & $\auto{2}{\id}$   & {\bf 100} & {\bf 1} & $\auto{1}{5}$& BC & {\bf 100} & {\bf 1}\\ 
		$(6,4)$ & 180 & 5&{\bf 4} &  OP & {\bf 1} & $\auto{3}{\id}$ & 12 & {\bf 1} & $\auto{1}{3}$& BC & $-$ & $-$\\ 
		$(7, 9)$ &	{\bf 294} &6&  {\bf 294} & OP$^j$  & { 6} & $\auto{2}{\id}$ & {\bf 294} & { 6} & $\auto{2}{\id}$& QC & {\bf 294} & {\bf 1}\\	
		$(8, 10)$ &	 {\bf 448}  &7 & {\bf 448}  & OP$^j$ & { 7} & $\auto{2}{\id}$  & {\bf 448} & { 7} & $\auto{2}{\id}$& QC& {\bf 448}& {\bf 1}\\
		$(9, 11)$ &	{\bf 648} & 8&{\bf 648}  & ML$^i$ & { 4} & $\auto{\id}{\id}$ &  1144 & 8 & $\auto{2}{\id}$& QC & {\bf 648} & {\bf 1}\\	
		$(10, 5)$	&900 & 9&{\bf 36}  & OP$^i$ & {\bf 1} & $\auto{2}{\g}$  & {\bf 36} & {\bf 1} & $\auto{2}{\id}$& QC & $-$ & $-$\\
		\bottomrule
	\end{tabular}

 \bigskip
 
	\caption{Unbalance results for $\lambda=1$ and minimum  tolerance ($\mathfrak{e}=1$)} \label{tb:l1e1}
	\begin{tabular}{r rlHlH rHrHr}
		\toprule
		Instance & 
		\multicolumn{5}{c}{Integer Programming} & \multicolumn{4}{c}{Heuristic} & AC\\ \cmidrule(l){2-6} \cmidrule(l){7-10} \cmidrule(l){11-11} 		$(s,k)$ &   $\mathfrak u$ & st & $\mathfrak{e}$ & $G$ & met. & $\mathfrak u$ &  $\mathfrak{e}$ & $G$ & met.  &   $\mathfrak u$  \\
		\midrule
		$(2,4)$ &  {\bf 4}  & OP & 1 & $\auto{\id}{\g}$ & IP & {\bf 4} & 1 & $\auto{1}{2}$ & BC & {\bf 4}\\ 
		$(3,5)$ & {\bf 18}  & OP & 1 & $\auto{\id}{\id}$ & IP  & {\bf 18} & 1 & $\auto{1}{3}$ & BC & {\bf 18}\\ 
		$(4,6)$ & {\bf 48} & TL & 1 & $\auto{\id}{\id}$ & IP  & 60 & 1 & $\auto{2}{\id}$ & QC & {\bf 48}\\ 
		$(5,7)$ &  128 & TL & 1 & $\auto{\id}{\id}$ & IP  & {\bf 100} & 1 & $\auto{1}{5}$ & BC & {\bf 100}\\    
		$(6,4)$ &  {\bf 4}  & OP & 1 & $\auto{3}{\id}$ & IP  & 12 & 1 & $\auto{1}{3}$ & BC & $-$ \\    
		$(7, 9)$ &	 576 & TL & 1 & $\auto{2}{\id}$ & IP  & 336 & 1 & $\auto{2}{\id}$ & QC & {\bf 294}\\     
		$(8, 10)$ &	 1022 & TL & 1 & $\auto{2}{\id}$ & IP  & 504 & 1 & $\auto{2}{\id}$ & QC & {\bf 448}\\     
		$(9, 11)$ & 2272 & TL & 1 & $\auto{2}{\id}$ & IP  & 1216 & 1 & $\auto{2}{\id}$ & QC & {\bf 648}\\     
		$(10, 5)$	& {\bf 36} & TL & 1 & $\auto{2}{\g}$ & IP  & {\bf 36} & 1 & $\auto{2}{\id}$ & QC & $-$ \\     
		\bottomrule
\end{tabular}
\end{table}

\subsection{Results for the minimum unbalance} \label{sub:u}

Tables \ref{tb:l1p1} and \ref{tb:l1p2} show the unbalance results for index $\lambda=1$. For $p=1$, we can see that IP produces the best arrays, which improve significantly the trivial constructions when $s$ is not a prime power and match them otherwise. For $p=2$, IP and the algebraic construction---the latter only for $s$ a prime power---produce the best arrays. Although heuristic methods are not optimal, they turn out to be pretty competitive obtaining arrays comparable to those of IP in many cases. 

\begin{table}[tp]
	\centering
	\caption{Minimum unbalance results for $\lambda=2$ and $p=1$ } \label{tb:l2p1}
\begin{tabular}{rrr rlrr rrrH rr}
		\toprule
  Instance & \multicolumn{2}{c}{Bounds} & \multicolumn{4}{c}{Integer Programming} & 
		\multicolumn{4}{c}{Heuristic} &  \multicolumn{2}{c}{AC}   \\ \cmidrule(l){2-3}\cmidrule(l){4-7} \cmidrule(l){8-11} \cmidrule(l){12-13} 
		$(s,k)$ & $\overline{U}$ & $\overline{\epsilon}$ & $\mathfrak u$ & st & $\mathfrak{e}$ & $G$ &   $\mathfrak u$ & $\mathfrak{e}$ & $G$ & met.  & $\mathfrak u$ & $\mathfrak{e}$ \\
		\midrule
		$(2,8)$ & {\bf 8}&{\bf 2} & {\bf 8}& OP & {\bf 2} & $\auto{\id}{\g}$     & 24 & {\bf 2} &$\auto{1}{2}$ & BC& $-$& $-$\\  
		$(3,8)$ & 24 &4& { 18} &  OP & {\bf 1} &  $\auto{\id}{\id}$ & 24 & 4 & $\auto{1}{3}$& BC & {\bf 12} & 2\\ 
		$(4,10)$& 48 &6& {\bf 32} &   OP$^i$ & {\bf 2} &  $\auto{\id}{\id}$  & 156 & 3 & $\auto{2}{\id}$& QC & {\bf 32}& {\bf 2}\\ 
		$(5, 12)$ & 80 &8& {\bf 60} &  OP$^i$ & {\bf 3}  &  $\auto{\id}{\id}$ & 472 & 8 & $\auto{2}{\id}$& QC & {\bf 60} & {\bf 3}\\ 
		$(6, 8)$ &  120 &10& {\bf 48}&  OP$^j$ & {\bf 1} & $\auto{\id}{\id}$  & 140 & 5 &$\auto{2}{\id}$ & QC & $-$& $-$\\ 
		$(7, 16)$ & 168 & 12&{\bf 140} & ML$^i$ & {\bf 5} & $\auto{\id}{\id}$  & 2232 & 6 & $\auto{2}{\id}$& QC& {\bf 140} & {\bf 5}\\ 
		$(8, 18)$ &  224 &14&{\bf 192}  & TL$^i$ & {\bf 6} & $\auto{\id}{\id}$  & 4102 & 7 & $\auto{2}{\id}$& QC& {\bf 192}&{\bf 6} \\ 
		$(9, 20)$ &  {288} & 16&{1244} & TL$^i$ & 10 & $\auto{\id}{\id}$&  6896 & { 8} & $\auto{2}{\id}$& QC & {\bf 252} & {\bf 7} \\ 	
		$(10, 11)$&  {\bf 360} &  18&{3564}&  TL$^j$& 9 & $\auto{2}{\g}$ & { 1386} & {\bf 2} &$\auto{2}{\id}$ & QC& $-$& $-$\\ 
		\bottomrule
	\end{tabular}
\bigskip
 	\caption{Minimum unbalance results for $\lambda=2$ and $p=2$} \label{tb:l2p2}
\begin{tabular}{rrr rlrr rrrH rr}
		\toprule
  Instance & \multicolumn{2}{c}{Bounds} & \multicolumn{4}{c}{Integer Programming} & 
		\multicolumn{4}{c}{Heuristic} &  \multicolumn{2}{c}{AC}   \\ \cmidrule(l){2-3}\cmidrule(l){4-7} \cmidrule(l){8-11} \cmidrule(l){12-13} 
		$(s,k)$ & $\overline{U}$ & $\overline{\epsilon}$ & $\mathfrak u$ & st & $\mathfrak{e}$ & $G$ &   $\mathfrak u$ & $\mathfrak{e}$ & $G$ & met.  & $\mathfrak u$ & $\mathfrak{e}$ \\
		\midrule
		$(2,8)$ & {\bf 16} & 2&{\bf 16} &   OP & {\bf 1} &$\auto{\id}{\id}$&   48 & {\bf 1} & $\auto{1}{2}$& BC&  $-$& $-$\\ 
		$(3,8)$ &  72 & 4&{\bf 18} & TL  & {\bf 1}  & $\auto{\id}{\id}$&  54 & 2 & $\auto{1}{3}$& BC& {\bf 18} & 2\\ 
		$(4,10)$& 192 & 6&{\bf 64} & TL$^i$ & {\bf 2} & $\auto{\id}{\id}$  & 208 & {\bf 2} & $\auto{1}{4}$& BC& {\bf 64}&{\bf 2}\\ 
		$(5, 12)$ & 400 &8& {\bf 150} & ML$^i$ & 3 & $\auto{\id}{\id}$& 640 & {\bf 1} & $\auto{1}{5}$& BC& {\bf 150} & 3\\ 
		$(6, 8)$ & 720 & 10&{\bf 48}  & TL$^i$ & {\bf 1} & $\auto{\id}{\id}$  & 144 & {\bf 1} & $\auto{1}{6}$& BC & $-$ & $-$\\ 
		$(7, 16)$ & 1176 &12& {\bf 490} & OP$^j$ & {\bf 5} & $\auto{\id}{\id}$ & 2958 & 6 &$\auto{2}{\id}$ & QC & {\bf 490} & {\bf 5}\\ 
		$(8, 18)$ & 1792 & 14&{\bf 768}  & OP$^j$ & {\bf 6} & $\auto{\id}{\id}$ & 5362 & 7 &$\auto{2}{\id}$ & QC & {\bf 768} &  {\bf 6}\\ 
		$(9, 20)$ & 2592 &  16&{ 2102}  & TL$^j$ & {\bf 2} &$\auto{\id}{\id}$& 9072 & 8 &$\auto{2}{\id}$ & QC & {\bf 1134} & 7\\	
		$(10, 11)$& 3600 & 18&6668&  OP$^j$& 2 & $\auto{2}{\g}$ &  {\bf 1458} & {\bf 1} & $\auto{2}{\id}$& QC & $-$ & $-$\\ 
		\bottomrule
	\end{tabular}
 \bigskip

\caption{Unbalance results for $\lambda=2$ and minimum tolerance} \label{tb:l2e1} 
 \makebox[\textwidth][c]{
	\begin{tabular}{r rrlrlH rrrrH rrr}
		\toprule
		Instance & 
		\multicolumn{6}{c}{Integer Programming} & \multicolumn{5}{c}{Heuristic} & \multicolumn{3}{c}{AC}\\ \cmidrule(l){2-7} \cmidrule(l){8-12} 	 \cmidrule(l){13-15} 		$(s,k)$ &   $\mathfrak u_1$ & $\mathfrak u_2$ & st & $\mathfrak{e}$ & $G$ & met. & $\mathfrak u_1$ & $\mathfrak u_2$  &  $\mathfrak{e}$ & $G$ & met. & $\mathfrak u_1$ & $\mathfrak u_2$  &  $\mathfrak{e}$ \\
		\midrule
		$(2,8)$ & {\bf  16}  & {\bf 16} & OP & {\bf 1} & $\auto{\id}{\id}$ & IP & 48 & 48 & {\bf 1} & $\auto{1}{2}$ & BC & $-$& $-$& $-$\\ 
		$(3,8)$ & 18 & {\bf 18} & OP & {\bf 1} & $\auto{\id}{\id}$ & IP  & 66 & 66 & {\bf 1} & $\auto{1}{3}$ & BC & {\bf 12}& {\bf 18}& 2\\ 
		$(4,10)$ & 410 & 410 & TL &{\bf 1} & $\auto{1}{\id}$ & IP  & { 224} & { 224} & {\bf 1} & $\auto{1}{4}$ & BC &{\bf 32} &{\bf 64} & 2\\ 
		$(5,12)$ &  160 & 238 & OP$^i$ & 2 & $\auto{\id}{\id}$ & IP  & 640 & 640 & {\bf 1} & $\auto{1}{5}$ & BC & {\bf 60}&{\bf 150} & 3\\    
		$(6,8)$ &  {\bf 48} & {\bf 48} & TL & {\bf 1} & $\auto{\id}{\id}$ & IP  & 144 & 144 & {\bf 1} & $\auto{1}{6}$ & BC & $-$ & $-$ & $-$ \\    
		$(7, 16)$ &	 588 & 880 & TL$^i$ & {\bf 2} & $\auto{\id}{\id}$  & IP  & 2716 & 3094 & {\bf 2} & $\auto{1}{7}$ & BC &{\bf 140} &{\bf 490} & 5\\     
		$(8, 18)$ &	{ 908} & { 1366}&   TL$^i$ & {\bf 2} & $\auto{\id}{\id}$  & IP  & 4912 & 5592& {\bf 2} &  $\auto{1}{8}$ & BC & {\bf 192}&{\bf 768} & 6\\     
		$(9, 20)$ & 1408 & 2102 & TL$^i$ & {\bf 2} & $\auto{\id}{\id}$ & IP  & 8352 & 9540& {\bf 2} &  $\auto{1}{9}$& BC &{\bf 252} & {\bf 1134}& 7\\     
		$(10, 11)$	& 3892 & 6668& OP$^j$ & 2 & $\auto{2}{\g}$   & IP  & {\bf 1458} & {\bf 1458}& {\bf 1} & $\auto{2}{\id}$ & QC & $-$ & $-$ & $-$\\     
		\bottomrule
	\end{tabular}
 }
\end{table}

Tables~\ref{tb:l2p1} and~\ref{tb:l2p2} show unbalance results for index $\lambda=2$. For both $p=1$ and $p=2$, algebraic constructions are the dominating method when $s$ a prime power. In general, IP produces the best search results, but the heuristic method seems to have an advantage for $s=10$. However, as Table~\ref{tb:l2p2} suggests, this is due to the fact that the symmetry group used for the IP search is more restrictive.

Regarding $p=2$ and $s$ prime power, we can see that the results in Tables~\ref{tb:l1p2} and~\ref{tb:l2p2} are optimal due to Corollary~\ref{cor:HamminSimilarityvsUnbalance2}. In Table~\ref{tb:l1p2}, this has further interest as the arrays obtained by IP search have non-trivial symmetries. We note that Corollary~\ref{cor:HamminSimilarityvsUnbalance2} applies to the arrays obtained through algebraic construction beyond the values shown in the table.

\subsection{Results for the minimum tolerance} \label{sub:e}

Tables~\ref{tb:l1e1} and~\ref{tb:l2e1} show the results for the minimum tolerance. 

For the index $\lambda=1$, we have that that all methods produce arrays with the minimum tolerance: $\mathfrak{e}=1$. Because of this, Table~\ref{tb:l1e1} does not show the different tolerances. For $s$ a prime power, the algebraic construction produces the best unbalances dominating over the other two methodologies. Moreover, by Corollary~\ref{cor:HamminSimilarityvsUnbalance2}, these unbalances are optimal. For $s$ not a prime power, IP produces the best results.

For the index $\lambda=2$, the heuristic methodology obtains the best tolerance. Since the optimal tolerance is not always one, Table~\ref{tb:l2e1} indicates both the $1$-unbalance and $2$-unbalance of the best tolerance array. In this way, we can see how the best unbalances are not necessarily achieved by the best tolerance AOAs.

\begin{table}[tp]
	\centering
	\caption{Comparison results for instances in \cite{ke2015}, \cite{UDrepository} and \cite{ma2000new}} \label{tab:make}
	\begin{tabular}{rrrH HHHr rrr rrr Hl}
		\toprule
  $s$ & $k$ &  $\lambda$ & $p$ &  $\mathfrak u_1$ &  $\mathfrak{e}$  &   $\mathfrak u_2$ &  $\mathfrak{e}$ & $\mathcal{D}_f$ & $D_1$ & $D_2$ & $CD$ & $WD$ & $MD$ & $OA(1)$  & Met. \\
  \midrule
   3 & 8 & 1 & 1 & 68 & 2 & 72 & 2 & 0.8409 & 2.4286 & {\bf 2.5714} & {\bf 0.2242} & {\bf 1.5638} & {\bf 4.2253} & 0  & Ke \cite{ke2015}\\ 
   &  &  & 1 & 66 & 2 & 72 & 2 &0.0084& 2.3571 & {\bf 2.5714} & 0.2351 & {\bf 1.5638} & 4.2667 & 0  & CD \cite{UDrepository}\\ 
   &  &  & 1 & 66 & 2 & 72 & 2 &0.0084 & 2.3571 & {\bf 2.5714} & 0.2351 & {\bf 1.5638} & 4.2667 & 0  & WD \cite{UDrepository}\\ \cmidrule(l){4-16} 
       &  &  & 1 & 72 & 1 & 72 & {\bf 1} &  0.0056 & 2.5714 & {\bf 2.5714} & 0.2392 & {\bf 1.5638} & 4.2908 & 0  &IP\\ 
   &  &  & 1 & 48 & 2 & 72 & 2 & {\bf 0.8660} & {\bf 1.7143} & {\bf 2.5714} & 0.2266 & {\bf 1.5638} & 4.2282 & 0   &IP \\
   &  &  & 1 & 48 & 2 & 72 & 2  &0.0000 & {\bf 1.7143} & {\bf 2.5714} & 0.2443 & {\bf 1.5638} & 4.2936 & 0 &QC\\ 
  \midrule \midrule
  6 & 7 & 1 & 1 & 140 & 2 & 144 & 2 & 0.9775& 6.6667 & 6.8571 & 0.0419 & 0.2206 & 0.4674 & 0  &Ma \cite{ma2000new}\\ 
   &  &  & 1 & 276 & 2 & 286 & 2  &  {\bf 0.9987}& 13.1429 & 13.6190 & {\bf 0.0359} & 0.2247 & 0.4548 & 0& CD \cite{UDrepository}\\  \cmidrule(l){4-16} 
   &  &  & 1 & 100 & 1 & 100 & {\bf 1} &  0.9913& {\bf 4.7619} & {\bf 4.7619} & 0.0391 & {\bf 0.2154} & {\bf 0.4507} & 0 & IP\\  
   &  &  & 1 & 100 & 1 & 100 & {\bf 1} &0.9935& {\bf 4.7619} & {\bf 4.7619} & 0.0402 & 0.2209 & 0.4612 & 0  & QC\\ 
   \bottomrule
  \end{tabular}

  \bigskip

  \caption{Comparison results for web uniform designs \cite{UDrepository} and $\lambda=1$} \label{tab:udl1}
	\begin{tabular}{rrrH HHHr rrr rrr Hl}
		\toprule
  $s$ & $k$ &  $\lambda$ & $p$ &  $\mathfrak u_1$ &  $\mathfrak{e}$  &   $\mathfrak u_2$ &  $\mathfrak{e}$ &$\mathcal{D}_f$  & $D_1$ & $D_2$ & $CD$ & $WD$ & $MD$ & $OA(1)$ & Met. \\
  \midrule
   3 & 5 & 1 & 1 & 20 & 1 & 20 & {\bf 1} &{\bf 0.9747}& 2.0000 & 2.0000 & {\bf 0.0773} & 0.3412 & {\bf 0.5246} & 0  & CD \cite{UDrepository}\\
   &  & 1 & 1 & 16 & 2 & 18 & 2 & 0.8027& 1.6000 & {\bf 1.8000} & 0.0841 & {\bf 0.3386} & 0.5323 & 0 & WD \cite{UDrepository}\\  \cmidrule(l){3-16} 
   &  & 1 & 1 & 18 & 1 & 18 & {\bf 1} & 0.9642& { 1.8000} & {\bf 1.8000} & 0.0814 & {\bf 0.3386} & 0.5298 & 0 & IP\\ 
   &  & 1 & 1 & 12 & 2 & 18 & 2 & 0.9441  & {\bf 1.2000} & {\bf 1.8000} & 0.0817 & {\bf 0.3386} & 0.5316 & 0 & IP\\ 
    &  & 1 & 1 & 12 & 2 & 18 & 2 & 0.0000 & {\bf 1.2000} & {\bf 1.8000} & 0.0869 & {\bf 0.3386} & 0.5375 & 0&BC\\ 
       &  & 1 & 1 & 18 & 1 & 18 & {\bf 1} & 0.8706 & 1.8000 &  {\bf 1.8000} & 0.0841 & {\bf 0.3386} & 0.5341 & 0 &AC\\ 
  \midrule  \midrule
4 & 6 & 1 & 1 & 48 & 1 & 48 & {\bf 1} & {\bf  {1.0000}}& 3.2000 & {\bf 3.2000} & {\bf 0.0603} & 0.3243 & 0.5439 & 0   & CD \cite{UDrepository}\\ 
   &  & 1 & 1 & 48 & 1 & 48 & {\bf 1} & 0.9635& 3.2000 & {\bf 3.2000} & 0.0652 & {\bf 0.3021} & {\bf 0.5371} & 0 & WD \cite{UDrepository}\\  \cmidrule(l){3-16}
   &  & 1 & 1 & 48 & 1 & 48 & {\bf 1} & 0.9184& 3.2000 & {\bf 3.2000} & 0.0703 & 0.3129 & 0.5599 & 0  & IP\\ 
   &  & 1 & 1 & 24 & 3 & 48 & 3  & 0.9283& {\bf 1.6000} & {\bf 3.2000} & 0.0697 & 0.3145 & 0.5602 & 0 & IP\\ 
    &  & 1 & 1 & 24 & 3 & 48 & 3 & 0.0000& {\bf 1.6000} & {\bf 3.2000} & 0.0771 & 0.3147 & 0.5690 & 0 & QC\\
      &  & 1 & 1 & 48 & 1 & 48 & {\bf 1} & 0.8270 & 3.2000 &  {\bf 3.2000} & 0.0751 & 0.3143 & 0.5651 & 0 & AC\\ 
 \midrule \midrule
  5 & 7 & 1 & 1 & 180 & 2 & 186 & {\bf 2} &0.9956& 8.5714 & 8.8571 & {\bf 0.0516} & 0.3293 & {\bf 0.6845} & 0  & CD \cite{UDrepository}\\  \cmidrule(l){3-16}
 %
 %
   &  & 1 & 1 & 40 & 4 & 100 & 4  & 0.9754& {\bf 1.9048} & {\bf 4.7619} & 0.0583 & {\bf 0.3217} & 0.6952 & 0 & IP\\ 
   &  & 1 & 1 & 40 & 4 & 100 & 4 &{\bf {1.0000}}& {\bf 1.9048} & {\bf 4.7619} & 0.0582 & 0.3279 & 0.7070 & 0&   BC\\
       &  & 1 & 1 & 100 & 1 & 100 & {\bf 1} & 0.9686 & 4.7619 & {\bf 4.7619} & 0.0612 & 0.3259 & 0.7057 & 0 & AC\\ 
 \midrule \midrule
   6 & 4 & 1 & 1 & 16 & 1 & 16 & {\bf 1} &{\bf 0.9999}& 2.6667 & 2.6667 & {\bf 0.0124} & 0.0454 & {\bf 0.0567} & 0  & CD \cite{UDrepository}\\ \cmidrule(l){3-16}
   &  & 1 & 1 & 4 & 1 & 4 & {\bf 1} &{\bf 0.9999}& {\bf 0.6667} & {\bf 0.6667} & 0.0135 & {\bf 0.0452} & 0.0571 & 0 &  IP \\
   &  & 1 & 1 & 12 & 1 & 12 & {\bf 1} & 0.9948& 2.0000 & 2.0000 & 0.0131 & 0.0459 & 0.0578 & 0  &BC\\
      \bottomrule
  \end{tabular}
\end{table}

   \begin{table}[tp]
	\centering
	
  \end{table}

   \begin{table}[tp]
	\centering
	\caption{Comparison results for web uniform designs \cite{UDrepository} and $\lambda=2$} \label{tab:udl2}
    \begin{tabular}{rrrH HHHr rrr rrr Hl}
		\toprule
  $s$ & $k$ &  $\lambda$ & $p$ &  $\mathfrak u_1$ &  $\mathfrak{e}$  &   $\mathfrak u_2$ &  $\mathfrak{e}$ &$\mathcal{D}_f$ & $D_1$ & $D_2$ & $CD$ & $WD$ & $MD$ & $OA(1)$ & Met. \\
  \midrule \midrule
  3 & 8 & 2 & 1 & 16 & 2 & 18 & 2 & {\bf 0.9974}& 0.5714 & \bf{0.6429} & \bf{0.1492} & \bf{1.2517} &\bf{3.3878} & 0 &CD \cite{UDrepository}\\ 
   &  & 2 & 1 & 16 & 2 & 18 & 2 &0.9947& 0.5714 & \bf{0.6429} & 0.1522 & \bf{1.2517} & 3.3936 & 0  &WD \cite{UDrepository}\\   \cmidrule(l){3-16}
   &  & 2 & 1 & 18 & 1 & 18 & \bf{1} &0.9744 & {0.6429} & \bf{0.6429} & 0.1545 & \bf{1.2517} & 3.4052 & 0 &IP\\ 
    &  & 2 & 1 & 24 & 4 & 72 & 4  &0.9647& 0.8571 & 2.5714 & 0.1610 & 1.2962 & 3.5160 & 0  &BC\\ 
      &  & 2 & 1 & 66 & 1 & 66 & \bf{1} &0.9322& 2.3571 & 2.3571 & 0.1614 & 1.2891 & 3.5009 & 0  &BC\\
    &  & 2 & 1 & 12 & 2 & 18 & 2 & 0.9920 &\bf{0.4286} & \bf{0.6429} & 0.1532 & \bf{1.2517} & 3.3957 & 0 & AC \\ 
  \midrule \midrule
   4 & 10 & 2 & 1 & 272 & 2 & 310 & 2 & {\bf 0.9944}& 6.0444 & 6.8889 & \bf{0.1588} & 1.7177 & 5.8851 & 0   &CD \cite{UDrepository}\\ 
   &  & 2 & 1 & 206 & 2 & 210 & 2 &0.9850& 4.5778 & {4.6667} & 0.1693 & {1.6747} & {5.8821} & 0  &WD \cite{UDrepository}\\  \cmidrule(l){3-16}
   &  & 2 & 1 & 410 & 1 & 410 & \bf{1} &0.9255& 9.1111 & 9.1111 & 0.1907 & 1.7750 & 6.3064 & 0 &IP \\
 &  & 2 & 1 & 32 & 2 & 64 & 2  & 0.9906& {\bf 0.7111} & {\bf 1.4222} & 0.1625 & {1.6286} & {\bf 5.7027} & 0 & IP \\ 
  &  & 2 & 1 & 156 & 3 & 222 & 3 &0.8998& {3.4667} & 4.9333 & 0.2052 & 1.7113 & 6.1255 & 0  &QC\\ 
   &  & 2 & 1 & 224 & 1 & 224 & \bf{1} &0.9441& 4.9778 & 4.9778 & 0.1732 & 1.6993 & 5.9848 & 0  &BC\\ 
      &  & 2 & 1 & 32 & 2 & 64 & 2 & 0.9827 & \bf{0.7111} & \bf{1.4222} & 0.1668 & \bf{1.6260} & 5.7239 & 0 &AC\\ 
  \midrule \midrule
   5 & 12 & 2 & 1 & 944 & 3 & 1166 & 3 & {\bf 0.9965}& 14.3030 & 17.6667 & \bf{0.1516} & 2.5771 & 12.6809 & 0  &CD \cite{UDrepository}\\  \cmidrule(l){3-16}
   &  & 2 & 1 & 60 & 3 & 150 & 3&0.9958 & \bf{0.9091} & \bf{2.2727} & 0.1621 & \bf{2.3661} & {12.1654} & 0  &IP\\
      &  & 2 & 1 & 472 & 8 & 976 & 8 &0.9527& 7.1515 & 14.7879 & 0.2162 & 2.6099 & 13.6464 & 0 &QC\\ 
   &  & 2 & 1 & 640 & 1 & 640 & \bf{1} &0.9533& 9.6970 & 9.6970 & 0.1766 & 2.5171 & 13.0467 & 0   &BC\\ 
       &  & 2 & 1 & 60 & 3 & 150 & 3 & 0.9946 & \bf{0.9091} & \bf{2.2727} & 0.1601 & \bf{2.3661} & \bf{12.1637} & 0 &AC \\ 
  \midrule \midrule
   7 & 16 & 2 & 1 & 4268 & 4 & 5786 & \bf{4} &0.9978 & 35.5667 & 48.2167 & \bf{0.2008} & 6.7195 & 62.7258 & 0  &CD \cite{UDrepository}\\  \cmidrule(l){3-16}
   &  & 2 & 1 & 140 & 5 & 490 & 5 &0.9921& \bf{1.1667} & \bf{4.0833} & 0.2433 & \bf{6.1279} & \bf{59.9444} & 0  &IP\\
    &  & 2 & 1 & 2232 & 6 & 3396 & 6 &0.9536& 18.6000 & 28.3000 & 0.3330 & 6.8577 & 69.1272 & 0  &QC\\ 
      &  & 2 & 1 & 140 & 5 & 490 & 5 & \bf{0.9990} & \bf{1.1667} & \bf{4.0833} & 0.2305 & 6.1482 & 60.3086 & 0 &AC \\ 
      \bottomrule
  \end{tabular}
\end{table}

\begin{figure}[t]
\centering
\begin{tabular}{r c c}
&$\widehat\Unb_{1,2}$ & $\widehat\Unb_{2,2}$\\
\rotatebox[origin=c]{90}{$\lambda=1$}&\adjincludegraphics[valign=M,width=0.45\textwidth]{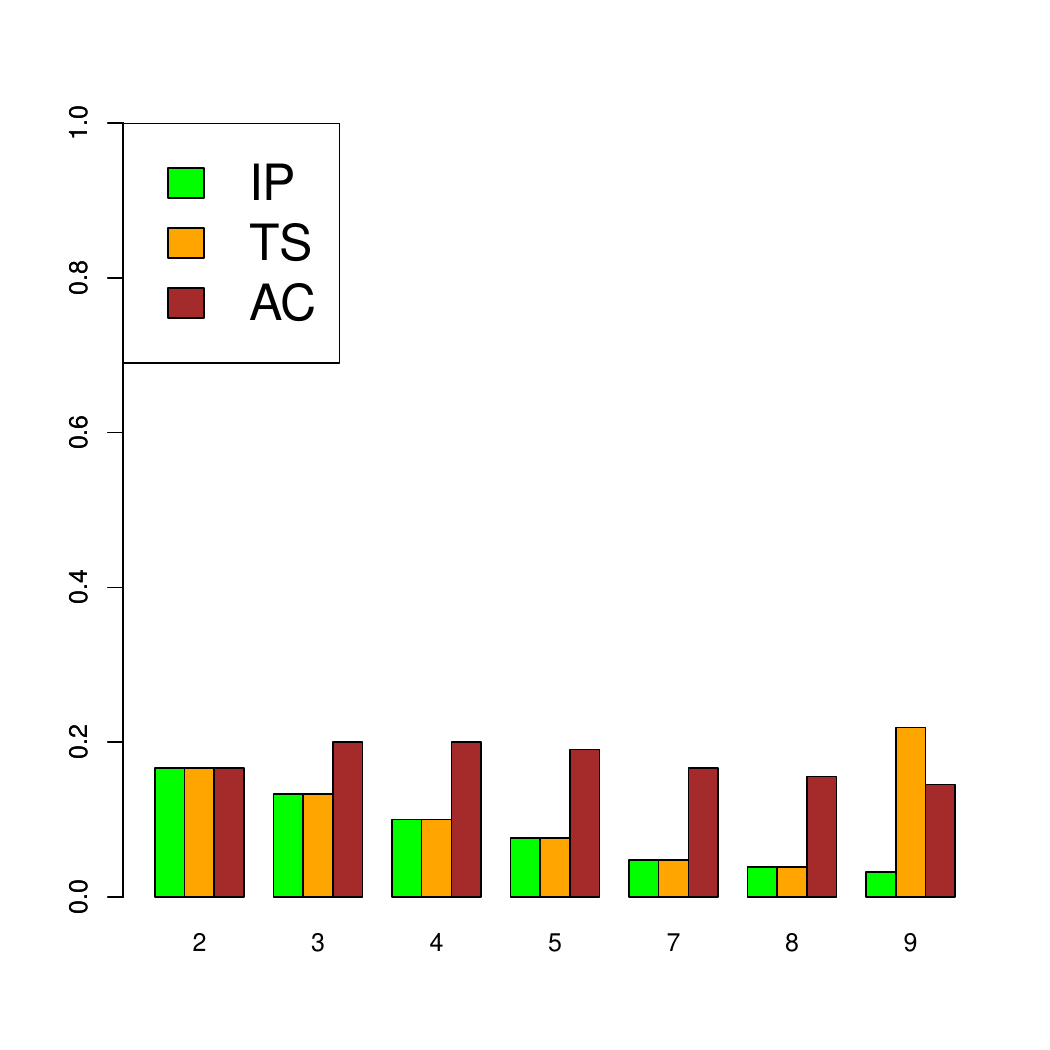} &\adjincludegraphics[valign=M,width=0.45\textwidth]{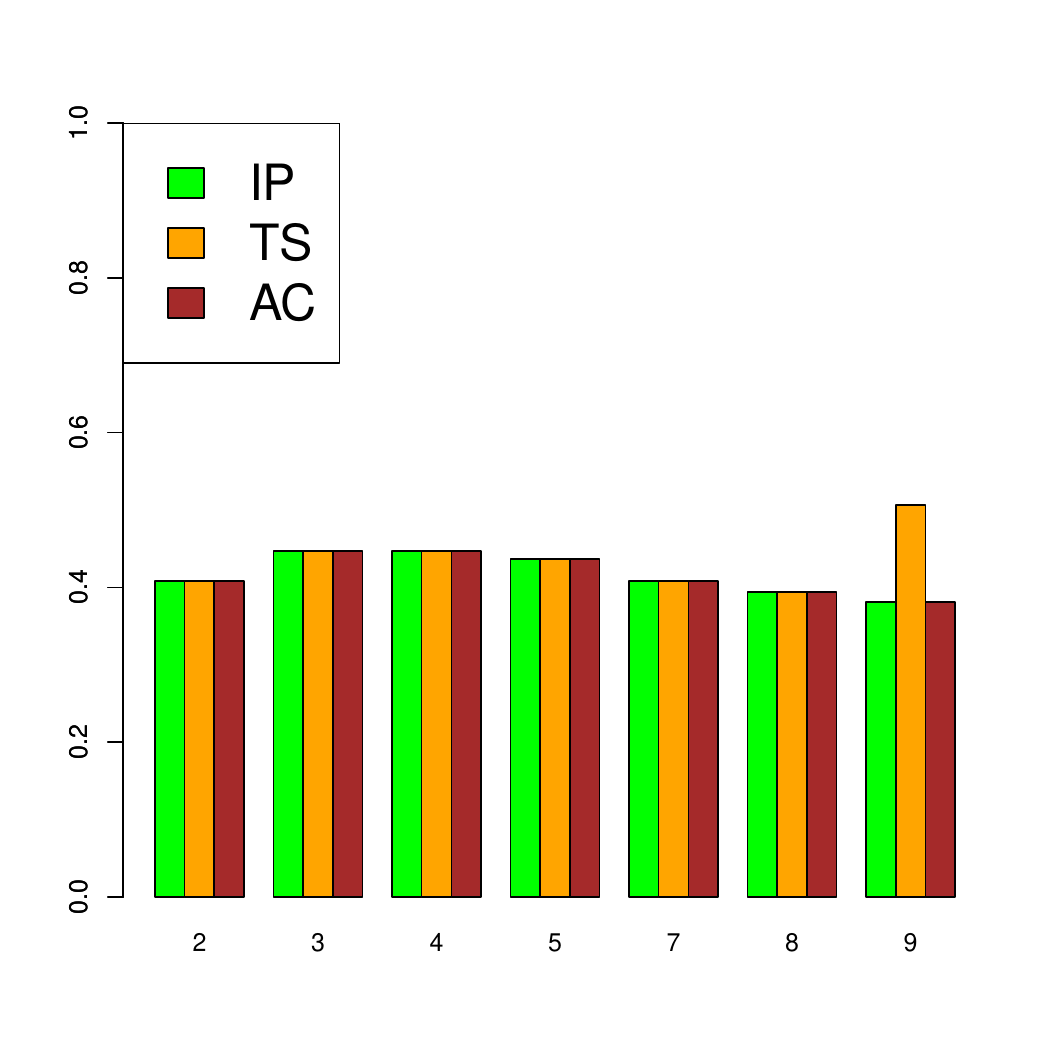}\\
\rotatebox[origin=c]{90}{$\lambda=2$}&\adjincludegraphics[valign=M,width=0.45\textwidth]{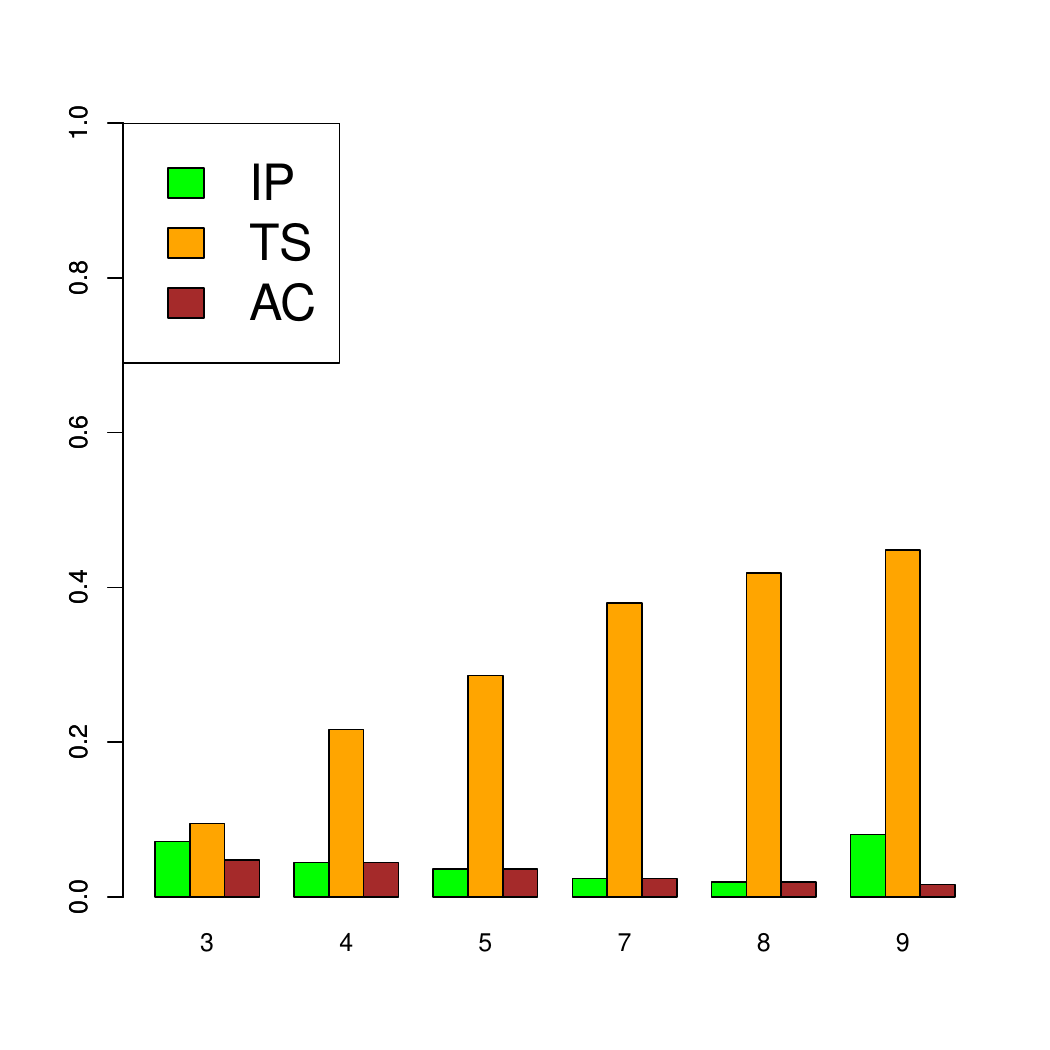} &\adjincludegraphics[valign=M,width=0.45\textwidth]{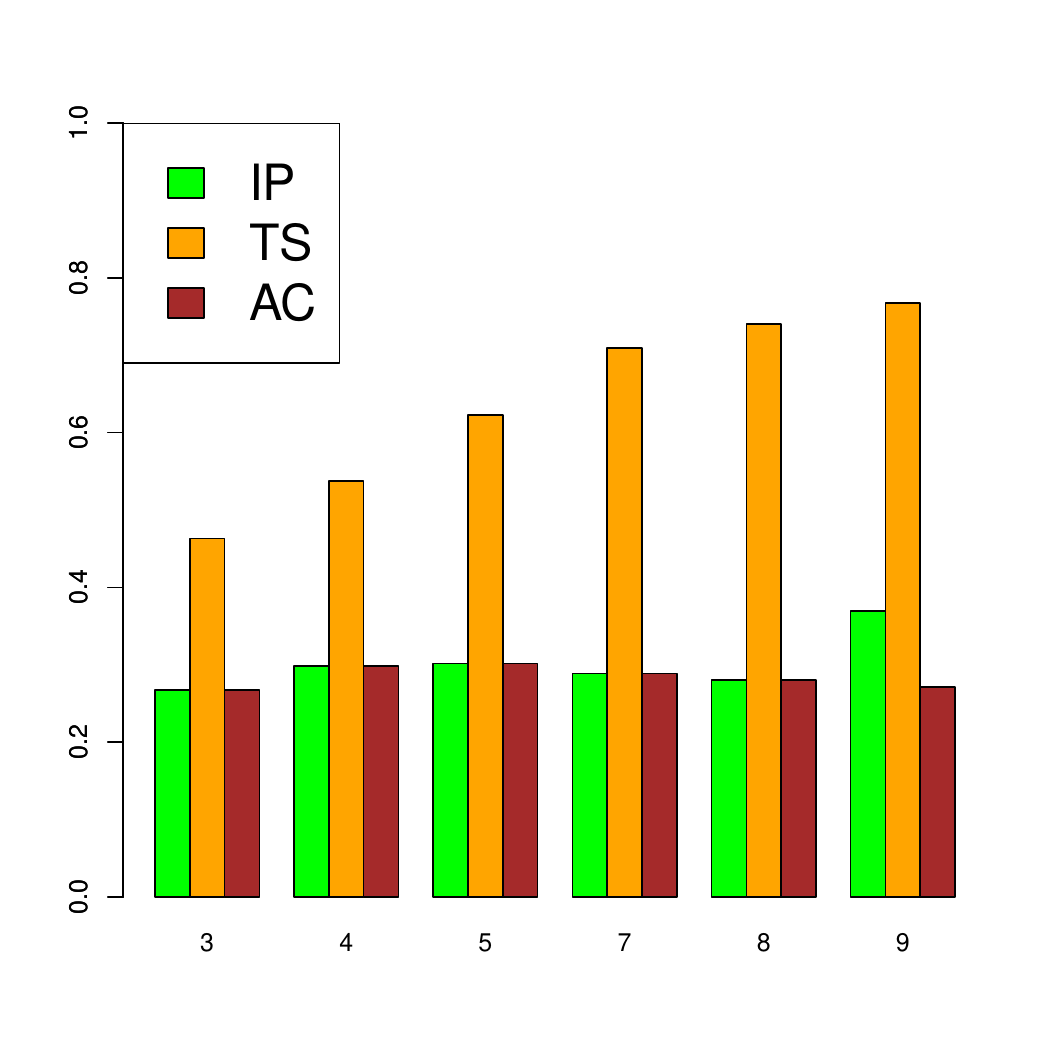}
 \end{tabular}
 
\caption{Best normalized unbalances $\widehat\Unb_{1,2}$ (left) and $\widehat\Unb_{2,2}$ (right, see~\eqref{eq:normunbtol}) by $\lambda=1$ (top) and $\lambda=2$ (bottom)}
    \label{fig:l1l2norm}
\end{figure}

\subsection{Comparison with scientific literature} \label{sec:sl}

In Tables~\ref{tab:make}, \ref{tab:udl1} and \ref{tab:udl2}, we show the comparison of our results with the ones in the literature. Tables \ref{tab:udl1} and \ref{tab:udl2} deal with the cases initially considered by us, while Table~\ref{tab:make} handles two cases not initially considered by us.

In general, our methods outperform the existing results for $D_1$ and $D_2$. This fact is not surprising, since $D_1$ and $D_2$ are just scaled version of the unbalance (see equation above for $D_1$ and $D_2$), which we are trying to optimize in our search for AOAs. Moreover, with the exception of $s=6$, Corollary~\ref{cor:HamminSimilarityvsUnbalance2} in Appendix~\ref{sec:unbalancebounds} guarantees that the obtained arrays do not only have the best $D_2$, but the optimal $D_2$.

Regarding the $\mcD$-value (Definition~\ref{defi:Dvalue}), which we consider for $f(a)=a-\frac{s+1}{2}$, and the discrepancy measures CD, WD and MD (Definition~\ref{defi:CDWDMD}), our methods (particularly IP and the algebraic constructions) produce results almost identical to those of the literature, even if they might not outperform them. For the discrepancy measures, Theorem~\ref{thm:discrepancyvsunbalance} together with Corollary~\ref{cor:discrepancyvsunbalance}, in Appendix~\ref{subsec:quadrature} provide an explanation of this phenomenon. Corollary~\ref{cor:discrepancyvsunbalance} guarantee that our arrays have optimal discrete discrepancy and Theorem~\ref{thm:discrepancyvsunbalance} shows how this optimal discrete discrepancy leads to good values of CD, WD and MD. Moreover, in the case of $s=3$ and WD, this fully justify why the arrays with best optimal $D_2$ have also optimal WD.
  
\subsection{Discussion} \label{sec:dis}

Figure \ref{fig:l1l2norm} shows the comparable best results obtained with the integer programming (green), heuristics (orange) and algebraic construction (red) for the normalized unbalance (see~\eqref{eq:normunbtol} for the precise formula) for $p=1$ (left) and $p=2$ (right), when $\lambda=1$ (top) and $\lambda=2$ (bottom), respectively. We use the normalized unbalance of~\eqref{eq:normunbtol} as it allows not only comparisons of the methods within the same parameters, but also across different choices of the parameters. 

In general, the IP and the algebraic construction (with the exception of $\lambda=1$ and $p=1$) produce the arrays with the best normalized unbalances. However, in terms of scalability, the algebraic constructions outperform the IP search; therefore, we consider that exploring new algebraic constructions might be a promising direction for the future. However, we should not ignore that the heuristic methodology produced the best tolerances for $\lambda=2$. In this sense, each methodology seems to have an advantage at hand depending on what we want to optimize when searching/constructing arrays.

\section{Integer Programming for Almost-Orthogonal Arrays} \label{sec:integerprogramming}

We present an integer programming (IP) formulation to construct AOAs. IP methods have been successfully applied to find OAs~\cite{rosenber1995,carlini2007,bulutoglu2008,vieira2011,geyer2014,geyer2019}, particularly using symmetry-exploiting techniques~\cite{margot2002, margot2003a, margot2003b, margot2007}. 
For classical references regarding IP, see \cite{luenberger,minoux, winston, wolsey1999} and references therein. 
Our formulation is implemented in IBM ILOG CPLEX 20.1 \cite{IBMcplex} with up to 8 threads.

\subsection{IP model}

We model the Minimum Unbalance Problem (MUP), whose optimal solution is an array $A \in S^{N \times k}$ forming an $AOA(N,k,s,t,\epsilon)$ with $N=\lambda s^t$, $k$ factors, $s$ levels, strength $t=2$, index $\lambda$, and tolerance $\epsilon$. We assume that the first $t$ columns contain all $\lambda s^t$ ordered combinations. The indices are the following ones:
\begin{itemize}
   \item $i \in \I=\{1,\ldots,N\}$: runs,
   \item $j \in \J=\{t+1,\ldots,k\}$: factors,
   \item $c \in \mcC=\{1,\ldots, \binom{k-t}{t}\}$: $t$-tuples of columns,
   \item $m \in \M=\{1,\ldots,s\}$: levels,
   \item $l \in \mcL=\{1,\ldots,s^t\}$: lexicographic combinations.
\end{itemize}
And the variables are the following ones:
\begin{itemize}
\item $x_{i,j}^m \in \{0,1\}$: equals 1 if entry $(i,j)$ takes value $m$;
\item $z_{i,c}^l \in \{0,1\}$: equals 1 if row $i$ in $t$-tuple $c$ corresponds to combination $l$;
\item $\delta_c^{0,l} \in \mathbb{Z}$: deviation of the frequency of combination $l$ in $c$ from $\lambda$;
\item $\delta^{1,m} \in \mathbb{Z}$: deviation from level balance ($\lambda s$ occurrences);
\item $\delta_{m',j}^{2,m}, \delta_{m',j}^{3,m} \in \mathbb{Z}$: deviations from pairwise balance constraints for column pairs $(1,j)$ and $(2,j)$.
\end{itemize}
Then, the mathematical modelling of MUP, with $\Oh(s^4 k^2)$ variables 
is as follows:  

\begin{subequations}\label{eq:AOA}
	\begin{align}
	  \min & \displaystyle \sum\limits_{c \in \mcC,  l \in \mcL} |\delta_{c}^{0,l}|^p + \sum\limits_{m \in \M} |\delta^{1,m}|^p  + \sum\limits_{m,m' \in \M,  j \in \J} |\delta_{m',j}^{2,m}|^p +  |\delta_{m',j}^{3,m}|^p  \label{eq:fo} \\
	s.t. ~ & \sum_{i \in \I} x_{i,j}^m = \lambda ~ s,  \quad  \forall j\in \J\setminus\{|\J|\},~ m \in \M \label{eq:aoa1}\\
  & \sum_{i\in \I} x_{i,|\J|}^m = \lambda ~ s + \delta^{1,m},  \quad \forall m \in \M \label{eq:aoa1k}\\
	& \sum_{m \in \M} x_{i,j}^m =1,  \quad \forall i \in \I,~ j\in \J \label{eq:aoa2}\\
	&  \sum_{\ell=1}^\lambda \sum_{i=(\ell-1)s^2+(m'-1)s+1}^{(\ell-1)s^2+m's} x_{i,j}^m =\lambda + \delta^{2,m}_{m',j},  \quad  \forall  j\in \J,~ m,m' \in \M \label{eq:aoa31}\\
	& \sum_{\ell=1}^\lambda \sum_{i\in \I:i\equiv(\ell-1)s^2+ m'\pmod s} x_{i,j}^m =\lambda + \delta^{3,m}_{m',j},  \quad \forall j \in \J,~ m,m' \in \M \label{eq:aoa32}\\
	& \sum_{l \in \mcL} l ~ z_{i,c}^l =s \sum_{m \in \M} m ~x_{i,j_1}^m + \sum_{m  \in \M} m ~x_{i,j_2}^m-s, \quad  \forall i \in \I,~c=(j_1,j_2) \in \mcC  \label{eq:aoaz1}\\	
	& \sum_{l \in \mcL} z_{i,c}^l =1,  \quad  \forall i \in \I,~c \in \mcC  \label{eq:aoaz2}\\	
	& \sum_{i \in \I} z_{i,c}^l =\lambda+\delta_{c}^{0,l}, \quad  \forall l \in \mcL,~c \in \mcC  \label{eq:aoaz3}\\		
	&  x_{i,j}^m, ~ z_{i,c}^l \in\{0,1\},   \quad \forall i \in \I,~ j\in \J,~c \in \mcC,~l\in \mcL,~m\in \M  \label{eq:aoavar} \\
& \delta_{c}^{0,l},\delta_{m',j}^{2,m},\delta_{m',j}^{3,m} \in \{\max(-\lambda,-\epsilon),\ldots,\epsilon\},      \quad \forall j\in \J,~c \in \mcC,~l\in \mcL,~m,m'\in \M \label{eq:aoaint}\\
& \delta^{1,m} \in \{-\lambda s,\ldots,\lambda s^t - \lambda s \},     \quad \forall m\in \M. \label{eq:aoaint1}
	\end{align}
\end{subequations} 
The objective function \eqref{eq:fo} minimizes the $p$-unbalance of $A$, where $$U_{p,2}(A)=\displaystyle \sum\limits_{c \in \mcC,  l \in \mcL} |\delta_{c}^{0,l}|^p + \sum\limits_{m,m' \in \M,  j \in \J} |\delta_{m',j}^{2,m}|^p +  |\delta_{m',j}^{3,m}|^p.$$ 
The variables $\delta_{c}^{0,l}$, $\delta_{m',j}^{2,m}$ and $\delta_{m',j}^{3,m}$ represent deviations $n(A,x,c)-\lambda$, where $x$ is the $l$-th combination and $c$ is the corresponding $t$-tuple column.
Constraints \eqref{eq:aoa1} impose level balance ($\lambda s$ occurrences per column), except for the last column, relaxed via $\delta^{1,m}$ in \eqref{eq:aoa1k}. Constraints \eqref{eq:aoa2} ensure a unique level per cell. Constraints \eqref{eq:aoa31}–\eqref{eq:aoa32} enforce pairwise balance for $(1,j)$ and $(2,j)$ up to deviations $\delta_{m',j}^{2,m}$ and $\delta_{m',j}^{3,m}$.
Constraints \eqref{eq:aoaz1}–\eqref{eq:aoaz2} assign exactly one combination of $S^t$ to each row and column pair, while \eqref{eq:aoaz3} bounds their frequencies by $\lambda+\epsilon$. Constraints \eqref{eq:aoavar}–\eqref{eq:aoaint1} define binary and integrality conditions.

Once solved, $A$ is recovered as $a_{i,j}=\sum_m m,x_{i,j}^m$. An optimal value of 0 yields an OA; otherwise, an optimal AOA or an upper bound on $\mathfrak{u}(\lambda s^2,k,2,p)$ is obtained. The MUP is linear for $p=1$ and quadratic for $p=2$.

\subsection{Exploiting symmetry} \label{sec:symip}

To reduce the search space, we restrict to symmetric arrays (Subsection~\ref{subsec:symmetries}) by adding constraints \eqref{eq:sim1}–\eqref{eq:sim3}, which enforce the automorphism $\auto{(\overline{m},\ldots,s)}{\id}$. This fixes symbols $\{1,\ldots,\overline{m}-1\}$ and cyclically permutes $\{\overline{m},\ldots,s\}$. For each run $i$, $\sigma_{\overline{m}}(i)$ denotes the mapped row.
Special cases include: $\overline{m}=1$ (full cycle), $\overline{m}=2$ (difference matrices \cite{montoya2022}), $\overline{m}=s-1$ (minimal cycle), and $\overline{m}=s$ (no symmetry).



\begin{subequations}\label{eq:AOAs}
	\begin{align}
		& x_{i,j}^{m}= x_{\sigma_{\overline{m}}(i),j}^{m}, &\forall i,j,~\forall m \in \{1,\ldots,\overline{m}-1\} \label{eq:sim1}\\
		& x_{i,j}^m= x_{\sigma_{\overline{m}}(i),j}^{m+1}, &\forall i,j,~\forall m\in \{\overline{m},\ldots,s-1\}\label{eq:sim2}\\		
		& x_{i,j}^{s}= x_{\sigma_{\overline{m}}(i),j}^{\overline{m}}, &\forall i,j \label{eq:sim3}		
	\end{align}
\end{subequations} 


For instances with $k\ge 4$, we may also consider the Klein automorphism $\auto{\id}{(1,2)(3,4)}$. 
Constraints \eqref{eq:sim03}–\eqref{eq:sim04} enforce this symmetry, where $\sigma_0(i)$ denotes the image of run $i$.

\begin{subequations}\label{eq:AOAs0}
	\begin{align}
		& x_{i,3}^{m}= x_{\sigma_0(i),4}^{m}, &\forall i,m  \label{eq:sim03}\\
		& x_{i,4}^{m}= x_{\sigma_0(i),3}^{m}, &\forall i,m  \label{eq:sim04}\\
			& x_{i,k}^{m}= x_{\sigma_0(i),k}^{m}, &\forall i,m,~ \forall k>4  \label{eq:sim034}
	\end{align}
\end{subequations} 


Section \ref{sec:overview} reports the resulting AOAs obtained with this IP approach, with the applied symmetries detailed in the $G$ columns of the tables.

\section{Heuristic Methods for Almost-Orthogonal Arrays} \label{sec:heuristic}

Heuristic methods are uncommon for the search of OAs, being the only exception the use of genetic algorithms for the search for binary orthogonal arrays~\cite{mariot2018evolutionary}. In the context of AOAs, heuristic methods are more common, since to successfully solve the underlying combinatorial optimization problem we don't need to fully minimize the objective function. In this setting, the so-called threshold-accepting heuristics of Ma, Fang and Liski~\cite{ma2000new} is the more common one, having been used extensively in the context of quadrature discrepancy measures~\cite{UDpaper1,UDpaper2,UDpaper3,fang2005,ke2015}.

In our search, we use local search meta-heuristics~\cite{kaveh2018new,kaveh2014advances,kaveh2018meta} based on a local version of Pareto fronts. This leads to a whole family of heuristic algorithms, which are competitive in many of the considered cases. 

\subsection{Local Pareto Fronts}

In a metric space $(\bbX,\dist)$, we want to optimize (`minimize') the multi-objective continuous cost function
\[
f:\bbX\rightarrow \bbR^m.
\]
This problem induces a partial order $\preccurlyeq_f$ on $\bbX$ given by the component-wise partial order of the $f$-values, i.e.,
\begin{equation}
    x\preccurlyeq_f y\text{ if and only if }f(x)\leq f(y)
\end{equation}
where the right-hand side `$\leq$' is the component-wise order which satisfies $f(x)\leq f(y)$ if and only if for all $i$, $f_i(x)\leq f_i(y)$. 

In general, finding the $f$-optimal solution to this problem is the same as finding the minimum element in $\bbX$ with respect $\preccurlyeq_f$. However, in general, we cannot expect such a minimum to exist. The existence of trade-offs between the different components of $f$ leads to the definition of Pareto front.

\begin{dfn}
A \emph{Pareto front} of $f$ in $\bbX$ is a set $F\subset \bbX$ such that for all $x\in \bbX$, there is $x_\ast\in F$ such that $x_\ast\preccurlyeq_f x$, i.e., $f(x_{\ast})\leq f(x)$.
\end{dfn}

Unfortunately, being a Pareto set is a global property. To make it local, we only require for $F$ to be a Pareto front in its \emph{$r$-neighborhood} $\mcU(F,r):=\{x\in\bbX\mid \dist(x,F)\leq r\}$. 

\begin{dfn}
A \emph{local Pareto front} of $f$ in $(\bbX,\dist)$ with radius $r$ is a set $F\subset \bbX$ such that $F$ is a Pareto front of $f$ in its $r$-neighborhood $\mcU(F,r)$.
\end{dfn}

Using the notion of local Pareto fronts, we can design heuristic algorithms for finding local Pareto fronts with radius $r$ in discrete spaces as follows:

\begin{algorithm}[h]
\caption{Local Pareto Front Heuristic}\label{alg:heuristic}
\DontPrintSemicolon 
\Repeat{$F$ unchanged}{
\While{$F$ unchanged}{
take $u\in \mcU(F,r)$\tcp*{We loop over the elements of $\mcU(F,r)$}
\uIf{for all $x\in F$, $f(x)\leq f(u)$}{
Do nothing\tcp*{$u$ is not a new minimal element}
}
\Else{
$F\leftarrow F\cup \{u\}$\tcp*{$u$ is a new minimal element, add it}
\tcc{Removal of non-minimal elements after adding $u$}
\For{$x\in F$}{
\If{$f(u)\leq f(x)$}{
$F\leftarrow F\setminus \{x\}$ \tcp*{$x$ is not minimal anymore, remove it}
}
}
}
}
}
\end{algorithm}

In many practical situations, $\bbX$ will take the form of the space of sequences of length $\ell$ over a certain finite alphabet $S$ and $\dist$ will be the Hamming distance $\Hamming$ given by the number of positions that differ. In this way, looping over $\mcU(F,r)$ will be the same as looping over those sequences obtained from those of $F$ with at most $r$ changes. It is common then to loop first over all those sequences obtained with one change, then over all sequences obtained with two changes, and so on.

However, we should not assume that the setting of sequences above is the only one with practical interest. For example, we might consider as our search space a (weighted) graph with its corresponding geodesic metric.

\subsection{Local Pareto Fronts in the context of AOAs}

In the setting of AOAs, we will search over the space of $N\times k$ arrays with symbols in the finite set $S$ of size $s$, $S^{N\times k}$, using as distance the Hamming distance---recall that, in some sense, an array is just a sequence written in the form of a table. We will consider mainly the multi-objective cost function
\begin{equation}
   A\mapsto (\Unb_{p,2}(A),\Tol_2(A)),
\end{equation}
for $p=1$ and $p=2$, constructing this way a local Pareto front for the $p$-unbalance and the tolerance. To initialize the algorithm, we use a singleton with a randomly generated array.

A priori, we could consider our heuristic algorithms for any $r$. However, from our practical computations, we have that these algorithms are not competitive for $r\neq 2$. If $r<2$, then the algorithms terminates but it does not produce competitive outputs; and if $r>2$, then the algorithm does not terminate at all---within the considered time limit of 72 hours in the used computational cluster. Hence we limit ourselves to the case $r=2$. We refer to this as \emph{Two-Stage Local Search} (TS) since the search occurs in two stages: a first stage in which we loop over all arrays obtained by one change, and a second stage in which we loop over all arrays obtained by two changes.

\begin{remark}
Observe that, for $r\in\bbN$, the size of an $r$-ball in $(S^\ell,\Hamming)$ is
\begin{equation}
\sum_{k=0}^r\binom{\ell}{k}(s^\ell-1)^k.
\end{equation}
Hence we should expect the size of the $r$-neighborhood to be exponential in $r$. In the particular case $r=2$, we have
\begin{equation}
1+\ell (s^{\ell}-1)+\binom{\ell}{2}(s^{\ell}-1)^2\sim \ell^2 s^{2\ell}.
\end{equation}
\end{remark}

\subsection{Exploiting symmetry}

In order to reduce the search space, we can restrict ourselves to the symmetric arrays introduced in Subsection~\ref{subsec:symmetries} with which we reduce the search space significantly. As said in this subsection, for the heuristic method, we focus on two kinds of symmetric arrays: the bi-cyclic (TSBC or, simply, BC) and the quasi-cyclic (TSQC, or, simply, QC). 

From Subsection~\ref{subsec:symmetries}, we have that TSBC and TSQC will run faster as they significantly reduce the size of the considered arrays. In this way, the size of the neighborhoods that appears during the execution of the search is significantly smaller---by the above remark, the dependence of the size of Hamming 2-balls on the size of the array is roughly exponential.

\section{Algebraic Construction of Almost-Orthogonal Arrays} \label{sec:algebraic}

To construct optimal orthogonal arrays with an odd primer power number of level sets, constructions rely on finite fields. To do this, there are three main constructions: Bush construction~\cite[\S3.2]{heslst1999}, the Addelman-Kempthorne construction~\cite[\S3.3]{heslst1999} and the Rao-Hamming construction~\cite[\S3.4]{heslst1999}. 

In this section, we will use the variations of the Addelman-Kempthorne construction to show the following two theorems. The first one covers the case of index one and the second one the case of index two. The constructions are given after the theorems and the proofs can be found in Appendix~\ref{secproofB}. 

\begin{thm}\label{thm:AKconstruction}
Let $s$ be a prime power and $\tilde{\kappa}\leq s\,\frac{s^{\ell-1}-1}{s-1}$ and $\ell\geq 2$ positive integers. There is an array $A\in M_{s^{\ell}\times \frac{s^{\ell}-1}{s-1}+\tilde{\kappa}}(\{1,\ldots,s\})$ with $s^\ell$ runs, $\frac{s^{\ell}-1}{s-1}+\tilde{\kappa}$ factors and $s$ levels such that:
\begin{enumerate}
    \item[(0)] $A$ is an OA of strength one.
    \item[(1a)] The first $\frac{s^{\ell}-1}{s-1}$ columns of $A$ form an OA of strength two.
    \item[(1b)] The last $\tilde{\kappa}$ columns of $A$ form an OA of strength two.
    \item[(2)] The tolerance of strength two of $A$ satisfies:
    \begin{equation}
    \Tol_2(A)=s^{\ell-2}.
    \end{equation}
    \item[(3)] For every $p\in[1,\infty)$,
    \begin{equation}
    \Unb_{p,2}(A)=\tilde{\kappa}s^2(s-1)s^{(\ell-2)p}.
    \end{equation}
\end{enumerate}
\end{thm}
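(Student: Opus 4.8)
The plan is to work directly from the explicit construction given after the theorem, identifying the $s^\ell$ runs with the vectors of $\bbF_s^\ell$ and each factor $c$ with a map $\phi_c\colon\bbF_s^\ell\to\bbF_s$ (a linear functional for the first $\frac{s^\ell-1}{s-1}$ factors, and an Addelman--Kempthorne ``twisted'' functional for the last $\tilde\kappa$). With this identification, $n(A,x,(c,c'))$ from \eqref{eq:countingnumber} is exactly the size of the joint fibre $\#\{y\in\bbF_s^\ell : \phi_c(y)=x_1,\ \phi_{c'}(y)=x_2\}$, so every claim reduces to counting such fibres. By Proposition~\ref{prop:OAchar1}, items (0), (1a) and (1b) are precisely the statements that the relevant fibres all share the common size $s^{\ell-1}$ (strength one) or $s^{\ell-2}=\lambda$ (strength two).

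First I would dispose of (0), (1a), (1b). Balance of each column (item (0)) follows because every $\phi_c$ is surjective with equal fibres; for the linear functionals this is immediate, and for the twisted functionals it is built into the construction. Item (1a) is the classical Rao--Hamming fact: two distinct projective functionals are linearly independent, so their joint map onto $\bbF_s^2$ is surjective with uniform fibres of size $\lambda$. Item (1b) is the defining orthogonality property of the chosen twisted functionals; here the hypothesis $\tilde\kappa\le s\frac{s^{\ell-1}-1}{s-1}$ is exactly what guarantees that enough pairwise strength-two factors are available.

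The crux is (2) and (3). I would first record a reduction: since (3) must hold for \emph{every} $p\in[1,\infty)$ and the right-hand side is a constant times $\lambda^p$, matching the behaviour as $p\to\infty$ against the value at $p=1$ forces every deviation $|n(A,x,j)-\lambda|$ to be either $0$ or exactly $\lambda$, with the number of deviating cells equal to $\tilde\kappa s^2(s-1)$. Thus it suffices to prove the single structural statement that for each mixed pair $j=(c,c')$ the counts satisfy $n(A,x,j)\in\{0,\lambda,2\lambda\}$, that such deviations occur for only $\tilde\kappa s^2(s-1)$ triples $(x,j)$ in total, and (consequently) that $\Tol_2(A)=\lambda=s^{\ell-2}$. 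To compute the mixed-fibre sizes I would use a nontrivial additive character $\psi$ of $\bbF_s$ and write
\[
n(A,x,(c,c'))=\frac{1}{s^2}\sum_{a,b\in\bbF_s}\psi(-ax_1-bx_2)\sum_{y\in\bbF_s^\ell}\psi\bigl(a\phi_c(y)+b\phi_{c'}(y)\bigr),
\]
and then classify the inner sums: they vanish whenever $a\phi_c+b\phi_{c'}$ is a nonzero linear functional, equal $s^\ell$ when it vanishes identically, and reduce to the level-set count of the surviving twist otherwise. Collecting terms yields the announced all-or-nothing pattern, whence (2); summing $|n-\lambda|^p$ over the deviating cells gives (3). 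As a cross-check for $p=2$, one can instead feed the resulting Hamming-similarity distribution into Theorem~\ref{thm:HamminSimilarityvsUnbalance}.

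The main obstacle is the last step: controlling the inner exponential sums coming from the nonlinear Addelman--Kempthorne term so that they land \emph{exactly} on $\{0,\lambda,2\lambda\}$ rather than on generic (square-root size) Gauss-sum values, and so that the total number of deviating cells is independent of the number $\frac{s^\ell-1}{s-1}$ of linear factors---that is, so that each twisted factor is orthogonal to all but a bounded, construction-determined set of linear factors. Verifying this requires the precise algebraic form of the twist and careful bookkeeping of which linear combinations $a\phi_c+b\phi_{c'}$ remain linear; this is where the specific structure of the construction, rather than general position, is essential.
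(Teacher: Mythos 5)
Your overall architecture coincides with the paper's: identify runs with $\bbF_s^\ell$ and factors with functionals, reduce all five items to counting joint fibres, note that (0), (1a), (1b) are uniform-fibre statements, and isolate mixed pairs (one linear factor, one twisted factor, sharing the same $y$-part $\gamma$) as the only possible source of deviation. Your structural reduction is also sound: once one shows that every count $n(A,x,j)$ lies in $\{0,\lambda,2\lambda\}$ and that exactly $\tilde\kappa s^2(s-1)$ cells deviate, items (2) and (3) follow immediately. The character-sum identity you write is valid, and your classification of the inner sums is the Fourier-side mirror of the paper's case analysis; in particular, the key design feature underlying (1b) is visible there too (two twisted columns carry the same coefficient on $x^2$, so whenever their $y$-parts cancel the quadratic terms cancel as well, leaving a nonzero linear functional).

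The genuine gap is that you stop exactly where the proof of (2) and (3) begins. What you defer as the ``main obstacle''---showing that the surviving exponential sums force counts exactly in $\{0,\lambda,2\lambda\}$, with the right multiplicities---is the entire content of the theorem, and it is never carried out. The paper does this directly, with no characters: for a mixed pair with $\gamma=\tilde\gamma$, substituting the linear equation into the quadratic one gives $x^2+(\beta-a)x+(u-v)=0$, whose number of roots is governed by the discriminant $\Delta(u,v)=(\beta-a)^2-4(u-v)$; since $\Delta$ is an affine function of $u-v$, it vanishes for exactly $s$ pairs $(u,v)$, is a nonzero square for $s(s-1)/2$ and a nonsquare for $s(s-1)/2$, yielding counts $\lambda$, $2\lambda$, $0$ with precisely these frequencies---whence $\Tol_2(A)=s^{\ell-2}$ and $\Unb_{p,2}(A)=\tilde\kappa\, s\cdot s(s-1)\,\lambda^p$, as claimed. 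In your language this amounts to an exact evaluation of quadratic Gauss sums, which is classical but must actually be performed, together with the count of deviating $(u,v)$. Moreover, any such argument is characteristic-sensitive, and your proposal never distinguishes odd from even $s$: the paper gives a separate proof for even $s$, where $4=0$ and $x\mapsto x^2$ is additive, so the discriminant criterion must be replaced by the Artin--Schreier/trace criterion $\im L=\ker T$. There the deviation pattern is different (the twisted column is exactly orthogonal to the single matching linear column with $a=\beta$, and deviates on exactly half of the pairs $(u,v)$ for each of the other $s-1$ matching linear columns), and only after this bookkeeping does the total $s^2(s-1)$ per twisted column come out the same. This entire even-characteristic half of the proof is absent from your plan.
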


\begin{thm}\label{thm:AKconstructionB}
Let $s>2$ be a prime power and $\tilde{\kappa}\leq s-1$ and $\ell\geq 2$ positive integers. There is an array $A\in M_{2s^{\ell}\times 2\frac{s^{\ell}-1}{s-1}-1+\tilde{\kappa}}(\{1,\ldots,s\})$ with $2s^\ell$ runs, $2\frac{s^{\ell}-1}{s-1}-1+\tilde{\kappa}$ factors and $s$ levels such that:
\begin{enumerate}
    \item[(0)] $A$ is an OA of strength one.
    \item[(1a)] The first $\frac{2s^{\ell}-1}{s-1}-1$ columns of $A$ form an OA of strength two.
    \item[(1b)] If we remove the first column of $A$ and any of the $\tilde{\kappa}-1$ last columns of $A$, we have an OA of strength two.
    \item[(2)] The tolerance of strength two of $A$ satisfies:
    \begin{equation}
    \Tol_2(A)=\max\{2,(s-2)\}s^{\ell-2}.
    \end{equation}
    \item[(3)] For every $p\in[1,\infty)$,
    \begin{equation}
    \Unb_{p,2}(A)=\binom{\tilde{\kappa}+1}{2}2s(s-2)\left((s-2)^{p-1}+2^{p-1}\right)s^{(\ell-2)p}.
    \end{equation}
\end{enumerate}
\end{thm}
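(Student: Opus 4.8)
The plan is to build everything on the (generalized) Addelman--Kempthorne construction presented right after the statement. First I would lay out the construction explicitly: the $2s^\ell$ runs are partitioned into two blocks of $s^\ell$ runs each, indexed by $\bbF_{s^\ell}$ viewed as an $\ell$-dimensional $\bbF_s$-space, and each of the $2\frac{s^\ell-1}{s-1}-1$ base factors is a prescribed $\bbF_s$-valued function that is affine-linear on each block, with the second-block functions modified by the Addelman--Kempthorne twist. Establishing (0) and (1a) is then the classical verification that these functions and their pairwise differences are balanced; I would reproduce the standard counting (or character-sum) argument showing that every ordered pair of distinct base factors realizes each element of $\bbF_s^2$ exactly $\lambda=N/s^2=2s^{\ell-2}$ times, so that by Proposition~\ref{prop:OAchar1} the base block is an OA of strength two.

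Next I would define the $\tilde\kappa$ extra factors as further AK-type functions chosen so that each is orthogonal, of strength two, to every base factor except the first. Proving (1b) then amounts to checking, again via the balance of a single linear functional and its differences, that deleting the first factor together with all but one of the extra factors leaves a family all of whose pairwise counts equal $\lambda$. The design point of the construction is precisely that each extra factor fails orthogonality only against the first factor and against the other extra factors.

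With (0), (1a), (1b) in hand, the computation of $\Tol_2$ and $\Unb_{p,2}$ reduces to the imbalanced pairs. Every pair covered by (1a)--(1b) contributes $0$ to the sums of Definition~\ref{def:unbal}, so the only contributing pairs are the $\tilde\kappa$ pairs (first factor, extra factor) and the $\binom{\tilde\kappa}{2}$ pairs (extra factor, extra factor), giving $\binom{\tilde\kappa}{2}+\tilde\kappa=\binom{\tilde\kappa+1}{2}$ imbalanced pairs in total, which already explains the prefactor in (3). The crux is a single distributional lemma: for each imbalanced pair $j$, exactly $2s$ of the $s^2$ combinations $x\in\bbF_s^2$ satisfy $n(A,x,j)=s^{\ell-1}$, while the remaining $s(s-2)$ satisfy $n(A,x,j)=0$. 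I would prove this by restricting the two columns of the pair to each block of $s^\ell$ runs: on each block the two columns are affinely dependent, so the pair takes values on a coset of $s$ combinations, each hit by the $s^{\ell-1}$ runs lying in the corresponding kernel; the Addelman--Kempthorne twist on the second block shifts this coset off the one coming from the first block, so that the two cosets are disjoint and their union is exactly $2s$ combinations, each occurring $s^{\ell-1}$ times (consistent with $2s\cdot s^{\ell-1}=2s^\ell=N$).

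Granting the lemma, the deviations on an imbalanced pair are $n-\lambda=+(s-2)s^{\ell-2}$ on the $2s$ occupied combinations and $n-\lambda=-2s^{\ell-2}$ on the remaining $s(s-2)$, which immediately gives $\Tol_2(A)=\max\{2,s-2\}s^{\ell-2}$, proving (2). For (3), each imbalanced pair contributes $2s\big((s-2)s^{\ell-2}\big)^p+s(s-2)\big(2s^{\ell-2}\big)^p=2s(s-2)\big((s-2)^{p-1}+2^{p-1}\big)s^{(\ell-2)p}$, and summing over the $\binom{\tilde\kappa+1}{2}$ imbalanced pairs yields the stated formula. The main obstacle is the distributional lemma: verifying that the twist makes the two block-cosets disjoint, so that their supports meet in exactly $2s$ combinations, requires a careful finite-field count, and one must check that the same count holds uniformly for both pair types (first--extra and extra--extra) and independently of $\tilde\kappa$ and $\ell$.
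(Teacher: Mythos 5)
Your high-level architecture matches the paper's proof exactly: identify the $\binom{\tilde\kappa+1}{2}$ imbalanced pairs (first column against each $E$-column, and the $E$-columns among themselves), establish a distributional lemma saying each such pair hits $2s$ combinations $s^{\ell-1}$ times and misses the remaining $s(s-2)$, and then your arithmetic for (2) and (3) is correct and identical to the paper's. The gap lies in the two places where the actual finite-field work happens, and it stems from a mischaracterization of the construction. The Addelman--Kempthorne columns are \emph{not} ``affine-linear on each block'': besides the linear columns $ax+\gamma^Ty$, the base OA of claim (1a) contains genuinely quadratic columns, of the form $(x-\beta)^2+\gamma^Ty$ for odd $s$ (resp.\ $x^2+\beta x+\gamma^Ty$ for even $s$), and the extra columns are quadratic too. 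As a consequence, the support of an imbalanced pair restricted to one block is the graph of a quadratic map (a conic with $s$ affine points, identified in the paper via a Sylvester resultant), not an additive coset cut out by a kernel; so the ``coset shifted off a coset'' mechanism you invoke does not exist, and disjointness of the two blocks' supports must instead be proved from the choice of twist: for odd $s$, a common point would force $(u-\beta)^2=a^2/(4\omega)$ to be solvable, contradicting that $\omega$ is a non-square; for even $s$ one uses the trace map $T$ and the condition $T(\zeta)=1$.

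More seriously, claim (1a) is not a routine balance statement about linear functionals and cannot be verified block by block. When a linear column is paired with a quadratic column sharing the same $\gamma$, a \emph{single} block is not balanced at all: the count is $2s^{\ell-2}$, $s^{\ell-2}$ or $0$ according to whether a discriminant is a nonzero square, zero, or a non-square. Strength two holds only because the twist in the second block multiplies that discriminant by the non-square $\omega$ (odd $s$), or shifts its trace by $T(\zeta)=1$ (even $s$), so the excesses of one block cancel the deficits of the other for every $(u,v)$. This cross-block compensation is the heart of the construction, and a character-sum argument for affine-linear functions simply does not apply to it. Note also that your twist $\left(1-\frac{1}{\omega}\right)\frac{a^2}{4}$ presupposes odd characteristic, whereas the theorem covers even prime powers $s>2$, for which the paper needs a separate construction and trace-based case analysis.
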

\begin{remark}
We exclude the case $s=2$ from Theorem~\ref{thm:AKconstructionB}, since we get an OA in that case.
\end{remark}
\begin{remark}
For $p=2$ and $\ell=2$, Theorem~\ref{thm:AKconstruction} attains the optimal lower bound of Corollary~\ref{cor:HamminSimilarityvsUnbalance2} for any choice of $\tilde{\kappa}$ and Theorem~\ref{thm:AKconstructionB} does so too for $\tilde{\kappa}=1$. Hence, for these parameters, the AOAs of Theorems~\ref{thm:AKconstruction} and~\ref{thm:AKconstructionB} (Constructions~\ref{constructionAK},~\ref{constructionAKB} and \ref{constructionAKBeven}) are optimal.
\end{remark}

In what follows, $s$ will be a prime power, $\bbF_s$ will denote the finite field with $s$ elements, and $\ell$ and $\tilde{\kappa}$ fixed as in the statement of the theorem. Also, we will denote vectors of $\bbF_s^\ell$ as $(x,y)$ with $x\in \bbF_s$ and $y\in\bbF_s^{\ell-1}$, and we will use the notation
\begin{equation}
    \gamma^Ty:=\sum_{i=1}^{\ell-1}\gamma_iy_i
\end{equation}
for $\gamma,y\in\bbF_s^{\ell-1}$. Now, we can provide the constructions for Theorems~\ref{thm:AKconstruction} and~\ref{thm:AKconstructionB}.

\begin{construct}[Addelman-Kempthorne half-construction]\label{constructionAK}
Take a subset
\[
\Gamma\subset\bbF_s^{\ell-1}
\]
maximal with respect the property that every two elements in $\Gamma$ are $\bbF_s$-linearly independent, i.e., every $\bbF_s$-linear line in $\bbF_s^{\ell-1}$ is spanned by one and only one element in $\Gamma$.

On the one hand, consider the array $L\in M_{s^\ell\times 1+s\frac{s^{\ell-1}-1}{s-1}}(\bbF_s)$, with rows indexed by $\bbF_s^\ell$ and columns indexed by the disjoint union of $\{\ast\}$ and $\bbF_s\times \Gamma$, given by
\begin{equation}
L_{(x,y),z}=\begin{cases}
x ,&\text{if }z=\ast\\
ax+\gamma^Ty ,&\text{if }z=(a,\gamma)\in\bbF_s\times \Gamma 
\end{cases}
\end{equation}
where $(x,y)\in\bbF_s\times \bbF_s^{\ell-1}\cong \bbF_s^{\ell}$ and $z\in \{\ast\}\sqcup \bbF_s\times \Gamma$.

On the other hand, consider the array $Q\in M_{s^\ell\times \tilde{\kappa}}(\bbF_s)$, with rows indexed by $\bbF_s^\ell$ and columns by a subset $\Xi\subseteq \bbF_s\times \Gamma$ of size $\tilde{\kappa}$, given by
\begin{equation}
Q_{(x,y),(\beta,\gamma)}=x^2+\beta x+\gamma^Ty
\end{equation}
where $(x,y)\in\bbF_s\times \bbF_s^{\ell-1}\cong \bbF_s^{\ell}$ and $(\beta,\gamma)\in\Xi\subset \bbF_s\times \Gamma$.

The array
\begin{equation}
A=\begin{pmatrix}
    L&Q
\end{pmatrix}\in M_{s^{\ell}\times \frac{s^{\ell}-1}{s-1}+\tilde{\kappa}}(\bbF_s)
\end{equation}
satisfies the claims of Theorem~\ref{thm:AKconstruction}.
\end{construct}

\begin{construct}[Extension of Addelman-Kempthorne for odd $s$]\label{constructionAKB}
Take a non-square $\omega\in\bbF_s$ and a subset
\begin{equation}
\Gamma\subset\bbF_s^{\ell-1}
\end{equation}
maximal with respect the property that every two elements in $\Gamma$ are $\bbF_s$-linearly independent, as we did in the construction above.

First, consider the arrays $L,\tilde{L}\in M_{s^\ell\times 1+s\frac{s^{\ell-1}-1}{s-1}}(\bbF_s)$, with rows indexed by $\bbF_s^\ell$ and columns indexed by the disjoint union of $\{\ast\}$ and $\bbF_s\times \Gamma$, given by
\begin{equation}
L_{(x,y),z}=\begin{cases}
x ,&\text{if }z=\ast\\
ax+\gamma^Ty ,&\text{if }z=(a,\gamma)\in\bbF_s\times \Gamma 
\end{cases}
\end{equation}
and
\begin{equation}
\tilde{L}_{(x,y),z}=\begin{cases}
x ,&\text{if }z=\ast\\
ax+\gamma^Ty+\left(1-\frac{1}{\omega}\right)\frac{a^2}{4} ,&\text{if }z=(a,\gamma)\in\bbF_s\times \Gamma 
\end{cases}
\end{equation}
where $(x,y)\in\bbF_s\times \bbF_s^{\ell-1}\cong \bbF_s^{\ell}$ and $z\in \{\ast\}\sqcup \bbF_s\times \Gamma$.

Second, consider the arrays $Q,\tilde{Q}\in M_{s^\ell\times \tilde{\kappa}}(\bbF_s)$, with rows indexed by $\bbF_s^\ell$ and columns by $\bbF_s\times \Gamma$, given by
\begin{equation}
Q_{(x,y),(\beta,\gamma)}=(x-\beta)^2+\gamma^Ty
\end{equation}
and
\begin{equation}
\tilde{Q}_{(x,y),(\beta,\gamma)}=\omega(x-\beta)^2+\gamma^Ty
\end{equation}
where $(x,y)\in\bbF_s\times \bbF_s^{\ell-1}\cong \bbF_s^{\ell}$ and $(\beta,\gamma)\in \bbF_s\times \Gamma$.

Third, consider the arrays $E,\tilde{E}\in M_{s^\ell\times \tilde{\kappa}}(\bbF_s)$, with rows indexed by $\bbF_s^\ell$ and columns by a subset $\Xi\subset (\bbF_s\setminus 0)\times \{\beta_0\}\subset (\bbF_s\setminus 0)\times \bbF_s$, for some $\beta_0\in\bbF_s$, given  by
\begin{equation}
E_{(x,y),(a,\beta)}=(x-\beta)^2-ax
\end{equation}
and
\begin{equation}
\tilde{E}_{(x,y),(a,\beta)}=\omega(x-\beta)^2-ax-\left(1-\frac{1}{\omega}\right)\frac{a^2}{4}
\end{equation}
where $(x,y)\in\bbF_s\times \bbF_s^{\ell-1}\cong \bbF_s^{\ell}$ and $(a,\beta)\in\Xi\subset \{(a,\beta_0)\in \bbF_s^2\mid a\neq 0\}$.

The array
\begin{equation}
A=\begin{pmatrix}
    L&Q&E\\
    \tilde{L}&\tilde{Q}&\tilde{E}
\end{pmatrix}\in M_{2s^{\ell}\times 2\frac{s^{\ell}-1}{s-1}-1+\tilde{\kappa}}(\bbF_s)
\end{equation}
satisfies the claims of Theorem~\ref{thm:AKconstructionB} for odd $s$.
\end{construct}

\begin{construct}[Extension of Addelman-Kempthorne for even $s$]\label{constructionAKBeven}
Take $\zeta\in\bbF_s$ such that $\zeta^{s/2}\neq \zeta$ (i.e., $\zeta\notin\bbF_{s/2}$ when $s>2$) and a subset of
\begin{equation}
\Gamma\subset\bbF_s^{\ell-1}
\end{equation}
maximal with respect the property that every two elements in $\Gamma$ are $\bbF_s$-linearly independent, as we did in the construction above.

First, consider the arrays $L,\tilde{L}\in M_{s^\ell\times 1+s\frac{s^{\ell-1}-1}{s-1}}(\bbF_s)$, with rows indexed by $\bbF_s^\ell$ and columns indexed by the disjoint union of $\{\ast\}$ and $\bbF_s\times \Gamma$, given by
\begin{equation}
L_{(x,y),z}=\begin{cases}
x ,&\text{if }z=\ast\\
ax+\gamma^Ty ,&\text{if }z=(a,\gamma)\in\bbF_s\times \Gamma 
\end{cases}
\end{equation}
and
\begin{equation}
\tilde{L}_{(x,y),z}=\begin{cases}
x ,&\text{if }z=\ast\\
ax+\gamma^Ty+a^2\zeta ,&\text{if }z=(a,\gamma)\in\bbF_s\times \Gamma 
\end{cases}
\end{equation}
where $(x,y)\in\bbF_s\times \bbF_s^{\ell-1}\cong \bbF_s^{\ell}$ and $z\in \{\ast\}\sqcup \bbF_s\times \Gamma$.

Second, consider the arrays $Q,\tilde{Q}\in M_{s^\ell\times \tilde{\kappa}}(\bbF_s)$, with rows indexed by $\bbF_s^\ell$ and columns by $\bbF_s\times \Gamma$, given by
\begin{equation}
Q_{(x,y),(\beta,\gamma)}=x^2+\beta x+\gamma^Ty
\end{equation}
and
\begin{equation}
\tilde{Q}_{(x,y),(\beta,\gamma)}=x^2+\beta x+\gamma^Ty+\beta^2\zeta
\end{equation}
where $(x,y)\in\bbF_s\times \bbF_s^{\ell-1}\cong \bbF_s^{\ell}$ and $(\beta,\gamma)\in \bbF_s\times \Gamma$.

Third, consider the arrays $E,\tilde{E}\in M_{s^\ell\times \tilde{\kappa}}(\bbF_s)$, with rows indexed by $\bbF_s^\ell$ and columns by a subset $\Xi\subset \bbF_s\setminus 0$ of size $\tilde{\kappa}$ given by
\begin{equation}
E_{(x,y),\delta}=x^2+\delta x
\end{equation}
and
\begin{equation}
\tilde{E}_{(x,y),\delta}=x^2+\delta x+\delta^2\zeta
\end{equation}
where $(x,y)\in\bbF_s\times \bbF_s^{\ell-1}\cong \bbF_s^{\ell}$ and $\delta\in\Xi\subset \bbF_s\setminus 0$.

The array
\begin{equation}
A=\begin{pmatrix}
    L&Q&E\\
    \tilde{L}&\tilde{Q}&\tilde{E}
\end{pmatrix}\in M_{2s^{\ell}\times 2\frac{s^{\ell}-1}{s-1}-1+\tilde{\kappa}}(\bbF_s)
\end{equation}
satisfies the claims of Theorem~\ref{thm:AKconstructionB} for even $s$.
\end{construct}

Recall that the proofs of Theorems~\ref{thm:AKconstruction} and~\ref{thm:AKconstructionB} are in Appendix~\ref{secproofB}.

\section*{Conflict of Interest Statement}

The authors declare than none of the authors have a conflict of interest with respect to this paper.

{\small 
\printbibliography[heading=bibliography]
}

\appendix 

\section{Lower Bounds on the \texorpdfstring{$2$}{2}-Unbalance}\label{sec:unbalancebounds}

In the special case of the $2$-unbalance, an alternative formula can be given in terms of how similar the runs of the array are. This generalizes~\cite[Lemma 2.7]{heslst1999} to general arrays and was already proven in \cite[Lemma 3.1]{UDpaper2}. Moreover, it plays a fundamental role in obtaining lower bounds for the $2$-unbalance, as shown in \cite{UDpaper8liuhickernell,UDpaper10}. We move all proofs to Appendix~\ref{sec:unbalanceproofs} to ease the reading of this appendix.

\begin{thm}\label{thm:HamminSimilarityvsUnbalance}
Let $A\in S^{N\times k}$ be an $N\times k$ array with entries in a set $S$ of size $s$ and $t\in \bbN$. Denote by
\begin{equation}
    \HammingSimilarity(A;r,\tilde{r}):=\#\{j\in [k]\mid A_{r,j}=A_{\tilde{r},j}\}
\end{equation}
the \emph{Hamming similarity} between the $r$th and the $\tilde{r}$th runs of $A$. Then
\begin{equation}\label{eq:hammingformula}
    \Unb_{2,t}(A)=\sum_{1\leq r,\tilde{r}\leq N}\binom{\HammingSimilarity(A;r,\tilde{r})}{t}-\binom{k}{t}\frac{N^2}{s^t}.
\end{equation}
\end{thm}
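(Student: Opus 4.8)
The plan is to expand the square inside the definition of $\Unb_{2,t}(A)$ and evaluate the three resulting sums separately, the decisive one being a double-counting argument that produces the binomial coefficients $\binom{\HammingSimilarity(A;r,\tilde r)}{t}$. Concretely, I would start from
\begin{equation*}
\Unb_{2,t}(A) = \sum_{x\in S^t}\sum_{j\in T_{t,k}}\left(n(A,x,j)^2 - 2\,\frac{N}{s^t}\,n(A,x,j) + \frac{N^2}{s^{2t}}\right)
\end{equation*}
and treat the quadratic, linear, and constant terms in turn.

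The core of the argument is the quadratic term. The key observation is that $n(A,x,j)^2$ counts ordered pairs $(r,\tilde r)\in[N]^2$ of runs that both restrict to the tuple $x$ on the columns indexed by $j$; hence $\sum_{x\in S^t} n(A,x,j)^2$ counts those ordered pairs $(r,\tilde r)$ whose runs agree on \emph{all} columns of $j$. Summing over $j\in T_{t,k}$ and interchanging the order of summation, for a fixed pair $(r,\tilde r)$ the number of $j\in T_{t,k}$ on which the two runs agree is precisely the number of $t$-subsets of the set of columns where they coincide, which has size $\HammingSimilarity(A;r,\tilde r)$; under the bijection $T_{t,k}\cong\binom{[k]}{t}$ noted earlier, this count is $\binom{\HammingSimilarity(A;r,\tilde r)}{t}$. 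This yields $\sum_{1\le r,\tilde r\le N}\binom{\HammingSimilarity(A;r,\tilde r)}{t}$ as the total quadratic contribution.

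The remaining two sums are routine. Since each of the $N$ runs contributes exactly one $t$-tuple on any fixed column-set $j$, we have $\sum_{x\in S^t} n(A,x,j)=N$, so the linear term evaluates to $-2\,\tfrac{N}{s^t}\cdot N\binom{k}{t}=-2\binom{k}{t}\tfrac{N^2}{s^t}$. The constant term is a plain count of the $(x,j)$ pairs: there are $s^t\binom{k}{t}$ of them, giving $s^t\binom{k}{t}\cdot\tfrac{N^2}{s^{2t}}=\binom{k}{t}\tfrac{N^2}{s^t}$. Adding the three contributions, the two copies of $\binom{k}{t}\tfrac{N^2}{s^t}$ combine to leave a single one with a minus sign, which is exactly~\eqref{eq:hammingformula}.

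The only genuinely subtle step is the double-counting in the quadratic term: I must make sure the sum over \emph{ordered} pairs $(r,\tilde r)$ matches the ordered sum $\sum_{1\le r,\tilde r\le N}$ appearing in the statement, and that the interchange of the $x$-, $j$-, and pair-summations is legitimate (all sums being finite, this is immediate). As a sanity check, the diagonal terms $r=\tilde r$ contribute $\binom{k}{t}$ each, since a run agrees with itself on all $k$ columns, i.e.\ $\HammingSimilarity(A;r,r)=k$; everything else is a direct computation.
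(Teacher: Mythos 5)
Your proposal is correct and follows essentially the same route as the paper's proof: expand the square, dispose of the linear and constant terms via $\sum_{x\in S^t} n(A,x,j)=N$, and identify the quadratic term $\sum_{x,j} n(A,x,j)^2$ with $\sum_{r,\tilde r}\binom{\HammingSimilarity(A;r,\tilde r)}{t}$ by double counting ordered pairs of runs. The only cosmetic difference is that the paper formalizes the double-counting step with explicit $0$--$1$ indicator variables $z_{r,j}(x)$, whereas you carry out the same count verbally.
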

\begin{remark}
If we compute $\Unb_{2,t}$ directly using the definition, we need to add a total of $\binom{k}{t}s^t$ numbers, but, if we use~\eqref{eq:hammingformula}, then we have to add $N^2$ numbers. In general, the latter will give an smaller sum than the former---specially for large values of $t$. 
\end{remark}
\begin{cor}\label{cor:HamminSimilarityvsUnbalance1}
Let $A\in S^{N\times k}$ be an $N\times k$ array with entries in a set $S$ of size $s$. Then
\begin{equation}\label{eq:U2lowerbound}
\Unb_{2,2}(A)\geq \lambda s^2(\lambda s^2-1)\left(\binom{\left\lfloor\gamma\right\rfloor}{2}+\left\lfloor\gamma\right\rfloor\left(\gamma-\left\lfloor\gamma\right\rfloor\right)\right)-\lambda(\lambda-1)s^2\binom{k}{2}
\end{equation}
where $\lambda=N/s^2$ is the index (of strength two) of $A$ and $\gamma=(\lambda s-1)k/(\lambda s^2-1)$. Moreover, the lower bound is attained if and only if $A$ is an $OA(N,k,s,1)$ and if there is a subset $R\subset T_{N,2}$ of size $\binom{N}{2}(1+\left\lfloor\gamma\right\rfloor-\gamma)$ such that for $(r,\tilde{r})\in R$, $\HammingSimilarity(r,\tilde{r})=\left\lfloor\gamma\right\rfloor$ and for $(r,\tilde{r})\in T_{N,2}\setminus R$, $\HammingSimilarity(A;r,\tilde{r})=\left\lfloor\gamma\right\rfloor+1$.
\end{cor}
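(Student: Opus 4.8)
The plan is to specialize Theorem~\ref{thm:HamminSimilarityvsUnbalance} to $t=2$ and reduce the claim to a discrete convexity optimization over the Hamming similarities. Writing $h_{r,\tilde r}:=\HammingSimilarity(A;r,\tilde r)$, the theorem gives
\begin{equation}
\Unb_{2,2}(A)=\sum_{1\leq r,\tilde r\leq N}\binom{h_{r,\tilde r}}{2}-\binom{k}{2}\frac{N^2}{s^2}.
\end{equation}
First I would split off the diagonal terms $r=\tilde r$, for which $h_{r,r}=k$, from the off-diagonal ones. Since $h_{r,\tilde r}$ is symmetric, the off-diagonal ordered sum is twice the sum over $T_{N,2}$, so
\begin{equation}
\Unb_{2,2}(A)=N\binom{k}{2}+2\sum_{(r,\tilde r)\in T_{N,2}}\binom{h_{r,\tilde r}}{2}-\binom{k}{2}\frac{N^2}{s^2}.
\end{equation}
The whole problem thus reduces to lower-bounding $\sum_{T_{N,2}}\binom{h_{r,\tilde r}}{2}$.

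Second, I would control the average of the $h_{r,\tilde r}$. Double counting gives $\sum_{r,\tilde r}h_{r,\tilde r}=\sum_{j=1}^k\sum_{v\in S}n_{j,v}^2$, where $n_{j,v}$ is the number of occurrences of level $v$ in column $j$. By Cauchy--Schwarz, $\sum_v n_{j,v}^2\geq N^2/s$ with equality iff column $j$ is balanced, so $\sum_{r,\tilde r}h_{r,\tilde r}\geq kN^2/s$, with equality exactly when $A$ is an $\OA(N,k,s,1)$. Subtracting the diagonal contribution $Nk$ and halving, the average $\bar h:=\binom{N}{2}^{-1}\sum_{T_{N,2}}h_{r,\tilde r}$ satisfies $\bar h\geq \frac{k(\lambda s-1)}{\lambda s^2-1}=\gamma$, with equality iff $A$ is an $\OA(N,k,s,1)$.

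Third, and this is the crux, I would apply a discrete Jensen argument to $\binom{x}{2}$. Let $\tilde g$ be the piecewise-linear function interpolating the values $\binom{n}{2}$ at the integers; it agrees with $\binom{\cdot}{2}$ on $\bbN$, is convex, and is non-decreasing on $[0,\infty)$. Since the $h_{r,\tilde r}$ are non-negative integers, Jensen yields $\binom{N}{2}^{-1}\sum_{T_{N,2}}\binom{h_{r,\tilde r}}{2}=\binom{N}{2}^{-1}\sum \tilde g(h_{r,\tilde r})\geq \tilde g(\bar h)\geq \tilde g(\gamma)$, the last step using monotonicity and $\bar h\geq\gamma$. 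A direct computation of the interpolant gives $\tilde g(\gamma)=\binom{\lfloor\gamma\rfloor}{2}+\lfloor\gamma\rfloor(\gamma-\lfloor\gamma\rfloor)$, which, after substituting $N=\lambda s^2$ and collecting the remaining terms (noting $N\binom{k}{2}-\binom{k}{2}N^2/s^2=-\lambda(\lambda-1)s^2\binom{k}{2}$), yields exactly the claimed bound.

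Finally, for the equality characterization I would trace the two inequalities. Equality in the monotonicity step forces $\bar h=\gamma$, i.e., $A$ an $\OA(N,k,s,1)$ (as $\tilde g$ is strictly increasing past $1$); equality in Jensen for the piecewise-linear $\tilde g$ forces every $h_{r,\tilde r}$ into the single affine piece containing $\gamma$, i.e., $h_{r,\tilde r}\in\{\lfloor\gamma\rfloor,\lfloor\gamma\rfloor+1\}$, and matching the average $\gamma$ pins down the proportions, giving a set $R$ of the stated size $\binom{N}{2}(1+\lfloor\gamma\rfloor-\gamma)$ of pairs with similarity $\lfloor\gamma\rfloor$ and the rest with $\lfloor\gamma\rfloor+1$. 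The main obstacle is making this discrete convexity step sharp and extracting the exact equality conditions: one must work with the integer interpolant $\tilde g$ (the smooth $\binom{x}{2}$ would only yield the weaker bound $\binom{\bar h}{2}$) and disentangle the two sources of slack, namely column imbalance and uneven spread of the similarities.
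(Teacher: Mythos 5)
Your proof is correct and follows the same skeleton as the paper's: specialize Theorem~\ref{thm:HamminSimilarityvsUnbalance} to $t=2$, split off the diagonal to write $\Unb_{2,2}(A)=2\sum_{(r,\tilde r)\in T_{N,2}}\binom{h_{r,\tilde r}}{2}-\lambda(\lambda-1)s^2\binom{k}{2}$ with $h_{r,\tilde r}=\HammingSimilarity(A;r,\tilde r)$, reduce to a constrained discrete minimization of $\sum\binom{h_{r,\tilde r}}{2}$, and then trace equality back through the two inequalities. The differences lie in how the two key steps are discharged, and they make your argument more self-contained. For the constraint on the mean, you use double counting plus Cauchy--Schwarz on the column level counts; the paper instead observes that $\sum_{(r,\tilde r)\in T_{N,2}}h_{r,\tilde r}$ equals $\tfrac12\Unb_{2,1}(A)$ plus a constant and uses $\Unb_{2,1}(A)\geq 0$ --- these are the same inequality in different clothing (your Cauchy--Schwarz slack \emph{is} $\Unb_{2,1}(A)$), with the same equality case, namely $A$ being an $OA(N,k,s,1)$. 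For the discrete minimization, the paper cites \cite[Lemma~2]{UDpaper8liuhickernell} as a black box; your Jensen argument with the piecewise-linear interpolant $\tilde g$ of $n\mapsto\binom{n}{2}$ is in effect a proof of that lemma, and it correctly identifies that equality in Jensen is exactly what confines the similarities to $\{\lfloor\gamma\rfloor,\lfloor\gamma\rfloor+1\}$, after which matching the mean fixes $|R|$. One caveat, which you share with the paper: $\tilde g$ is constant on $[0,1]$, so your monotonicity step (and the paper's claim that the minimum is a strictly increasing function of the mean) is strict only for arguments $\geq 1$; hence the ``only if'' half of the equality characterization implicitly requires $\gamma\geq 1$, since for $\gamma<1$ one can have $\sum\binom{h_{r,\tilde r}}{2}=0$ without $A$ being an $OA(N,k,s,1)$. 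As this edge case is glossed over identically in the paper's own proof, it is a defect of the statement's boundary cases rather than of your argument.
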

\begin{cor}\label{cor:HamminSimilarityvsUnbalance2}
Let $A\in S^{N\times k}$ be an $N\times k$ array with entries in a set $S$ of size $s$. If $\lambda=1$ and $k=\alpha(s+1)+\tilde{\kappa}$ with $\alpha\in\bbN$ and $0\leq\tilde{\kappa}\leq s$, then \eqref{eq:U2lowerbound} takes the form
\begin{equation}\label{eq:U2lowerboundlambdaoneA}
    \Unb_{2,2}(A)\geq \alpha\tilde{\kappa} s^2(s-1)+\binom{\alpha}{2}s^2(s^2-1)=\begin{cases}0&\text{if }\alpha=0\\\tilde{\kappa} s^2(s-1)&\text{if }\alpha=1\end{cases}
\end{equation}
If $\lambda=2$ and $k=2s+1+\tilde{\kappa}$ with $1\leq \tilde{\kappa}\leq s$, then \eqref{eq:U2lowerbound} takes the form
\begin{equation}\label{eq:U2lowerboundlambdatwoA}
    \Unb_{2,2}(A)\geq 2s^2\left((2\tilde{\kappa}-1)(s-1)-\binom{\tilde{\kappa}+1}{2}\right);
\end{equation}
and, if $k\leq 2s+1$, then the lower bound of \eqref{eq:U2lowerbound} is negative and so unattainable.
\end{cor}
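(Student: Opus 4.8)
The plan is to specialize Corollary~\ref{cor:HamminSimilarityvsUnbalance1} by substituting the given values of $\lambda$ and analyzing the resulting expression for $\gamma$. Both statements concern the case of small $\lambda$, so the first step is to compute $\gamma=(\lambda s-1)k/(\lambda s^2-1)$ explicitly in each regime and simplify the right-hand side of~\eqref{eq:U2lowerbound}.

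For the case $\lambda=1$, I would first observe that $\gamma=(s-1)k/(s^2-1)=k/(s+1)$. Writing $k=\alpha(s+1)+\tilde\kappa$ with $0\le\tilde\kappa\le s$ gives $\gamma=\alpha+\tilde\kappa/(s+1)$, so $\lfloor\gamma\rfloor=\alpha$ (using $0\le\tilde\kappa\le s<s+1$) and $\gamma-\lfloor\gamma\rfloor=\tilde\kappa/(s+1)$. Substituting into~\eqref{eq:U2lowerbound} with $\lambda=1$ (so $\lambda s^2=s^2$ and the second term is $(\lambda-1)(\cdots)=0$) reduces the bound to $s^2(s^2-1)\bigl(\binom{\alpha}{2}+\alpha\tilde\kappa/(s+1)\bigr)$. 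Since $s^2-1=(s-1)(s+1)$, the factor $s+1$ cancels in the second summand, yielding $\binom{\alpha}{2}s^2(s^2-1)+\alpha\tilde\kappa s^2(s-1)$, which is exactly~\eqref{eq:U2lowerboundlambdaoneA}. The two displayed special cases $\alpha=0$ and $\alpha=1$ then follow by direct evaluation ($\binom{0}{2}=\binom{1}{2}=0$).

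For the case $\lambda=2$, I would proceed analogously: here $\gamma=(2s-1)k/(2s^2-1)$, and with $k=2s+1+\tilde\kappa$ the task is to pin down $\lfloor\gamma\rfloor$ and the fractional part under the constraint $1\le\tilde\kappa\le s$. I expect that $\lfloor\gamma\rfloor$ works out to a fixed small value in this range, after which substituting $\lambda s^2=2s^2$ into~\eqref{eq:U2lowerbound} and collecting terms produces~\eqref{eq:U2lowerboundlambdatwoA}. The final clause, that the bound is negative and hence unattainable when $k\le 2s+1$, should follow either by noting that $\gamma<$ some threshold forces $\lfloor\gamma\rfloor$ small enough that the positive term in~\eqref{eq:U2lowerbound} cannot compensate the subtracted term $\lambda(\lambda-1)s^2\binom{k}{2}$, or more directly from the fact that a genuine unbalance is nonnegative while the stated bound drops below zero—making the inequality vacuous.

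The main obstacle will be the $\lambda=2$ computation: determining $\lfloor\gamma\rfloor$ precisely requires careful control of the non-integer ratio $(2s-1)(2s+1+\tilde\kappa)/(2s^2-1)$ across the full range $1\le\tilde\kappa\le s$, and verifying that the floor is constant (rather than jumping) on this interval. Unlike the $\lambda=1$ case, where the denominator $s^2-1$ factors cleanly and cancels, the $\lambda=2$ denominator $2s^2-1$ does not factor conveniently, so the algebraic simplification to the closed form $2s^2\bigl((2\tilde\kappa-1)(s-1)-\binom{\tilde\kappa+1}{2}\bigr)$ will need the bulk of the bookkeeping. Once the floor is identified, the remaining manipulation is routine expansion and regrouping of binomial coefficients.
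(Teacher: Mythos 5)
Your proposal is correct and is exactly the paper's proof, which consists of the single line ``substitute the possible values into Corollary~\ref{cor:HamminSimilarityvsUnbalance1} and compute''; your $\lambda=1$ computation is complete and right. The one step you leave open—pinning down $\lfloor\gamma\rfloor$ for $\lambda=2$—resolves at once by writing $(2s-1)(2s+1+\tilde{\kappa})=2(2s^2-1)+1+(2s-1)\tilde{\kappa}$, so that
\begin{equation*}
\gamma=2+\frac{1+(2s-1)\tilde{\kappa}}{2s^2-1},
\end{equation*}
hence $\lfloor\gamma\rfloor=2$ on the whole range $1\leq\tilde{\kappa}\leq s$ (the only boundary case is $s=2$, $\tilde{\kappa}=2$, where $\gamma=3$ exactly, but the right-hand side of \eqref{eq:U2lowerbound} is continuous in $\gamma$, so the same value results); then, using $\binom{2s+1+\tilde{\kappa}}{2}=2s^2+2s\tilde{\kappa}+s+\binom{\tilde{\kappa}+1}{2}$, the bound \eqref{eq:U2lowerbound} collapses, as you predicted, to $2s^2\bigl((2\tilde{\kappa}-1)(s-1)-\binom{\tilde{\kappa}+1}{2}\bigr)$, and evaluating at $k=2s+1$ gives $2s^2(1-s)<0$, which settles the final negativity claim.
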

\begin{remark}
The lower bound \eqref{eq:U2lowerbound} (and its consequences) is not necessarily attainable. For example, if $A$ is an $OA(2s^2,2s+1,s,2)$, which exists for $s>2$ prime power by~\cite[Theorem~3.16]{heslst1999}, then the lower bound is $-2s^2(s-1)$ and so negative. Hence, the bound does not need to be tight, even in the case of OAs. Interestingly, this shows that for an $OA(2s^2,2s+1,s,2)$ the distribution of the $\HammingSimilarity(A;r,\tilde{r})$ is not as simple as the one given by Corollary~\ref{cor:HamminSimilarityvsUnbalance2}.
\end{remark}
\begin{remark}
Corollary~\ref{cor:HamminSimilarityvsUnbalance1} is nothing more than \cite[Theorem 2]{UDpaper10}. However, unlike \cite{UDpaper10}, we don't focus on the case in which $\gamma$ is an integer.
\end{remark}

\section{Comparison with other relaxations of OAs}\label{sec:comparative}

Ours is not the first relaxation of OA. Our notion of AOAs, unbalance and tolerance is similar and related to other existing relaxations of OAs. We discuss briefly the relationship between our notion and two families in the literature: the relaxations motivated by experimental designs, which do not lead to a characterization of OAs; and the more combinatorial relaxations, which do lead to characterizing measures of being an OA. As far as we know, the inequalities presented here are novel. We move all proofs to Appendix~\ref{sec:unbalanceproofs} to make the flow of text better.

\subsection{Relaxations in the context of experimental designs}

In the context of experimental designs, relaxations of OAs have been around for a long time. The main observation is that after assigning coefficients for the estimation of main effects to the levels of the array, one obtains an orthogonal matrix. This result can be summarized in the following way. Below, $\bbI$ is the identity matrix of the appropriate order.

\begin{prop}\label{prop:columnorthogonality}
Let $S$ be a set of size $s$ and non-constant $f:S\rightarrow \bbR$ such that
\begin{equation}\label{eq:fnullaverage}
    \sum_{x\in S}f(x)=0,
\end{equation}
and let $A\in S^{N\times k}$ be an $N\times k$ array with entries in $S$. Consider the real matrix $X(A,f)\in \bbR^{N\times k}$ is the matrix whose entries are given by
\begin{equation}\label{eq:Xmatrix}
X_{i,j}(A,f):=\frac{f(A_{i,j})}{\sqrt{\sum_{i=1}^Nf(A_{i,j})^2}}.
\end{equation}
If $A$ is an $OA(N,k,s,2)$, then $X(A,f)^TX(A,f)=\bbI$, i.e., the columns of $X(A,f)$ are orthogonal.  
\end{prop}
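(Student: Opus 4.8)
The plan is to compute the $(j,j')$ entry of the Gram matrix $X(A,f)^T X(A,f)$ directly and show it equals $\delta_{j,j'}$. First I would observe that the diagonal entries are automatically $1$: by construction,
\begin{equation}
\left(X(A,f)^T X(A,f)\right)_{j,j}=\sum_{i=1}^N X_{i,j}(A,f)^2=\frac{\sum_{i=1}^N f(A_{i,j})^2}{\sum_{i=1}^N f(A_{i,j})^2}=1,
\end{equation}
provided the normalizing denominator is nonzero, which I will need to justify. So the real content is the vanishing of the off-diagonal entries, and here the orthogonal-array hypothesis enters.

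For $j\neq j'$, writing $c_j:=\sqrt{\sum_{i=1}^N f(A_{i,j})^2}$ for the column normalizers, the off-diagonal entry is
\begin{equation}
\left(X(A,f)^T X(A,f)\right)_{j,j'}=\frac{1}{c_j c_{j'}}\sum_{i=1}^N f(A_{i,j})f(A_{i,j'}).
\end{equation}
The key step is to regroup this row-sum over the pairs of levels that actually occur. Using the counting function $n(A,x,(j,j'))$ from~\eqref{eq:countingnumber}, which records how many runs $i$ have $A_{i,j}=x_1$ and $A_{i,j'}=x_2$, I can rewrite the numerator as a sum over $x=(x_1,x_2)\in S^2$:
\begin{equation}
\sum_{i=1}^N f(A_{i,j})f(A_{i,j'})=\sum_{x_1\in S}\sum_{x_2\in S} n\big(A,(x_1,x_2),(j,j')\big)\, f(x_1)f(x_2).
\end{equation}
Since $j\neq j'$, the pair $(j,j')$ (suitably ordered) lies in $T_{2,k}$, so Proposition~\ref{prop:OAchar1} applies: because $A$ is an $OA(N,k,s,2)$, every count $n(A,(x_1,x_2),(j,j'))$ equals the constant $N/s^2$. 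The sum therefore factors as
\begin{equation}
\frac{N}{s^2}\sum_{x_1\in S}\sum_{x_2\in S} f(x_1)f(x_2)=\frac{N}{s^2}\left(\sum_{x\in S} f(x)\right)^2=0,
\end{equation}
where the last equality is exactly the zero-average hypothesis~\eqref{eq:fnullaverage}. This kills every off-diagonal entry and completes the identification $X(A,f)^T X(A,f)=\bbI$.

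The main obstacle, and the only subtle point, is justifying that the denominators $c_j$ are nonzero, so that $X(A,f)$ is well defined. I would handle this using strength $t=2$, which forces strength $1$: by Proposition~\ref{prop:OAchar1} applied to single columns (or by summing the count identity over one coordinate), each level appears exactly $N/s$ times in every column, so $\sum_{i=1}^N f(A_{i,j})^2=\frac{N}{s}\sum_{x\in S} f(x)^2$. Since $f$ is non-constant and has zero average, it is not identically zero, whence $\sum_{x\in S} f(x)^2>0$ and $c_j>0$ for every $j$. (In fact this shows all columns share the same normalizer, which also makes the diagonal computation transparent.) Everything else is the routine bookkeeping of exchanging the order of summation, so I would present the level-regrouping identity as the heart of the argument and relegate the non-degeneracy of $c_j$ to a short preliminary remark.
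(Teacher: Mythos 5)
Your proposal is correct and follows essentially the same route as the paper's proof: both regroup the column inner product over level pairs via the counting function $n(A,(x_1,x_2),(j,j'))$, apply the $OA(N,k,s,2)$ property to replace every count by $N/s^2$, and invoke the zero-average hypothesis~\eqref{eq:fnullaverage} to kill the off-diagonal entries, while using strength one to evaluate the column norms. The only difference is presentational: the paper normalizes $f$ so that $\sum_{x\in S}f(x)^2=1$ at the outset, whereas you keep general normalizers $c_j$ and make explicit the (correct) observation that non-constancy of $f$ guarantees $c_j>0$, a point the paper leaves implicit.
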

\begin{remark}
The choice of $f$ depends generally on the experimental design that one is considering. In~\cite[Ch.~11]{hinkelmankempthorne2008}, one can see the choice $f(x)=2x-3$ when $S=\{1,2\}$ and the choices $f(x)=x-2$ and $f(x)=3(x-2)^2-2$ when $S=\{1,2,3\}$. In all cases, \eqref{eq:fnullaverage} holds.
\end{remark}

Based on this result, in the case of $s=2$, Booth and Cox~\cite{booth1962} proposed to look at how near of being zero
$X(A,f)^TX(A,f)-\bbI$
is. Depending on the norm used, the latter leads to $\mathrm{Ave}(s^2)$ (also known as $E(s^2)$), if we use the Frobenius norm; $\mathrm{Ave}(|s|)$, using the $L_{1,1}$-norm; and $s_{\max}$, using the max-norm. All these measures led to a fruitful search and discussion regarding AOAs of strength two and two levels~\cite{lin1993,lin1995,wu1993,denglinwang1999}.

Based on the above, generalizing the above beyond the two level case, Wang and Wu~\cite{wang1992nearly} introduced for the first time the term of ``nearly-orthogonal array''. To measure non-orthogonality, however, Wang and Wu used the so called $\mcD$-criterion, based on the $\mcD$-value defined below.

\begin{dfn}\label{defi:Dvalue}
Let $S$ be a set of size $s$ and non-constant $f:S\rightarrow \bbR$ such that~\eqref{eq:fnullaverage} holds and $A\in S^{N\times k}$ be an $N\times k$ array with entries in $S$. The \emph{$\mathcal{D}$-value of $A$ with respect $f$} is the number
\begin{equation}
\mathcal{D}_f(A):=\det(X(A,f)^TX(A,f))^{\frac{1}{k}}\in[0,1]
\end{equation}
where $X(A,f)\in \bbR^{N\times k}$ is the matrix defined by \eqref{eq:Xmatrix}.
\end{dfn}
\begin{remark}\label{rem:Dcritbound}
Note that $\mathcal{D}_f(A)$ is a real number in $[0,1]$, because $X(A,f)^TX(A,f)$ is a positive semi-definite matrix with ones in the diagonal. To see this, we only have to apply the arithmetic-geometric inequality to the eigenvalues. Moreover, by the equality case, $\mathcal{D}_f(A)= 1$ if and only if $X(A,f)$ has orthogonal columns.
\end{remark}
\begin{prop}[$\mathcal{D}$-criterion]\label{prop:Dcriterion}
Let $A\in S^{N\times k}$ be an $N\times k$ array with entries in a set $S$ of size $s$ and non-constant $f:S\rightarrow\mathbb{R}$ such that \eqref{eq:fnullaverage} holds. If $A$ is an $OA(N,k,s,2)$, then $\mathcal{D}_f(A)=1$.
\end{prop}

We can see that the unbalance and tolerance give us good measures of how far is $X(A,f)$ of being orthogonal. Moreover, we obtain an explicit bound on the $\mcD$-value---unfortunately, the existing bounds on the determinant of a perturbed identity matrix~\cite{brentosbornsmith2015det,rump2018det} don't allow us to obtain stronger bounds than the stated.

\begin{thm}\label{theo:DcriterionvsUnbalance}
Let $A\in S^{N\times k}$ be an $N\times k$ array with entries in a set $S$ of size $s$ and non-constant $f:S\rightarrow\mathbb{R}$ such that \eqref{eq:fnullaverage}. If $A$ is an $OA(N,k,s,1)$, then
\begin{equation}\label{eq:XorthvsUnb}
\left\|X(A,f)^TX(A,f)-\bbI\right\|_F\leq \frac{\sqrt{2\Unb_{2,2}(A)}}{\lambda s},
\end{equation}
where $X(A,f)\in \bbR^{N\times k}$ is given by \eqref{eq:Xmatrix}, $\|\,\cdot\,\|_F$ is the Frobenius norm and $\lambda=N/s^2$ the index (of strength two) of $A$. Now, let
\begin{equation}\label{eq:sigmas}
    \sigma_1(f):=\min_{\alpha}\frac{1}{s}\sum_{x\in S}|f(x)-\alpha|~\text{ and }~\sigma_2(f):=\min_{\alpha}\sqrt{\frac{1}{s}\sum_{x\in S}|f(x)-\alpha|^2}
\end{equation}
be the \emph{absolute deviation} and \emph{typical deviation} of $f$ (with respect the uniform probability measure on $S$), respectively. Then
\begin{equation}\label{eq:XorthvsTolgen}
    \left\|X(A,f)^TX(A,f)-\bbI\right\|_{\max}\leq \frac{1}{\lambda}\left(\frac{\sigma_1(f)}{\sigma_2(f)}\right)^2\Tol_2(A)\leq \frac{1}{\lambda}\Tol_2(A)
\end{equation}
where $\|\,~\,\|_{\max}$ is the max-norm.
\end{thm}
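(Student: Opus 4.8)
The plan is to analyze the Gram matrix $M:=X(A,f)^TX(A,f)$ entrywise and then convert entrywise estimates into bounds for the two norms. First I would record the two consequences of the strength-one hypothesis. Because $A$ is an $\OA(N,k,s,1)$, every level occurs exactly $N/s$ times in each column, so $\sum_{i=1}^N f(A_{i,j})^2=\frac{N}{s}\sum_{x\in S}f(x)^2=N\sigma_2(f)^2$ for every column $j$; here I use that \eqref{eq:fnullaverage} makes $0$ the $L^2$-minimizer, whence $\frac1s\sum_{x\in S}f(x)^2=\sigma_2(f)^2$. In particular the normalizing denominator in \eqref{eq:Xmatrix} is the same constant $\sqrt{N}\,\sigma_2(f)$ for all columns, the diagonal of $M$ consists of ones, and $M-\bbI$ has zero diagonal.

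Next I would write each off-diagonal entry in terms of deviations. For a pair $(j,l)\in T_{2,k}$, setting $\delta_x:=n(A,x,(j,l))-N/s^2$ and expanding the inner product over the $s^2$ level-pairs, the main term $\frac{N}{s^2}\sum_{x\in S^2}f(x_1)f(x_2)$ vanishes by \eqref{eq:fnullaverage}, leaving
\[
M_{j,l}=\frac{1}{N\sigma_2(f)^2}\sum_{x\in S^2}\delta_x\,f(x_1)f(x_2).
\]
For the Frobenius bound I apply Cauchy--Schwarz to this sum, using $\sum_{x\in S^2}f(x_1)^2f(x_2)^2=\big(s\sigma_2(f)^2\big)^2$, to get $M_{j,l}^2\le \frac{s^2}{N^2}\sum_{x\in S^2}\delta_x^2$. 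Summing over all ordered pairs $j\ne l$ (each unordered pair counted twice, since $M$ is symmetric) recognizes $\sum_{(j,l)\in T_{2,k}}\sum_{x\in S^2}\delta_x^2=\Unb_{2,2}(A)$ and yields $\|M-\bbI\|_F^2\le \frac{2s^2}{N^2}\Unb_{2,2}(A)$; taking square roots and using $N=\lambda s^2$ gives \eqref{eq:XorthvsUnb}.

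The main obstacle is the max-norm bound, where a naive triangle inequality gives $|M_{j,l}|\le \frac{\Tol_2(A)}{N\sigma_2(f)^2}\big(\sum_{x\in S}|f(x)|\big)^2$, producing $\sum_{x\in S}|f(x)|$ rather than the smaller $s\sigma_1(f)=\min_\alpha\sum_{x\in S}|f(x)-\alpha|$, and is therefore too weak. The fix is the structural observation that strength one forces every row-sum and column-sum of the array $(\delta_x)_{x\in S^2}$ to vanish: summing $n(A,(x_1,x_2),(j,l))$ over $x_2$ recovers the strength-one count $N/s$, so $\sum_{x_2\in S}\delta_{(x_1,x_2)}=0$, and symmetrically in $x_1$. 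Consequently $\sum_{x}\delta_x f(x_1)=\sum_{x}\delta_x f(x_2)=\sum_{x}\delta_x=0$, so for arbitrary constants $\alpha,\beta$ one may replace $f(x_1)f(x_2)$ by $(f(x_1)-\alpha)(f(x_2)-\beta)$ without changing the sum. I would then bound it by $\Tol_2(A)\big(\sum_{x}|f(x)-\alpha|\big)\big(\sum_{x}|f(x)-\beta|\big)$ and minimize over $\alpha=\beta$, obtaining $|M_{j,l}|\le \frac{s^2\sigma_1(f)^2}{N\sigma_2(f)^2}\Tol_2(A)=\frac{1}{\lambda}\big(\sigma_1(f)/\sigma_2(f)\big)^2\Tol_2(A)$, which is the first inequality of \eqref{eq:XorthvsTolgen}. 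Finally, the trailing inequality follows from $\sigma_1(f)\le\sigma_2(f)$, the $L^1\le L^2$ (Cauchy--Schwarz/Jensen) inequality for the uniform measure: $\sigma_1(f)\le \frac1s\sum_{x}|f(x)|\le \sqrt{\frac1s\sum_{x}f(x)^2}=\sigma_2(f)$, using once more that $0$ is the $L^2$-minimizer.
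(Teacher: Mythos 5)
Your proposal is correct and follows essentially the same route as the paper's proof: expressing the off-diagonal Gram entries through the counting deviations $n(A,x,(j,l))-N/s^2$, applying Cauchy--Schwarz for the Frobenius bound, and using the strength-one marginal identities to recenter $f$ by an arbitrary constant before applying the triangle inequality with the tolerance for the max-norm bound. The only differences are cosmetic: the paper normalizes $\sum_{x\in S}f(x)^2=1$ at the outset and rescales at the end, while you carry $\sigma_2(f)$ explicitly, and you shift by two constants $\alpha,\beta$ where the paper uses a single $\alpha$.
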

\begin{remark}
By the definitions of $\mathrm{Ave}(s^2)$ and $s_{\max}$, we have that inequalities~\eqref{eq:XorthvsUnb} and~\eqref{eq:XorthvsTolgen} give bounds of these invariants in terms of the unbalance and the tolerance.
\end{remark}
\begin{remark}
We observe that the inequalities are only in one direction, because the columns of $X(A,f)$ are orthogonal if $A$ is an $OA(N,k,s,2)$ but not the other way around.
\end{remark}
\begin{remark}
Proposition~\ref{prop:sigmaquotient} below is to give an idea on the size of $\left(\sigma_1(f)/\sigma_2(f)\right)^2$ in Theorem~\ref{theo:DcriterionvsUnbalance} and Corollary~\ref{cor:DcriterionvsUnbalance}.
\end{remark}
\begin{cor}\label{cor:DcriterionvsUnbalance}
Let $A\in S^{N\times k}$ be an $N\times k$ array with entries in a set $S$ of size $s$ and non-constant $f:S\rightarrow\mathbb{R}$ such that \eqref{eq:fnullaverage}. Assume that (a) $A$ is an $OA(N,k,s,1)$, and (b) if we remove the last factor of $A$ we get an $OA(N,k-1,s,2)$ and if we remove some other $r$ factors, different from the last one, we get an $OA(N,k-r,s,2)$. If
\begin{equation}\label{eq:DcritvsTolcond}
   \frac{\sqrt{r}}{\lambda} \left(\frac{\sigma_1(f)}{\sigma_2(f)}\right)^2\Tol_2(A)<1,
\end{equation}
where $\lambda=N/s^2$ is the index (of strength two) of $A$ and $\sigma_1$ and $\sigma_2$ where given in~\eqref{eq:sigmas}; then
\begin{equation}\label{eq:DcritvsTol}
  \mathcal{D}_f(A)\geq \left(1-\frac{r}{\lambda^2}\left(\frac{\sigma_1(f)}{\sigma_2(f)}\right)^4\Tol_2(A)^2\right)^{\frac{1}{k}}.
\end{equation}
\end{cor}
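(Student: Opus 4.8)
The plan is to exploit the rigid structure that assumption (b) forces on the Gram matrix $M:=X(A,f)^TX(A,f)$. By Remark~\ref{rem:Dcritbound}, $M$ is positive semi-definite with all diagonal entries equal to $1$, so $E:=M-\bbI$ has a zero diagonal and $\mathcal{D}_f(A)=\det(M)^{1/k}$. First I would pin down which off-diagonal entries of $M$ vanish. Since the normalization in \eqref{eq:Xmatrix} is column-wise, the first $k-1$ columns of $X(A,f)$ coincide with $X(A',f)$, where $A'$ is $A$ with its last factor deleted; by (b), $A'$ is an $OA(N,k-1,s,2)$, so Proposition~\ref{prop:columnorthogonality} gives $X(A',f)^TX(A',f)=\bbI_{k-1}$, i.e.\ $M_{j,j'}=0$ for all $1\le j<j'\le k-1$. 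Likewise, the second clause of (b) supplies $r$ factors, none of them the last, whose deletion yields an $OA(N,k-r,s,2)$ still containing the last factor; there the last column is orthogonal to every surviving column, so $M_{j,k}=0$ for every $j<k$ outside this set of $r$ removed factors.

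Thus, writing $v\in\bbR^{k-1}$ for the vector $v_j:=M_{j,k}$, which has at most $r$ nonzero entries, the matrix $M$ is the bordered matrix
\begin{equation*}
M=\begin{pmatrix}\bbI_{k-1}&v\\ v^T&1\end{pmatrix}.
\end{equation*}
The second step is to compute its determinant. Using the Schur complement of the top-left block (or, equivalently, noting that $E$ has eigenvalues $\pm\|v\|$ together with $0$ of multiplicity $k-2$), one obtains the clean identity $\det(M)=1-\|v\|^2=1-\sum_{j<k}M_{j,k}^2$, so that $\mathcal{D}_f(A)=(1-\|v\|^2)^{1/k}$.

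It then remains to bound $\|v\|$. Since $A$ is an $OA(N,k,s,1)$ by (a), the max-norm estimate \eqref{eq:XorthvsTolgen} of Theorem~\ref{theo:DcriterionvsUnbalance} applies and bounds each entry by $|M_{j,k}|\le \tfrac1\lambda(\sigma_1(f)/\sigma_2(f))^2\Tol_2(A)$. As at most $r$ of the $v_j$ are nonzero, summing gives $\|v\|^2\le \tfrac{r}{\lambda^2}(\sigma_1(f)/\sigma_2(f))^4\Tol_2(A)^2$, equivalently $\|v\|\le \tfrac{\sqrt r}{\lambda}(\sigma_1(f)/\sigma_2(f))^2\Tol_2(A)$, which is strictly less than $1$ precisely by hypothesis \eqref{eq:DcritvsTolcond}. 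Hence $\det(M)=1-\|v\|^2$ is positive and at least $1-\tfrac{r}{\lambda^2}(\sigma_1(f)/\sigma_2(f))^4\Tol_2(A)^2$; taking the monotone $k$-th root yields \eqref{eq:DcritvsTol}.

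The entrywise estimate and the final monotonicity are routine. The step requiring care—and the main obstacle—is reconciling the two clauses of (b) to establish the arrow/bordered shape of $M$ and to count correctly that at most $r$ off-diagonal entries survive; once that structure is in hand, the determinant identity $\det(M)=1-\|v\|^2$ makes everything else immediate.
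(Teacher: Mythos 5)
Your proposal is correct and takes essentially the same approach as the paper's proof: assumption (b) forces the bordered Gram matrix $X(A,f)^TX(A,f)=\left(\begin{smallmatrix}\bbI&v\\v^T&1\end{smallmatrix}\right)$ (up to ordering of the factors) with $v$ having at most $r$ nonzero entries, each bounded via the max-norm estimate \eqref{eq:XorthvsTolgen} of Theorem~\ref{theo:DcriterionvsUnbalance}, and the result follows from $\det\left(X(A,f)^TX(A,f)\right)=1-\|v\|^2$ together with condition \eqref{eq:DcritvsTolcond}. The only immaterial difference is that you derive the determinant identity yourself by a Schur complement (or eigenvalue) computation, whereas the paper cites \cite[Proposition 1]{brentosbornsmith2015det} for it.
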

The following proposition gives a general idea regarding the range of values of $\sigma_1(f)/\sigma_2(f)$.
\begin{prop}\label{prop:sigmaquotient}
Let $S$ be a set of size $s$ and $f:S\rightarrow\mathbb{R}$ a non-constant function such that \eqref{eq:fnullaverage}. Then
\begin{equation}\label{eq:sigmaquotient}
    \frac{1}{s-1}\leq \left(\frac{\sigma_1(f)}{\sigma_2(f)}\right)^2\leq 1.
\end{equation}
Moreover, both inequalities are tight.
\end{prop}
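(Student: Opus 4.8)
The plan is to first make the objects transparent. Both $\sigma_1(f)$ and $\sigma_2(f)$ are translation invariant (they are minimized over a free shift $\alpha$), so the ratio is unchanged under affine reparametrizations of $f$. Since the minimizer of $\alpha\mapsto\frac1s\sum_{x\in S}|f(x)-\alpha|^2$ is the mean $\frac1s\sum_{x} f(x)$, which vanishes by \eqref{eq:fnullaverage}, I would record at the outset the clean identity $\sigma_2(f)^2=\frac1s\sum_{x\in S}f(x)^2=\operatorname{Var}(f)$, the variance with respect to the uniform measure on $S$. This reduces the whole statement to comparing the minimal mean absolute deviation $\sigma_1(f)$ with $\operatorname{Var}(f)$.

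The upper bound $(\sigma_1(f)/\sigma_2(f))^2\le 1$ is the easy half: it is just $\lVert\cdot\rVert_{L^1}\le\lVert\cdot\rVert_{L^2}$ for a probability measure. Taking $\alpha=0$ in the definition of $\sigma_1$ and applying Cauchy--Schwarz (equivalently Jensen to the concave $\sqrt{\cdot}$),
\[
\sigma_1(f)\le \frac1s\sum_{x\in S}|f(x)|\le\sqrt{\frac1s\sum_{x\in S}f(x)^2}=\sigma_2(f).
\]

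The heart of the matter is the lower bound, equivalently $\operatorname{Var}(f)\le(s-1)\,\sigma_1(f)^2$. I would prove the pointwise-in-$\alpha$ statement $\operatorname{Var}(f)\le(s-1)\bigl(\frac1s\sum_{x}|f(x)-\alpha|\bigr)^2$ for every $\alpha\in\bbR$ and then minimize the right-hand side over $\alpha$, which turns the bracket into $\sigma_1(f)^2$. Writing $g(x)=f(x)-\alpha$ and splitting values by sign, set $P=\sum_{g(x)>0}g(x)\ge0$ and $M=\sum_{g(x)<0}(-g(x))\ge0$, so $\sum_x g(x)=P-M$ and $\sum_x|g(x)|=P+M$. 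The only inequality required is $\sum a_i^2\le(\sum a_i)^2$ for nonnegative reals, applied to the positive and the negative parts separately, giving $\sum_x g(x)^2\le P^2+M^2$. The algebraic identity
\[
(s-1)(P+M)^2+(P-M)^2=s(P^2+M^2)+2(s-2)PM
\]
then yields $s\sum_x g(x)^2-\bigl(\sum_x g(x)\bigr)^2\le(s-1)\bigl(\sum_x|g(x)|\bigr)^2$, and dividing by $s^2$ recovers $\operatorname{Var}(f)\le(s-1)\bigl(\frac1s\sum_x|g(x)|\bigr)^2$.

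The subtle point, and the step where a careless argument loses the constant, is that one must keep the variance rather than the raw second moment $\frac1s\sum_x(f(x)-\alpha)^2$: the subtracted term $(\bar f-\alpha)^2=\frac1{s^2}(\sum_x g(x))^2$ is exactly what converts the crude factor $s$ into the sharp $s-1$, as the nonnegative cross term $2(s-2)PM$ makes visible for $s\ge2$. For tightness I would exhibit the extremal functions: the lower bound $1/(s-1)$ is attained by the \emph{spike} taking the value $s-1$ once and $-1$ on the remaining $s-1$ points, for which $\sigma_1(f)=1$ and $\sigma_2(f)^2=s-1$; the upper bound $1$ is attained, for $s$ even, by the balanced two-level function taking $+1$ on half of $S$ and $-1$ on the other half, for which $\sigma_1(f)=\sigma_2(f)=1$. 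Both satisfy \eqref{eq:fnullaverage}, so the range in \eqref{eq:sigmaquotient} cannot be narrowed.
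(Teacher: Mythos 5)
Your proof is correct, and for the lower bound it takes a genuinely different route from the paper's. The upper bound is identical in both (the power-mean/Cauchy--Schwarz step). For the lower bound, the paper argues by contradiction through the machinery of Subsection~\ref{subsec:comparative}: if $(\sigma_1(f)/\sigma_2(f))^2<1/(s-1)$, then the trivial repeated-column array $A$ of Proposition~\ref{prop:trivialconstruction} (which has $\Tol_2(A)\le\lambda(s-1)$ and fits Corollary~\ref{cor:DcriterionvsUnbalance} with $r=1$) would satisfy \eqref{eq:DcritvsTolcond}, forcing $\mcD_f(A)>0$; but $\mcD_f(A)=0$ because $A$ has two equal columns. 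You instead prove the equivalent statement $\operatorname{Var}(f)\le (s-1)\,\sigma_1(f)^2$ directly, via the positive/negative-part split $P,M$, the inequality $\sum_{x} g(x)^2\le P^2+M^2$, and the identity $(s-1)(P+M)^2+(P-M)^2=s(P^2+M^2)+2(s-2)PM$; your argument is elementary and self-contained, does not depend on Theorem~\ref{theo:DcriterionvsUnbalance} or the determinant bound behind Corollary~\ref{cor:DcriterionvsUnbalance}, and makes explicit where the sharp constant $s-1$ comes from (keeping the variance correction $(\sum_x g(x))^2$ and discarding the nonnegative cross term $2(s-2)PM$), whereas the paper's proof is shorter given the machinery already in place and highlights the interaction between the analytic invariant and the array constructions. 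On tightness you are also more careful: the paper's recipe for the upper bound (any $f:[s]\to\{-1,0,1\}$ with $\#f^{-1}(-1)=\#f^{-1}(1)$) fails whenever $f^{-1}(0)\neq\emptyset$ --- e.g.\ for $s=3$ and values $1,0,-1$ one gets $(\sigma_1/\sigma_2)^2=2/3$, and in fact equality $\sigma_1(f)=\sigma_2(f)$ forces $|f|$ to be constant with balanced signs, hence $s$ even --- which is exactly the balanced $\pm1$ example you give.
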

\begin{remark}
In the case described by Corollary~\ref{cor:DcriterionvsUnbalance}, we have, by Rao's bound~\cite[Theorem~2.1]{heslst1999}, that $k\leq \lambda (s+3)$. Thus $\sqrt{r}\leq 2\sqrt{\lambda(s-1)}$ and so, to make Corollary~\ref{cor:DcriterionvsUnbalance} usable, we need
\begin{equation}\label{eq:userangeofsigmas}
    \left(\frac{\sigma_1(f)}{\sigma_2(f)}\right)^2\leq \frac{1}{(s-1)^{\frac12+\varepsilon}}
\end{equation}
for some $\varepsilon>0$. Otherwise, the condition would not be satisfied at all---note that this is within the range of values given by Proposition~\ref{prop:sigmaquotient}.

Unfortunately, in the common case in which the values of $f:S\rightarrow \bbR$ form an arithmetic sequence of length $s$, we have $2/3\leq \left(\sigma_1(f)/\sigma_2(f)\right)^2\leq 1$ with $\lim_{s\to\infty}\left(\sigma_1(f)/\sigma_2(f)\right)^2=3/4$. Hence Corollary~\ref{cor:DcriterionvsUnbalance} is unusable in this case.
\end{remark}
\begin{remark}
Note that the condition~\eqref{eq:DcritvsTolcond} does not hold for the trivial construction of Proposition~\ref{prop:trivialconstruction} below for any $f$. Moreover, for the array $A$ of Proposition~\ref{prop:trivialconstruction} below, we indeed have $\mcD_f(A)=0$, since $X(A,f)$ has repeated columns.
\end{remark}
Later on, Xu~\cite{xu2002algorithm} proposed another measure of non-orthogonality $J_2$. However, to achieve a purely combinatorial relaxation of OAs, we have to choose the weights in~\cite[\S2.1]{xu2002algorithm} uniformly equal to $s$. In this way, due to \cite[Lemma~2]{xu2002algorithm}, we would get the following definition for OAs of strength one---the ones for which Xu~\cite{xu2002algorithm} considers $J_2$.

\begin{dfn}\cite[Lemma~2]{xu2002algorithm}
Let $A\in S^{N\times k}$ be an $N\times k$ array with entries in a set $S$ of size $s$ such that $A$ is an $OA(N,k,s,1)$ and $f:S\rightarrow \bbR$ such that \eqref{eq:fnullaverage}. The \emph{$J_2$-value of $A$} is
\begin{equation}
    J_2(A):=\frac{N^2}{2}\left\|X(A,f)^TX(A,f)-\bbI\right\|^2_F+\frac{N}{2}\left(Nk(k-1)+Nks-k^2s^2\right)
\end{equation}
where $X(A,f)$ is given by \eqref{eq:Xmatrix}, $\bbI$ the identity matrix and $\|\,\cdot\,\|_F$ is the Frobenius norm.
\end{dfn}

In this way, due to Theorem~\ref{theo:DcriterionvsUnbalance}, $J_2$ is also related to the unbalance.

\subsection{Combinatorial relaxations}

In the more combinatorial setting, the relaxations are based on measures of non-or\-tho\-go\-na\-li\-ty that recover OAs in the limit case. As we will show, all these measures are equivalent to the unbalance and the tolerance.

In the more general setting of mixed arrays, Ma, Fang and Liski~\cite{ma2000new} introduced the $(\phi,\theta)$ criterion, which generalizes the $p$-unbalance of strength $2$. This notion was important, because it unified all previous relaxations in the literature: the $E(s^2)$ of Booth and Cox~\cite{booth1962} in the case $s=2$, the $\chi^2$ of Yamada and Lin~\cite{yamadalin2002} for $s=3$ and of Yamada and Matsui~\cite{yamadamatsui2002} for general $s$, the $\mathrm{Ave}(|f|)$ and $\mathrm{Ave}(f^2)$ of Fang, Lin and Ma~\cite{fang2000}, and the $B(2)$ of Lu, Li and Xie~\cite{lu2006class} (also in~\cite{UDpaper2}). In this way, the $(\phi,\theta)$ criterion provided the best way of defining nearly-orthogonal arrays

\begin{dfn}\cite{ma2000new}\label{defi:D1D2}
Let $A\in S^{N\times k}$ be an $N\times k$ array with entries in a set $S$ of size $s$, and $\phi:[0,\infty)\rightarrow [0,\infty)$ and $\theta:[0,\infty)\rightarrow [0,\infty)$ monotonic increasing functions such that $\phi(0)=\theta(0)=0$. The \emph{$(\phi,\theta)$ criterion for the non-orthogonality of $A$} is the quantity
\begin{equation}
    D_{\phi,\theta}(A):=\sum_{j\in T_{2,k}}\theta\left(\sum_{x\in S^2} \phi\left(\vert n(A,x,j)-N/s^2\vert\right)\right).
\end{equation}
In particular, the \emph{$D_1$ and $D_2$ criteria for the non-orthogonality of $A$} are
\begin{equation}
    D_1(A):=\binom{k}{2}^{-1}D_{p_1,\mathrm{id}}(A)~\text{ and }~D_2(A):=\binom{k}{2}^{-1}D_{p_2,\mathrm{id}}(A)
\end{equation}
where $\mathrm{id}$ denotes the identity and $p_{\alpha}$ the map $x\mapsto p_{\alpha}(x):=x^{\alpha}$.
\end{dfn}

Despite the generality of their $(\phi,\theta)$ criterion, Ma, Fang and Liski~\cite{ma2000new} focused on the $D_1$ and $D_2$ criteria. However, these equal to the unbalance up to a multiplicative factor. More generally, the studied cases of the $(\phi,\theta)$ criterion have been only those in which $\phi$ and $\theta$ are of the form $x\mapsto x^\alpha$, with $\alpha\geq 1$. All these cases relate nicely to the unbalance.

\begin{prop}\label{prop:phithetacriterionvsunbalance}
Let $A\in S^{N\times k}$ be an $N\times k$ array with entries in a set $S$ of size $s$, $\alpha,\beta\geq 1$ and let $D_{\alpha,\beta}$ denote $D_{p_{\alpha},p_{\beta}}$ where $p_\tau:x\mapsto x^\tau$. Then
\begin{equation}
    \Unb_{\alpha\beta,2}(A)\leq D_{\alpha,\beta}(A)\leq s^{2(\beta-1)}\Unb_{\alpha\beta,2}(A).
\end{equation}
In particular,
\begin{equation}
    D_1(A)=\binom{k}{2}^{-1}\Unb_{1,2}(A)~\text{ and }~D_2(A)=\binom{k}{2}^{-1}\Unb_{2,2}(A).
\end{equation}
\end{prop}

Because of this proposition, the unbalance captures all important cases of the $(\phi,\theta)$ criterion, including the ones it generalizes, while having a simpler form.

Being more explicit, the $E(s^2)$ of Booth and Cox~\cite{booth1962} is $4D_2$; the $\chi^2$ of Yamada and Lin~\cite{yamadalin2002} and of Yamada and Mautsi~\cite{yamadamatsui2002} is $\frac{s^2}{N}D_2$; the $\mathrm{Ave}(|f|)$ and $\mathrm{Ave}(f^2)$ of Fang, Lin and Ma~\cite{fang2000} are, respectively, $D_1$ and $\binom{k}{2}^{-1}D_{1,2}$; and the $E(f_{NOD})$ of \cite{UDpaper10,UDpaper13} is exactly $D_2$. All of these, by Proposition~\ref{prop:phithetacriterionvsunbalance}, can be reduced to the unbalance.

We should mention very specially the \emph{balance} $B(t)$ introduced by Lu, Li and Xie~\cite{lu2006class}. In general, we have that
\begin{equation}
    B(t)=\binom{k}{t}^{-1}\Unb_{2,t},
\end{equation}
so their definition is almost identical to ours. However, we believe that the term unbalance captures better this quantity. Moreover, the notion of balance seems to play a role in quadrature discrepancies, as we will show later in Subsection~\ref{subsec:quadrature}.

Finally, the recent work by Lin, Phoa and Kao~\cite{linphoakao2017} introduced the term of ``almost-orthogonal array''. Our definition of AOA (Definition~\ref{def:aoa}) is closely related to their notion although they use the notion of bandwidth instead of the tolerance.

\begin{dfn}
Let $A\in S^{N\times k}$ be an $N\times k$ array with entries in a set $S$ of size $s$ and $t\in\bbN$. The \emph{bandwidth of strength $t$ of $A$} is 
\begin{equation}
    \mathrm{BW}_t(A):=\max\left\{\left|n(A,x,i)-n(A,y,j)\right|\mid x,y\in S^t,\,i,j\in T_{t,k}\right\}.
\end{equation}
\end{dfn}

We can see easily that the bandwidth is nothing more than the tolerance up to a fixed constant factor.

\begin{prop}\label{prop:bwvstol}
Let $A\in S^{N\times k}$ be an $N\times k$ array with entries in a set $S$ of size $s$ and $t\in\bbN$. Then
\begin{equation}
    \Tol_t(A)\leq \mathrm{BW}_t(A)\leq 2\Tol_t(A).
\end{equation}
\end{prop}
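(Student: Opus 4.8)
The plan is to prove both inequalities in Proposition~\ref{prop:bwvstol} directly from the definitions of $\Tol_t$ and $\mathrm{BW}_t$, using the fact that the deviations $n(A,x,j)-N/s^t$ measure distance from the common target value $N/s^t$. The key observation is that the bandwidth compares pairs of counts to each other, whereas the tolerance compares each count to the fixed center $N/s^t$; these are linked by the triangle inequality.

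\medskip

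For the lower bound $\Tol_t(A)\leq \mathrm{BW}_t(A)$, I would first recall that the count function always averages to the target: since $\sum_{x\in S^t} n(A,x,j)=N$ for every fixed $j\in T_{t,k}$, averaging over the $s^t$ tuples gives $N/s^t$ as the mean of the values $n(A,x,j)$. Consequently, for any fixed column selection $j$, some tuple $x$ attains $n(A,x,j)\geq N/s^t$ and some tuple $y$ attains $n(A,y,j)\leq N/s^t$. Now let $x_0,j_0$ realize the tolerance, so $|n(A,x_0,j_0)-N/s^t|=\Tol_t(A)$. Using the same column selection $j_0$, I pick a tuple on the opposite side of the mean (a $y$ with $n(A,y,j_0)\leq N/s^t$ if $n(A,x_0,j_0)\geq N/s^t$, or vice versa). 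Then
\[
|n(A,x_0,j_0)-n(A,y,j_0)|\geq |n(A,x_0,j_0)-N/s^t|=\Tol_t(A),
\]
because subtracting $N/s^t$ from both counts moves them apart (they lie on opposite sides of the center). Since the left-hand side is of the form that $\mathrm{BW}_t(A)$ maximizes over, we conclude $\Tol_t(A)\leq \mathrm{BW}_t(A)$.

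\medskip

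For the upper bound $\mathrm{BW}_t(A)\leq 2\Tol_t(A)$, I would take arbitrary $x,y\in S^t$ and $i,j\in T_{t,k}$ and simply insert and subtract the center:
\[
|n(A,x,i)-n(A,y,j)|\leq |n(A,x,i)-N/s^t|+|N/s^t-n(A,y,j)|\leq 2\Tol_t(A),
\]
where both terms are bounded by $\Tol_t(A)$ by its definition. Taking the maximum over all $x,y,i,j$ yields $\mathrm{BW}_t(A)\leq 2\Tol_t(A)$.

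\medskip

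The main obstacle is the lower bound rather than the upper one. The upper bound is a one-line triangle inequality, but the lower bound requires the averaging argument to guarantee that, \emph{within the same column selection} $j_0$, there exists a second tuple on the opposite side of the center. Without fixing the column selection and without the averaging fact $\sum_{x} n(A,x,j)=N$, one could not ensure the two deviations reinforce rather than partially cancel. I would be careful to state this averaging identity explicitly (it follows since every run contributes its projected $t$-tuple to exactly one count among the $s^t$ tuples), as it is the only nontrivial ingredient in the whole proof.
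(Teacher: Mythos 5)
Your proof is correct, and while your upper bound is exactly the paper's (insert $N/s^t$ and apply the triangle inequality, then maximize), your lower bound takes a genuinely different route. The paper writes $N/s^t$ as the \emph{grand} average $\frac{1}{\binom{k}{t}s^t}\sum_{j\in T_{t,k}}\sum_{y\in S^t}n(A,y,j)$, pulls the absolute value inside the double sum, and bounds each term $|n(A,x,i)-n(A,y,j)|$ by $\mathrm{BW}_t(A)$; this is an averaging/convexity argument that compares the extremal count against \emph{all} counts across all column selections. You instead use the \emph{local} identity $\sum_{x\in S^t}n(A,x,j_0)=N$ for the single column selection $j_0$ realizing the tolerance, deduce that some tuple $y$ sits on the opposite side of the center $N/s^t$, and observe that two counts on opposite sides of the center are at least as far from each other as either is from the center. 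Both arguments are sound one-liners resting on the same counting fact \eqref{eq:countingsum}; yours is slightly more constructive and proves marginally more, namely that the bandwidth restricted to pairs with $i=j$ (a single column selection) already dominates $\Tol_t(A)$, whereas the paper's version needs the full range of pairs $(i,j)$ in its averaging. Your caveat about needing the opposite-side tuple within the \emph{same} $j_0$ is exactly the right point: without it the two deviations could partially cancel and the extremal argument would fail.
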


However, the notion of AOA in Lin, Phoa and Kao~\cite{linphoakao2017} is not equivalent to ours, since they require stronger conditions: they require the array to be circulant. The latter makes the $n(A,x,j)$ not to depend on $j$, but can be very restrictive of a condition depending on the setting.

\subsection{Comparison with quadrature discrepancy measures}\label{subsec:quadrature}

In numerical integration, we want to find families of points $\mcX\subseteq \bbX$ of a size $N$ for which
\begin{equation}
    \mathrm{Q}_{\mcX}(\varphi):=\frac{1}{N}\sum_{x\in \mcX}\varphi(x).
\end{equation}
gives a good approximation of the integral $\int_{\mcX}\,\varphi(x)\,\mathrm{d}\bbP(x)$ with respect some probability measure $\bbP$ on $\mcX$. In this context, two issues arises: How do we obtain such families of points? How do we control the error of such a measure?

After some precedent work by Wang and Fang~\cite{wangfang1981}, Hickernell~\cite{hickernell1998,hickernell1998b} (cf.~\cite{hickernell2000}) attacked this problem obtaining several notions of discrepancy that allow one to control the quadrature error. Later on, based on Hickernell's method, new notions of discrepancy were introduced by Zhou, Fang and Ning~\cite{zhou2013}, Ke, Zhang and Ye~\cite{ke2015} and Hickernell and Liu~\cite{UDpaper8hickernellliu,UDpaper8liuhickernell}. 

Given a probability space $\bbX$ with probability measure $\bbP$ and a reproducing kernel $\mcK:\bbX\times \bbX\rightarrow \mcR$ with respect the inner product $\langle\,\cdot\,,\,\cdot\,\rangle_{\mcK}$ on $L^2(\bbX)$, we can consider the following discrepancy measure for a collection of (not necessarily distinct) points $\mcX\subset \bbX$:
\begin{equation}
D_2(\mcX;\bbX,\mcK):=\left(\int_{\bbX\times \bbX}\mcK(x,y)\,\mathrm{d}(\bbP-\bbP_{\mcX})(x)\mathrm{d}(\bbP-\bbP_{\mcX})(y)\right)^{\frac{1}{2}}
\end{equation}
where $\bbP_{\mcX}$ is the uniform probability measure on $\mcX$. For such a discrepancy measure, we have the following version of the Koksma-Hlawka inequality:
\begin{equation}
    \left|\int_{\mcX}\,\varphi(x)\,\mathrm{d}\bbP(x)-\mathrm{Q}_{\mcX}(\varphi)\right|\leq D_2(\mcX;\bbX,\mcK)\omega_2(\varphi;\bbX,\mcK)
\end{equation}
where $\varphi\in L^2(\mcX)$ and $\omega_2(\varphi;\mcX,\mcK):=\min_{c\in\bbR}\|\varphi-c\|_{\mcK}$. See \cite{hickernell2000} for all the details.

Depending on the kernel considered, we get three discrepancy measures for $[0,1]^k$ and one discrepancy measure for $[s]^k$.

\begin{dfn}\label{defi:CDWDMD}
Let $\mcX\subset [0,1]^k$ be a collection of (not necessarily distinct) points.
\begin{enumerate}
    \item[(CD)] The \emph{centralized $L_2$-discrepancy, CD, of $\mcX$} is $CD(\mcX):=D_2(\mcX;[0,1]^k,\mcK_C)$ where
    \begin{equation}
        \mcK_C(x,y):=\prod_{j=1}^k\left(1-\frac{1}{2}\left|x_j-\frac{1}{2}\right|-\frac{1}{2}\left|y_j-\frac{1}{2}\right|-\frac{1}{2}|x_j-y_j|\right).
    \end{equation}
    \item[(WD)] The \emph{wrap-around $L_2$-discrepancy, WD, of $\mcX$} is $WD(\mcX):=D_2(\mcX,\;[0,1]^k,\mcK_W)$ where
    \begin{equation}
        \mcK_W(x,y):=\prod_{j=1}^k\left(\frac{3}{4}-|x_j-y_j|+|x_j-y_j|^2\right).
    \end{equation}
    \item[(MD)] The \emph{mixture $L_2$-discrepancy, MD, of $\mcX$} is $MD(\mcX):=D_2(\mcX;[0,1]^k,\mcK_M)$ where
    \begin{equation}
        \mcK_M(x,y):=\prod_{j=1}^k\left(\frac{15}{8}-\frac{1}{4}\left|x_j-\frac{1}{2}\right|-\frac{1}{4}\left|y_j-\frac{1}{2}\right|-\frac{3}{4}|x_j-y_j|+\frac{1}{2}|x_j-y_j|^2\right).
    \end{equation}
\end{enumerate}
\end{dfn}
\begin{dfn}\label{dfn:DD}
Let $\mcX\subset [s]^N$ be a collection of (not necessarily distinct) points.
The \emph{discrete discrepancy, DD, of $\mcX$} is $DD(\mcX;a,b):=D_2(\mcX;[s]^k,\mcK_{D;a,b})$ where
\begin{equation}
    \mcK_{D;a,b}(x,y):=a^{\#\{j\in [k]\mid x_j=y_j\}}b^{k-\#\{j\in [k]\mid x_j=y_j\}}
\end{equation}
and $a>b>0$. 
\end{dfn}
\begin{remark}
Note that CD, WD and MD are discrepancy measures for quadratures over $[0,1]^k$, while DD is a discrepancy measure for quadratures over $[s]^k$. In a certain sense, we can see DD also as a discrepancy measure over $[0,1]^k$, if we see as telling us how much does a subcollection of points of the uniform grid differ from the whole uniform grid.
\end{remark}
\begin{remark}
Although we are giving CD, WD, MD and DD in terms of their kernels, it is possible to give explicit formulas for them in terms of the collection of points $\mcX$ directly. For CD, WD and MD, see \cite[(9), (10) \& (18)]{zhou2013} for the explicit formulas; for DD, see below.
\end{remark}

Now, a common way to find to find collections of points in $[0,1]^k$ is to consider arrays $A\in [s]^{N\times k}$ with $N$ runs, $k$ factors and $s$ levels $1,\ldots,s$, and associate to them the following indexed family of points
\begin{equation}\label{eq:integrationpoints}
    \mcP(A):=\left\{\begin{pmatrix}\frac{2A_{i,j}-1}{2s}\end{pmatrix}_{i=1}^N\in [0,1]^N\mid j\in [k]\right\}.
\end{equation}
Using this identification---which is sort of arbitrary---of arrays with collections of points, we have that we can reduce the problem of looking for points to average over to a combinatorial search of arrays for which the discrepancy measures are small. We will abuse notation, and write $CD(A)$, $MD(A)$ and $WD(A)$ instead of $CD(\mcP(A))$, $MD(\mcP(A))$ and $WD(\mcP(A))$. Observe that for $DD$, we can just write $DD(A)$ as we can see the runs of $A$ as a collection of $N$ points in $[s]^k$.

Because of the above, it has been common to focus on arrays in order to find collections of points minimizing CD, MD and WD. These techniques have been later extended to the latter DD. In general, we have two families of searches: those based on the threshold accepting heuristics~\cite{UDpaper1,UDpaper2,UDpaper3,fang2005,ke2015}, and those based on combinatorial constructions~\cite{UDpaper9,UDpaper10,UDpaper11,UDpaper14}. Only recently, there has been a motivation for non-array collections of points using gradient descent~\cite{UDpaper14}.

Intuitively, the reason why OA and, therefore also AOAs, enter into play for quadrature discrepancy estimate can be seen as follows. Let $A$ be an $OA(N,k,s,t)$, then we have that for any projection
$
    x\mapsto x_J
$
onto the coordinates indexed by $J\subseteq [k]$, the family
$
\mcP(A)_J:=\{p_J\mid p\in \mcP(A)\}\subset [0,1]^J
$
would just be uniform grid 
$
\left\{\frac{1}{2}+\frac{a}{s}\mid a\in [s]\right\}^J
$
with every point repeated the same number of times (precisely, $N/s^{|J|}$). In other words, if $\varphi$ depends only on $c\leq t$ coordinates, then averaging over $\mcP(A)$ would be the result of applying the quadrature rule resulting of the uniform grid.

The above can be made explicit as follows.

\begin{thm}\label{thm:discrepancyvsunbalance}
Let $A\in [s]^{N\times k}$ be an array with $N$ runs, $k$ factors and $s$ levels, and $a>b>0$. Then:
\begin{align}
    DD(A;a,b)^2&=\frac{1}{N^2}\sum_{t=1}^k\Unb_{2,t}(A)(a-b)^tb^{k-t}\label{eq:discrepancyvsunbalance1}\\
    &=-\left(\frac{a-b}{s}+b\right)^k+\frac{b^k}{N^2}\sum_{1\leq r,\tilde{r}\leq N}\left(\frac{a}{b}\right)^{\HammingSimilarity(A;r,\tilde{r})}.\label{eq:discrepancyvsunbalance2}
\end{align}
Moreover, we have the following:
\begin{enumerate}[wide, labelwidth=!, labelindent=0pt]
    \item[(CD)] For $s>2$,
    \begin{equation}
        CD(A)^2\leq \left(\frac{13}{12}\right)^k-2\left(\frac{9s^2-1}{8s^2}\right)^k+\left(\frac{3s^2-6s+2}{2s^2}\right)^k+DD\left(A;\frac{3s-1}{2s},\frac{3s-3}{2s}\right)^2;
    \end{equation}
    and, for $s=2$,
    \begin{equation}
        CD(A)^2=\left(\frac{13}{12}\right)^k-2\left(\frac{35}{32}\right)^k+\left(\frac{9}{8}\right)^k+DD\left(A;\frac{5}{4},1\right)^2.
    \end{equation}
    \item[(WD)] For $s\geq 2$,
    \begin{equation}
        WD(A)^2\leq -\left(\frac{4}{3}\right)^k+\left(\frac{3s^3-2s^2+4s-2}{2s^3}\right)^k+DD\left(A;\frac{3}{2},\frac{3s^2-2s+2}{2s^2}\right)^2,
    \end{equation}
    with equality if $s\leq 3$.
    \item[(MD)] For $s$ odd,
    \begin{multline}
        MD(A)^2\leq\left(\frac{19}{12}\right)^k-\frac{2}{N}\left(\frac{71s^2+12s-3}{48s^2}\right)^k\\+\left(\frac{15s^3-8s^2+12s-4}{8s^3}\right)^k+DD\left(A;\frac{15}{8},\frac{15s^2-8s+4}{8s^2}\right)^2;
    \end{multline}
    and, for $s$ even,
    \begin{multline}
        MD(A)^2\leq\left(\frac{19}{12}\right)^k-\frac{2}{N}\left(\frac{71s^2+12s-3}{48s^2}\right)^k\\+\left(\frac{15s^3-8s^2+10s-4}{8s^3}\right)^k+DD\left(A;\frac{15}{8}-\frac{1}{2s},\frac{15s^2-8s+4}{8s^2}\right)^2;
    \end{multline}
    with equality if $s=2$.
\end{enumerate}
\end{thm}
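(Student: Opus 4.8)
The plan is to handle the identity for $DD(A;a,b)^2$ first, since the three discrepancy bounds are comparisons against it. Writing $\mu:=\bbP-\bbP_{\mcX}$ with $\mcX$ the $N$ runs of $A$ viewed as points in $[s]^k$, I would expand $DD(A;a,b)^2=\int\!\int \mcK_{D;a,b}\,\mathrm{d}\mu\,\mathrm{d}\mu$ into the usual three pieces. The pure term $\int\!\int\mcK_{D;a,b}\,\mathrm{d}\bbP\,\mathrm{d}\bbP$ factorizes over coordinates because $\mcK_{D;a,b}(x,y)=\prod_{j}\big(a\bbone[x_j=y_j]+b\bbone[x_j\neq y_j]\big)$, giving $\big(\tfrac{a-b}{s}+b\big)^k$; the cross term $\int\!\int\mcK_{D;a,b}\,\mathrm{d}\bbP\,\mathrm{d}\bbP_{\mcX}$ factorizes identically and, crucially, is independent of the fixed run, so it equals the same $\big(\tfrac{a-b}{s}+b\big)^k$; and the empirical term is $\tfrac{b^k}{N^2}\sum_{r,\tilde r}(a/b)^{\HammingSimilarity(A;r,\tilde r)}$. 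Combining these with weights $1-2+1$ yields \eqref{eq:discrepancyvsunbalance2} directly.

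To pass from \eqref{eq:discrepancyvsunbalance2} to \eqref{eq:discrepancyvsunbalance1}, I would expand $(a/b)^{\HammingSimilarity}=\sum_{t=0}^{k}\binom{\HammingSimilarity}{t}\big(\tfrac{a-b}{b}\big)^t$ (valid since $0\le\HammingSimilarity\le k$), interchange the sums, and invoke Theorem~\ref{thm:HamminSimilarityvsUnbalance} to rewrite $\sum_{r,\tilde r}\binom{\HammingSimilarity(A;r,\tilde r)}{t}$ as $\Unb_{2,t}(A)+\binom{k}{t}N^2/s^t$. The contribution of the $\binom{k}{t}N^2/s^t$ part reassembles, via the binomial theorem, into exactly $\big(\tfrac{a-b}{s}+b\big)^k$ and cancels the leading term of \eqref{eq:discrepancyvsunbalance2}; the $t=0$ term drops because $\Unb_{2,0}(A)=0$, leaving $\tfrac1{N^2}\sum_{t=1}^k\Unb_{2,t}(A)(a-b)^tb^{k-t}$, which is \eqref{eq:discrepancyvsunbalance1}. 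This half is essentially bookkeeping once Theorem~\ref{thm:HamminSimilarityvsUnbalance} is available.

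For CD, WD and MD I would start from the explicit point-set formulas for $D_2(\mcP(A);\ldots)^2$ (as in~\cite{zhou2013}), each of the shape $C_1-2C_2+C_3$ where $C_1$ is a constant, $C_2=\tfrac1N\sum_i\prod_j h_\bullet(x_{i,j})$ is the cross term, and $C_3=\tfrac1{N^2}\sum_{i,i'}\prod_j g_\bullet(x_{i,j},x_{i',j})$ is the empirical term, all evaluated at the grid points $x_{i,j}=\tfrac{2A_{i,j}-1}{2s}$ of \eqref{eq:integrationpoints}. On this grid every per-coordinate quantity depends on the levels only through $|x-y|=|a-a'|/s$ and $|x-\tfrac12|$, so $g_\bullet$ takes one diagonal value (at $a=a'$) and finitely many off-diagonal values. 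The key estimate is that, over the grid, $g_\bullet\le a_\bullet$ on the diagonal and $g_\bullet\le b_\bullet$ off it, where $a_\bullet,b_\bullet$ are the advertised DD-parameters; since all factors are positive this multiplies up to $\prod_j g_\bullet\le a_\bullet^{\HammingSimilarity}b_\bullet^{\,k-\HammingSimilarity}$, whence $C_3\le DD(A;a_\bullet,b_\bullet)^2+\big(\tfrac{a_\bullet-b_\bullet}{s}+b_\bullet\big)^k$ by \eqref{eq:discrepancyvsunbalance2}. For WD the kernel carries no $|x-\tfrac12|$ terms, so $h_W$ is constant, $C_1=C_2=(4/3)^k$ and hence $C_1-2C_2=-(4/3)^k$; the residual continuous term $\big(\tfrac{a_W-b_W}{s}+b_W\big)^k$ is exactly the stated $\big(\tfrac{3s^3-2s^2+4s-2}{2s^3}\big)^k$, and the WD bound falls out immediately.

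The remaining work, and the main obstacle, is twofold. First, I must confirm that the advertised $a_\bullet,b_\bullet$ really are the extreme grid values of $g_\bullet$: this is a short one-variable optimization of $g_\bullet$ in the gap $|a-a'|$ (and, for CD and MD, also in $|x-\tfrac12|$), with the off-diagonal maximum occurring at the smallest gap, pinned near an extreme level for CD and MD. Second, and more delicately, for CD and MD the cross factor $h_\bullet$ and the diagonal of $g_\bullet$ genuinely depend on $|x-\tfrac12|$, so $C_2$ is not constant; producing the stated \emph{array-independent} constants out of $C_1-2C_2$ requires bounding these level-dependent quantities by their grid extrema and matching them against the residual continuous term, which is the fiddly part of the argument. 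Finally, the equality assertions come from tracking when $g_\bullet\le b_\bullet$ is tight in every coordinate: this forces a single off-diagonal value, which happens precisely for $s\le 3$ in the WD case (the gaps $1$ and $s-1$ give equal values) and for $s=2$ in the CD and MD cases (a single nonzero gap together with a single value of $|x-\tfrac12|$).
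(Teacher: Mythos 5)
Your proposal is correct and takes essentially the same route as the paper: the identity \eqref{eq:discrepancyvsunbalance2} via the three-term expansion of the kernel integral (which the paper delegates to a citation), then Newton's binomial expansion combined with Theorem~\ref{thm:HamminSimilarityvsUnbalance} to obtain \eqref{eq:discrepancyvsunbalance1}, and finally the explicit CD/WD/MD point-set formulas evaluated at the grid points $\frac{2A_{i,j}-1}{2s}$ with per-coordinate kernel bounds against the DD parameters---precisely the ``easily obtainable bounds'' the paper's proof alludes to. Your WD analysis (array-independent cross term, off-diagonal maximum attained at gaps $1$ and $s-1$, hence equality for $s\leq 3$) and the $s=2$ equality cases for CD and MD match the intended argument, so the only difference is that you spell out steps the paper leaves to references.
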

\begin{cor}\label{cor:discrepancyvsunbalance}
Let $A\in [s]^{N\times k}$ be an array with $N$ runs, $k$ factors and $s$ levels, and $a>b>0$. Then
\begin{equation}
    DD(A;a,b)\geq -\left(\frac{a-b}{s}+b\right)^k+\frac{a^k}{\lambda s^2}+b^k\left(1-\frac{1}{\lambda s^2}\right)\left(1+\left(\frac{a}{b}-1\right)(\gamma-\lfloor \gamma\rfloor)\right)\left(\frac{a}{b}\right)^{\lfloor\gamma\rfloor}
\end{equation}
where $\lambda=N/s^2$ is the index (of strength two) of $A$ and $\gamma=(\lambda s-1)k/(\lambda s^2-1)$. Moreover, the lower bound is attained if and only if $A$ is an $OA(N,k,s,1)$ and if there is a subset $R\subset T_{N,2}$ of size $\binom{N}{2}(1+\left\lfloor\gamma\right\rfloor-\gamma)$ such that for $(r,\tilde{r})\in R$, $\HammingSimilarity(r,\tilde{r})=\left\lfloor\gamma\right\rfloor$ and for $(r,\tilde{r})\in T_{N,2}\setminus R$, $\HammingSimilarity(A;r,\tilde{r})=\left\lfloor\gamma\right\rfloor+1$. In other words, the lower bound is attained if and only if $A$ attains the lower bound of Corollary~\ref{cor:HamminSimilarityvsUnbalance1}.
\end{cor}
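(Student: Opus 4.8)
The plan is to start from the Hamming-similarity form~\eqref{eq:discrepancyvsunbalance2} of Theorem~\ref{thm:discrepancyvsunbalance}, namely
\[
DD(A;a,b)^2 = -\left(\frac{a-b}{s}+b\right)^k + \frac{b^k}{N^2}\sum_{1\leq r,\tilde{r}\leq N}\left(\frac{a}{b}\right)^{\HammingSimilarity(A;r,\tilde{r})},
\]
and to minimize the double sum over all arrays $A\in[s]^{N\times k}$. Since $a>b>0$, the function $g(x):=(a/b)^x$ is strictly increasing and strictly convex, so the problem reduces to understanding how small $\sum_{r,\tilde{r}}g(\HammingSimilarity(A;r,\tilde{r}))$ can be. First I would peel off the $N$ diagonal terms, where $\HammingSimilarity(A;r,r)=k$, contributing exactly $N(a/b)^k$; after multiplying by $b^k/N^2$ this yields the term $a^k/(\lambda s^2)$, using $N=\lambda s^2$.

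For the off-diagonal terms I would control their total. Writing $\sum_{r,\tilde r}\HammingSimilarity(A;r,\tilde r)=\sum_{j=1}^k\sum_{v\in S}\bigl(\#\{i:A_{i,j}=v\}\bigr)^2$ and applying Cauchy--Schwarz (equivalently convexity) column by column gives $\sum_{r,\tilde r}\HammingSimilarity(A;r,\tilde r)\geq kN^2/s$, with equality precisely when each level occurs $N/s$ times in every column, i.e.\ when $A$ is an $OA(N,k,s,1)$. Subtracting the diagonal contribution $Nk$ and dividing by the $N(N-1)$ off-diagonal ordered pairs shows their mean Hamming similarity is at least $\gamma=(\lambda s-1)k/(\lambda s^2-1)$.

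Next I would minimize $\sum_{r\neq\tilde r}g(\HammingSimilarity(A;r,\tilde r))$ subject to this constraint. Because $g$ is increasing, the minimum sits on the boundary where the total equals its lower bound (forcing $OA(N,k,s,1)$); because $g$ is strictly convex, a smoothing argument—replacing a pair of values $(u,v)$ with $u<v-1$ by $(u+1,v-1)$ strictly decreases $\sum g$—shows the minimizing multiset of off-diagonal similarities takes only the values $\lfloor\gamma\rfloor$ and $\lfloor\gamma\rfloor+1$. Solving the two linear equations (fixed count $\binom{N}{2}$, fixed mean $\gamma$) gives $\binom{N}{2}(1+\lfloor\gamma\rfloor-\gamma)$ unordered pairs at $\lfloor\gamma\rfloor$ and $\binom{N}{2}(\gamma-\lfloor\gamma\rfloor)$ at $\lfloor\gamma\rfloor+1$. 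Substituting these counts, factoring out $(a/b)^{\lfloor\gamma\rfloor}$, and using $(N-1)/N=1-1/(\lambda s^2)$ collapses the off-diagonal contribution into $b^k\bigl(1-1/(\lambda s^2)\bigr)\bigl(1+(a/b-1)(\gamma-\lfloor\gamma\rfloor)\bigr)(a/b)^{\lfloor\gamma\rfloor}$, completing the bound.

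For the equality statement, both inequalities used are sharp only at a configuration shared with Corollary~\ref{cor:HamminSimilarityvsUnbalance1}: the sum bound forces $A$ to be an $OA(N,k,s,1)$, and the convexity step forces the off-diagonal Hamming similarities into the balanced two-value distribution above, which is exactly the stated set $R\subset T_{N,2}$. Since the minimizing multiset of integer similarities with fixed total is the same for \emph{every} strictly convex increasing objective—whether $g(x)=(a/b)^x$ here or $g(x)=\binom{x}{2}$ underlying Theorem~\ref{thm:HamminSimilarityvsUnbalance} and Corollary~\ref{cor:HamminSimilarityvsUnbalance1}—equality here holds iff $A$ attains the lower bound of Corollary~\ref{cor:HamminSimilarityvsUnbalance1}. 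The main obstacle I anticipate is not the algebra but making the convex minimization rigorous over the genuinely realizable similarity vectors; I would sidestep this by minimizing over the larger relaxed set of integer vectors meeting only the sum constraint (a valid lower bound), and then noting that the stated equality configuration, when it exists, lies in the realizable set and meets the bound.
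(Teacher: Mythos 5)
Your proposal is correct and takes essentially the same route as the paper: both start from identity~\eqref{eq:discrepancyvsunbalance2}, split off the diagonal, and reduce to minimizing the strictly convex, increasing sum $\sum_{r\neq\tilde{r}}(a/b)^{\HammingSimilarity(A;r,\tilde{r})}$ over integer similarity profiles whose total is bounded below, yielding the balanced two-value distribution and the same equality characterization tied to Corollary~\ref{cor:HamminSimilarityvsUnbalance1}. The differences are only presentational: the paper cites \cite[Lemma~2]{UDpaper8liuhickernell} for the discrete convex minimization and encodes the total constraint as $\tilde{\gamma}\geq\gamma$ via $\Unb_{2,1}(A)\geq 0$, whereas you prove the minimization by hand (smoothing) and obtain the same constraint from column-wise Cauchy--Schwarz, which is the identical inequality.
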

\begin{remark}
The above theorem generalizes \cite[Theorem~3.1 \& 3.3]{UDpaper2} for CD and WD in the case $s=2$, \cite[Theorem~4.1]{UDpaper2} for WD in the case $s=3$,  and \cite[Theorem~2]{zhou2013} for MD in the case $s=2$ to arbitrary $s$. Unfortunately, for arbitrary $s$, we don't get an equality, but the inequality shows how OAs and AOAs fit in the picture of quadrature discrepancies.
\end{remark}
\begin{remark}
An alternative formulation of the formula for $DD$ of Theorem~\ref{thm:discrepancyvsunbalance} was given by Ma and Fang~\cite{mafang2001} and Xu and Wu~\cite{xuwu2001} in terms of the generalized word pattern, in the case $s=2$. However, both forms are equivalent.
\end{remark}

\section{Proofs for Section~\ref{sec:concepts} and Section~\ref{sec:concepts2}}\label{secproofA}

\begin{proof}[Proof of Proposition~\ref{prop:OAchar1}]
By its definition, $n(A,x,j)$ counts the number of times that $x\in S^t$ appears as a row in $A[j]:=A[j_1,\ldots,j_t]$. Hence
\begin{equation}\label{eq:countingsum}
\sum_{x\in S^t}n(A,x,j)=N,
\end{equation}
since $A[j]$ has $N$ rows. Now, $A$ is an OA if and only if for every $j$, all the $n(A,x,j)$ are equal. Since there are $s^t$ possible $x\in S^t$, the latter happens if and only if the value equals $N/s^t$.  
\end{proof}

\begin{proof}[Proof of Proposition~\ref{prop:unbtolrelations}]
In~\eqref{eq:normunbtol}, $\widehat\Unb_{p,t}(A)$is a $p$-mean of the $|n(A,x,j)-N/s^t|$. Hence the bounds follow from the usual inequalities between $p$-means.
\end{proof}

\begin{proof}[Proof of Proposition~\ref{prop:trivialconstruction}]
To compute the unbalance and tolerance of $A$, we only need to deal with the case in which we pick $j=(1,2)\in T_{2,k}$, as in all the other cases we are dealing with an orthogonal array by construction of $A$.

Now, take $j=(1,2)\in T_{2,k}$. Then 
\begin{equation}
    \sum_{x\in S^t} \vert n(A,x,j)-\lambda\vert^p =  \sum_{\substack{x\in S^t \\ x_1 = x_2}}\sum_{j\in T_{t,k}}  \vert n(A,x,j)-\lambda\vert^p + \sum_{\substack{x\in S^t \\ x_1 \neq x_2}}\sum_{j\in T_{t,k}} \vert n(A,x,j)-\lambda\vert^p.
\end{equation}
In the first summand, we have 
$
n(A,x,j)=n(A,(x_2),(2)),
$
since $x_1=x_2$ and the first two columns of $A$ are equal. Now, every $OA(N,k-1,s,2)$ is $OA(N,k-1,s,1)$, where the index is now $\lambda s$ instead of $\lambda$. Thus $n(A,x,j)=\lambda s$ in the first summand. In the second summand, we have that $n(A,x,j)=0$, because the first two columns of $A$ are equal, but $x_1\neq x_2$.

Hence, putting this together, we have
\begin{align*}
    \sum_{x\in S^t} \vert n(A,x,j)-\lambda\vert^p&=\sum_{\substack{x\in S^t \\ x_1 = x_2}}  \vert 
 \lambda(s-1)\vert^p + \sum_{\substack{x\in S^t \\ x_1 \neq x_2}} \vert 0-\lambda\vert^p\\
    &=s\lambda^p(s-1)^p+s(s-1)\lambda^p=\lambda^p s(s-1)\left[1+(s-1)^{p-1}\right].
\end{align*}
Moreover, we have shown that $\vert n(A,x,j)-\lambda\vert$ is at most $\lambda(s-1)$, so this is the value of the tolerance.
\end{proof}

For completeness, we add this proposition showing the effect of adding several factors at once.

\begin{prop}\label{prop:trivialconstructiongeneral}
Let $N,k,\ell,s\in\bbN$, $B\in [s]^{N\times (k-1)}$ be some array and $\tilde{\kappa}\leq k-1$. Fix $\lambda=N/s^2$. Then the array
\[
A=\left(\begin{array}{c|c}  
    B[[\tilde{\kappa}]] &B
\end{array}\right)\in [s]^{N\times k},
\]
where $B[[\tilde{\kappa}]]$ is the subarray of $B$ formed by its first $\tilde{\kappa}$ factors, satisfies
\[
\Tol_2(A)\leq \max\left\{\Tol_2(B),\lambda(s-1)\right\}
\]
and, for $p\in[1,\infty)$,
\[
\Unb_{p,2}(A)\leq 2\Unb_{p,2}(B)+2\Unb_{p,2}(B[[\tilde{\kappa}]])+\tilde{\kappa}\lambda^p s(s-1)\left(1+(s-1)^{p-1}\right).
\]
\end{prop}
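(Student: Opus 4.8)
The plan is to reduce the general statement of Proposition~\ref{prop:trivialconstructiongeneral} to a controlled comparison between counts in the augmented array $A$ and counts in the original array $B$, handling separately the three types of column pairs that arise. Recall that $A$ consists of $B$ with its first $\tilde{\kappa}$ factors prepended, so the factors of $A$ split into a ``new'' block $N:=\{1,\dots,\tilde{\kappa}\}$ (copies of the first $\tilde{\kappa}$ factors of $B$) and an ``old'' block $O:=\{\tilde{\kappa}+1,\dots,k+\tilde{\kappa}-1\}$ (a faithful copy of $B$). For the tolerance, I would observe that any pair $j=(j_1,j_2)\in T_{2,k}$ of columns of $A$ is one of three kinds: (i) both in $O$, in which case $A[j]$ literally equals a $2$-column subarray $B[j']$ of $B$ and hence $|n(A,x,j)-\lambda|\leq \Tol_2(B)$; (ii) both in $N$, which again is a genuine $2$-column subarray of $B$ (since the new block is a copy of the first $\tilde{\kappa}$ columns of $B$), giving the same bound $\Tol_2(B)$; (iii) one in $N$ and one in $O$. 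The only delicate case is (iii), and within it the single most delicate subcase is when the new column and the old column are copies of the \emph{same} factor of $B$, i.e.\ a factor is paired against its own duplicate.

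For the paired-with-its-own-duplicate subcase, the two columns are identical, so $n(A,x,j)=0$ unless $x=(a,a)$ for some level $a\in S$, and then $n(A,x,j)$ equals the number of rows in which that factor takes value $a$. Here I would invoke the level-balance implicit in the bound: writing $m_a$ for that count, $|m_a-\lambda s|$ is at most $\Tol_1$-type data, but since the statement only assumes $B$ is arbitrary, the cleanest route is to note $|n(A,(a,a),j)-\lambda|$ is bounded by $\lambda(s-1)$ in the worst case (the maximum over a single column of how far a level-count can stray from $\lambda$ given $N=\lambda s^2$ runs), which is exactly the $\overline{\epsilon}=\lambda(s-1)$ appearing in Proposition~\ref{prop:trivialconstruction}. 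For a cross pair where the new and old columns come from \emph{different} factors of $B$, the subarray $A[j]$ is again (up to reordering) a genuine $2$-column subarray of $B$, so the $\Tol_2(B)$ bound applies. Taking the maximum over all three kinds yields $\Tol_2(A)\leq\max\{\Tol_2(B),\lambda(s-1)\}$, which is the claimed tolerance inequality.

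For the unbalance, the strategy is to partition the double sum defining $\Unb_{p,2}(A)$ according to the same three kinds of column pairs and bound each block. The $O$--$O$ pairs contribute exactly $\Unb_{p,2}(B)$. The $N$--$N$ pairs contribute exactly $\Unb_{p,2}(B[[\tilde\kappa]])$, the unbalance of the first-$\tilde\kappa$-factor subarray. The cross block $N$--$O$ is where the factor of $2$ and the extra additive term come from: cross pairs between distinct factors can be matched, two-to-one after accounting for symmetry, against pairs already counted in $B$ and in $B[[\tilde\kappa]]$, which is the source of the coefficients $2$ in front of $\Unb_{p,2}(B)$ and $\Unb_{p,2}(B[[\tilde\kappa]])$; while the $\tilde\kappa$ self-duplicate cross pairs each contribute at most $\lambda^p s(s-1)(1+(s-1)^{p-1})$, exactly the single-duplicate unbalance $\overline U$ computed in Proposition~\ref{prop:trivialconstruction}. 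Summing these four contributions gives the stated inequality. The main obstacle throughout is bookkeeping the cross block correctly: one must carefully verify that each cross pair of distinct factors corresponds to an honest $2$-column projection of $B$ (so that its contribution is already accounted for in the $\Unb_{p,2}(B)$ or $\Unb_{p,2}(B[[\tilde\kappa]])$ tallies, up to the multiplicity that produces the factor $2$), and isolate the $\tilde\kappa$ self-duplicate pairs, for which the computation reduces verbatim to the single-factor-repetition analysis of Proposition~\ref{prop:trivialconstruction}.
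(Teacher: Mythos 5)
Your decomposition is essentially the paper's: split the pairs of factors of $A$ into pairs inside the copy $B[[\tilde{\kappa}]]$, pairs inside $B$, cross pairs of distinct factors, and the $\tilde{\kappa}$ cross pairs in which a factor meets its own duplicate; every pair of the first three kinds is an honest two-column projection of $B$, and your two-to-one matching of the cross block against pairs of $B$ and of $B[[\tilde{\kappa}]]$ reproduces exactly the paper's four-case bookkeeping, including the factor $2$ and the additive term $\tilde{\kappa}\lambda^{p}s(s-1)\left(1+(s-1)^{p-1}\right)$.

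The genuine gap is in the one step you yourself flagged as delicate, and your resolution of it is an incorrect assertion rather than a proof. For a factor paired with its own duplicate, $n(A,(a,a),j)$ equals the number $m_a$ of runs in which that factor of $B$ takes level $a$, and you claim $|m_a-\lambda|\leq \lambda(s-1)$ ``in the worst case'' for an arbitrary array. This is false: for arbitrary $B$ one can have $m_a=N=\lambda s^{2}$ (e.g.\ a constant column), so the deviation can be as large as $\lambda(s^{2}-1)$, and correspondingly the contribution of such a pair to $\Unb_{p,2}(A)$ can exceed $\lambda^{p}s(s-1)\left(1+(s-1)^{p-1}\right)$; a concrete check with $s=2$, $\lambda=1$, $B$ having one constant column shows $\Tol_2(A)=3>\max\{\Tol_2(B),\lambda(s-1)\}=1$. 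The bound $\lambda(s-1)$ is correct precisely when each level appears $\lambda s$ times in the duplicated column, i.e.\ when that column is level-balanced; this is what the proof of Proposition~\ref{prop:trivialconstruction} gets for free from its hypothesis that $B$ is an $OA(N,k-1,s,2)$, hence an OA of strength one. The paper's own proof of Proposition~\ref{prop:trivialconstructiongeneral} silently imports that computation and therefore carries the same implicit balance hypothesis; your instinct that something extra is needed for ``arbitrary $B$'' was right, but the fix is not a worst-case count of $\lambda(s-1)$ (no such bound holds). To close the argument you must either assume the duplicated factors are level-balanced (strength-one in those columns), or replace $\lambda(s-1)$ by the true worst case $\lambda(s^{2}-1)$ and weaken the unbalance term accordingly.
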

\begin{remark}
If $\tilde{\kappa}=1$, note that $\Unb_{p,2}(B[[\tilde{\kappa}]])=0$, as the sum is empty.
\end{remark}
\begin{proof}[Proof of Proposition~\ref{prop:trivialconstructiongeneral}]
The proof is analogous to that of Proposition~\ref{prop:trivialconstruction}. However, there will be some extra cases, but the main idea is the same: the only pairs of factors of $A$ not covered by $B$ are those factors in which we are picking a factor of $B$ and its repetition in $B[[\tilde{\kappa}]]$.

For the tolerance, we consider a pair of factors. If the pair is analogous to pick a pair of factors of $B$, then we can bound the $|n(A,x,j)-\lambda|$ by $\Tol_2(B)$; if not, then we are picking the same factor twice, and so, using the proof of Proposition~\ref{prop:trivialconstruction}, we get the bound $\lambda(s-1)$. The maximum of these two bounds is the tolerance then.

For the unbalance, we will do the computation with more care. Given two factors $j_1$ and $j_2$ of $A$, we have four cases: a) both $j_1$ and $j_2$ are factors of $B[[\tilde{\kappa}]]$, b) both $j_1$ and $j_2$ are factors from $B$, c) $j_1$ is a factor from $B[\tilde{\kappa}]$ and $j_2$ one of the first $\tilde{\kappa}$ factors of $B$, and d) $j_1$ is a factor of $B[[\tilde{\kappa}]]$ and $j_2$ a factor from $B$ which is not one of the first $\tilde{\kappa}$ factors.

In the case (a), we get $\Unb_{p,2}(B[[\tilde{\kappa}]])$ after adding; in the case (b), we get $\Unb_{p,2}(B)$ after adding. The latter is so, because these cases reduce to computing the unbalance of $B[[\tilde{\kappa}]]$ and $B$ respectively. In the case (c), we have two kinds of summands: either $j_1$ and $j_2$ correspond to the same factor or they correspond to different ones. If they correspond to the same factor, then we have $\tilde{\kappa}$ summands, which are handled as in the proof of Proposition~\ref{prop:trivialconstruction}, getting $\tilde{\kappa} \lambda^ps(s-1)\left(1+(s-1)^{p-1}\right)$. If they correspond to different summand, then we are just computing the unbalance of $B[[\tilde{\kappa}]]$, but each pair of factors appears twice. Thus we get $2\Unb_{p,2}(B[[\tilde{\kappa}]])$. Summing up, in the case (c), we get
\[
2\Unb_{p,2}(B[[\tilde{\kappa}]])+\tilde{\kappa}\lambda^ps(s-1)\left(1+(s-1)^{p-1}\right).
\]
In the case (d), as we range over the pairs, we get all the pairs for computing the unbalance of $B$, except those use for computing the unbalance of $B[[\tilde{\kappa}]]$. Hence we get $\Unb_{p,2}(B)-\Unb_{p,2}(B[[\tilde{\kappa}]])$.

Putting all the cases above together, we get that $\Unb_{p,2}(A)$ equals
\begin{multline*}
  \Unb_{p,2}(B[[\tilde{\kappa}]])+\Unb_{p,2}(B)+\left(2\Unb_{p,2}(B[[\tilde{\kappa}]])+\tilde{\kappa}\lambda^ps(s-1)\left(1+(s-1)^{p-1}\right)\right)\\+\left(\Unb_{p,2}(B)-\Unb_{p,2}(B[[\tilde{\kappa}]])\right),  
\end{multline*}
which, after simplifying the sum, gives the desired result.
\end{proof}

\begin{proof}[Proof of Proposition~\ref{prop:unbsymmetries}]
We can see, after a simple computation, that for $A\in S^{N\times k}$, $x\in S^t$, $j\in T_{t,k}$ and $\auto{g}{\sigma}\in \Sym(S,k)$,
\begin{equation}
    n(\auto{g}{\sigma}A,x,j)=n(A,g^{-1}x,\sigma^{-1}j)
\end{equation}
where $g^{-1}x:=\left(g^{-1}x_1,\ldots,g^{-1}x_t\right)$ and $\sigma^{-1} j$ is equal to $\left(\sigma^{-1}j_1,\ldots,\sigma^{-1}j_t\right)$ up to reordering of the entries---the permutation might not give a tuple in $T_{t,k}$. Hence the action of $\Sigma(S,k)$ only permutes the summands of $\Unb_{p,t}$ or the values among which we take the maximum for computing $\Tol_t$. In either case, this does not change the resulting value.
\end{proof}

\section{Proofs for Section~\ref{sec:algebraic}}\label{secproofB}

\begin{proof}[Proof of Theorem~\ref{thm:AKconstruction} for $s$ odd]
Take $A$ as in Construction~\ref{constructionAK}. In order to prove the claims, we only need to choose all possible pair of columns and count how many times each $(u,v)\in\bbF_s^2$ appears. Now, since the rows are indexed by vectors $(x,y)\in\bbF_s^\ell$ being the entries of $A$ polynomials in those, this translates for each pair of columns to count the solutions of a system of the form
\[
\left\{
\begin{array}{rl}
     p_1(x,y)&=u\\
     p_2(x,y)&=v
\end{array}
\right.
\]
where $p_1$ and $p_2$ are polynomials of degree at most two. Note that the columns of $A$ are indexed by the disjoint union of $\{\ast\}$, $\bbF_s\times \Gamma$ and $\Xi$, so we have 5 possible combinations of column types.

\bigskip\noindent\emph{Case 1}. Column indexed by $\{\ast\}$ and column indexed by $\bbF_s\times \Gamma$ ($L$-column).

In this case, the system takes the form:
\begin{equation}\label{eq:sys1}
    \left\{
\begin{array}{rl}
     x&=u\\
     ax+\gamma^Ty&=v
\end{array}
\right.
\end{equation}
with $(a,\gamma)\in \bbF_s\times \Gamma$. Since $\gamma\neq 0$, this system has precisely $s^{\ell-2}$ solutions for each $(u,v)$, since it can be seen as an affine hyperplane in $\bbF_s^{\ell-1}$.

\bigskip\noindent\emph{Case 2}. Both columns indexed by $\bbF_s\times \Gamma$ ($L$-column).

In this case, the system takes the form:
\begin{equation}\label{eq:sys2}
    \left\{
\begin{array}{rl}
     ax+\gamma^Ty&=u\\
     \tilde{a}x+\tilde{\gamma}^Ty&=v
\end{array}
\right.
\end{equation}
with $(a,\gamma),(\tilde{a},\tilde{\gamma})\in \bbF_s\times \Gamma$ distinct. Note that the matrix
\[
\begin{pmatrix}
    a&\gamma^T\\
    \tilde{a}&\tilde{\gamma}^T
\end{pmatrix}
\]
has rank two. Otherwise, the two rows have to be proportional, but, by construction of $\Gamma$, this forces $\gamma=\tilde{\gamma}$ and so $a=\tilde{a}$, contrary to assumption. Hence this system has precisely $s^{\ell-2}$ solutions for every $(u,v)$, as it is the intersection of two non-parallel affine hyperplanes in $\bbF_s^\ell$.

\bigskip\noindent\emph{Case 3}. Column indexed by $\{\ast\}$ and column indexed by $\Xi$ ($Q$-column).

In this case, the system takes the form:
\begin{equation}\label{eq:sys3}
    \left\{
\begin{array}{rl}
     x&=u\\
     x^2+\beta x+\gamma^Ty&=v
\end{array}
\right.
\end{equation}
with $(\beta,\gamma)\in \Xi$. Arguing as in case 1, this system has precisely $s^{\ell-2}$ solutions for each $(u,v)$.

\bigskip\noindent\emph{Case 4}. Column indexed by $\bbF_s\times \Gamma$ ($L$-column) and column indexed by $\Xi$ ($Q$-column).

In this case, the system takes the form:
\begin{equation}\label{eq:sys4}
    \left\{
\begin{array}{rl}
     ax+\gamma^Ty&=u\\
     x^2+\beta x+\tilde{\gamma}^Ty&=v
\end{array}
\right.
\end{equation}
with $(a,\gamma)\in \bbF_s\times \Gamma$ and $(\beta,\tilde{\gamma})\in \Xi$. By construction of $\Gamma$, there are two cases: either $\gamma$ and $\tilde{\gamma}$ are distinct, and so linearly independent, or they are equal.

If they are distinct, then for every possible value of $x\in\bbF_s$, we get an affine system of codimension $2$ with $s^{\ell-3}$ solutions in $y\in\bbF_s^{\ell-1}$ for every $(u,v)$. Hence, in total, we have that there are $s^{\ell-2}$ solutions for each $(u,v)$.

If they are equal, then we can substitute $\gamma^Ty$ in the second equation, to get the following quadratic polynomial:
\[
x^2+(\beta-a)x+u-v=0.
\]
This polynomial has two, one or zero roots depending on whether its discriminant, given by $\Delta(u,v):=(\beta-a)^2-4(u-v)$, is a non-zero square, zero or a non-square in $\bbF_s$. Now, for each of its roots, there are $s^{\ell-2}$ possible solutions in $y$. Hence the whole system has $2s^{\ell-2}$, $s^{\ell-2}$ or $0$ solutions depending on whether $\Delta(u,v)$ is a non-zero square, zero or a non-square in $\bbF_s$.

Finally, since $\Delta(u,v)$ is an affine univariate map of $u-v$, we have that is zero for $s$ values of $(u,v)$, a non-zero square for $\frac{s(s-1)}{2}$ values of $(u,v)$ and a non-square for the remaining $\frac{s(s-1)}{2}$ values of $(u,v)$. Hence, for $s$ values of $(u,v)$, the system has $s^{\ell-2}$, and for the remaining $s(s-1)$ values of $(u,v)$, the system has either $2s^{\ell-2}$ or $0$ solutions.

\bigskip\noindent\emph{Case 5}. Both columns indexed by $\Xi$.

In this case, the system takes the form:
\begin{equation}\label{eq:sys5}
    \left\{
\begin{array}{rl}
     x^2+\beta x+\gamma^Ty&=u\\
     x^2+\tilde{\beta}x+\tilde{\gamma}^Ty&=v
\end{array}
\right.
\end{equation}
with $(\beta,\gamma),(\tilde{\beta},\tilde{\gamma})\in \Xi$. As in case 4, we distinguish two cases depending on whether $\gamma$ and $\tilde{\gamma}$ are distinct or equal.

If they are distinct, then we argue, as in case 4, making $x$ go trough all possible values. Thus we get $s^{\ell-2}$ solutions for each $(u,v)$.

If they are equal, then $\beta\neq \tilde{\beta}$. By substracting the first equation from the second, we arrive at a system of the form:
\begin{equation}\label{eq:sys5prime}
    \left\{
\begin{array}{rl}
     x^2+\beta x+\gamma^Ty&=u\\
     (\tilde{\beta}-\beta)x&=v-u
\end{array}
\right.
\end{equation}
which is analogous to the system considered in case 3. Hence the system has still $s^{\ell-2}$ solutions for each $(u,v)$.

Putting all these cases together, we have the only choice of columns of $A$ in which all possible $(u,v)$ appear the same ammount of times as a row is the case 4, which is the case in which we mix the columns of $L$ and $Q$. Hence $L$ and $Q$ are OA of strength two, and so $A$ an OA of strength one.

Now, from case $4$, we have that
\[
|n(A,((a,\gamma),(\beta,\tilde{\gamma})),(u,v))-s^{\ell-2}|
\]
differs from zero if and only if $\gamma=\tilde{\gamma}$. Note that given a column of $Q$ indexed by $(\beta,\gamma)$, this only happen for the $s$ columns of $L$ indexed by $(a,\gamma)$. And, in that case, it takes the value $0$ for $s$ possible values of $(u,v)$ and the value $s^{\ell-2}$ for the remaining $s(s-1)$ values of $(u,v)$.

Hence the claim about the tolerance of $A$ follows, and also
\[
\sum_{(a,\gamma)}\sum_{(u,v)}|n(A,((a,\gamma),(\beta,\gamma)),(u,v))-s^{\ell-2}|=s^2(s-1)s^{(\ell-2)p},
\]
from where the claim about the unbalance follows.
\end{proof}

\begin{proof}[Proof of Theorem~\ref{thm:AKconstruction} for $s$ even]
The proof is analogous to that of the odd case. We divide in the five possible cases, being the only different case the case 4, when $\gamma=\tilde{\gamma}$. Observe that this is the only case where the number of solutions of a quadratic equation mattered. 

Recall that the polynomial (rewritten in characteristic $2$) is of the form:
\[
x^2+(a+\beta)x+u+v=0.
\]
If $a=\beta$, then this polynomial has a unique zero for all $(u,v)$. If $a\neq \beta$, after a change of variables, we are reduced to the quadratic polynomial
\begin{equation}\label{eq:quadratic}
    Z^2+Z+\frac{u+v}{a+\beta}=0.
\end{equation}
Observe that this polynomial has either two roots in $\bbF_s$ or none at all---if $\zeta$ is a root, then $\zeta+1$ is also a root.

Now, consider the trace map,
\begin{align}\begin{split}\label{eq:trace}
    T:\bbF_s&\rightarrow \bbF_2\\
    x&\mapsto \sum_{i=0}^{\log_2 s-1}x^{2^i},
\end{split}\end{align}
and the map
\begin{align}\begin{split}\label{eq:cotrace}
    L:\bbF_s&\rightarrow \bbF_{s}\\
    x&\mapsto x+x^2.
\end{split}\end{align}
Both of this maps are $\bbF_2$-linear maps and we have that\footnote{To see this, note that $T\circ L=0$ (by direct computation, since $x^s=x$ for $x\in\bbF_s$) and that $\dim_{\bbF_2} \mathrm{im}\,L=s/2-1=\dim_{\bbF_2}\ker T$ (this follows from $\ker L=\bbF_2$, $\im T=\bbF_2$ and that the dimension of the domain of a linear map equals the sum of the dimension of the image and the kernel).}
\[
\im L=\ker T.
\]
Hence \eqref{eq:quadratic} has two solutions if and only if $T\left(\frac{u+v}{a+\beta}\right)=0$. But, using linearity with respect $u+v$, we have that this happens for half of the values of $(u,v)$.

Summing up, when $s$ is even, the system~\eqref{eq:sys4} has $s^{\ell-2}$ solution for all $(u,v)$ if $\gamma\neq \tilde{\gamma}$ or $a=\beta$. If $\gamma= \tilde{\gamma}$ and $a\neq \beta$, then it has $2s^{\ell-2}$ solutions for half of the values of $(u,v)$ and no solutions for the other half of the values of $(u,v)$.

Finally, counting gives the desired result.
\end{proof}

\begin{proof}[Proof of Theorem~\ref{thm:AKconstructionB} for odd $s$]
The proof strategy is similar to that of Theorem~\ref{thm:AKconstruction}. We will take two columns of $A$ and reduce the problem of how many times $(u,v)\in\bbF^2_s$ appears to that of counting solutions to a polynomial system over $\bbF_s$. However, we will get now two systems, one for the upper-half of $A$ and one for the lower-half. Hence we will count how many $(x,y)\in\bbF^{\ell}$ are solutions of
\[
\left\{
\begin{array}{rl}
     p_1(x,y)&=u\\
     p_2(x,y)&=v
\end{array}
\right.
~\text{ and }~
\left\{
\begin{array}{rl}
     \tilde{p}_1(x,y)&=u\\
     \tilde{p}_2(x,y)&=v
\end{array}
\right.
\]
and add the two counts to get how many times $(u,v)\in\bbF_s^2$ appears as a row in the two selected columns of $A$. Note that, in this case, the columns of $A$ are indexed by the disjoint union of $\{\ast\}$, $\bbF_s\times\Gamma$ ($L$-column), $\bbF_s\times\Gamma$ ($Q$-column) and $\Xi$ ($E$-column). Hence, we have 9 possible cases combining column types.

\bigskip\noindent\emph{Case 1}. Column indexed by $\{\ast\}$ and column indexed by $\bbF_s\times\Gamma$ ($L$-column).

The systems take the form
\begin{equation}
\left\{
\begin{array}{rl}
     x&=u\\
     ax+\gamma^Ty&=v
\end{array}
\right.
~\text{ and }~
\left\{
\begin{array}{rl}
     x&=u\\
     ax+\gamma^Ty+\left(1-\frac{1}{\omega}\right)\frac{a^2}{4}&=v
\end{array}
\right.
\end{equation}
where $(a,\gamma)\in\bbF_s\times \Gamma$. In both systems, arguing as in case 1 of proof of Theorem~\ref{thm:AKconstruction}, we have that there are $s^{\ell-2}$ solutions for each $(u,v)$, so a total of $2s^{\ell-2}$ in total.

\bigskip\noindent\emph{Case 2}. Column indexed by $\{\ast\}$ and column indexed by $\bbF_s\times\Gamma$ ($Q$-column)

The systems take the form
\begin{equation}
\left\{
\begin{array}{rl}
     x&=u\\
     (x-\beta)^2+\gamma^Ty&=v
\end{array}
\right.
~\text{ and }~
\left\{
\begin{array}{rl}
     x&=u\\
     \omega(x-\beta)^2+\gamma^Ty&=v
\end{array}
\right.
\end{equation}
where $(\beta,\gamma)\in\bbF_s\times \Gamma$. Again, arguing as in case 4 of proof of Theorem~\ref{thm:AKconstruction}, we have that each system has $s^{\ell-2}$ solutions for each $(u,v)$, so a total of $2s^{\ell-2}$.

\bigskip\noindent\emph{Case 3}. Column indexed by $\{\ast\}$ and column indexed by $\Xi$ ($E$-column)

The systems take the form
\begin{equation}
\left\{
\begin{array}{rl}
     x&=u\\
     (x-\beta)^2+ax&=v
\end{array}
\right.
~\text{ and }~
\left\{
\begin{array}{rl}
     x&=u\\
     \omega(x-\beta)^2+ax-\left(1-\frac{1}{\omega}\right)\frac{a^2}{4}&=v
\end{array}
\right.
\end{equation}
where $(a,\beta)\in\Xi$. In both systems, for each $(u,v)$, either there are $s^{\ell-1}$ solutions (corresponding to the free vector of variables $y\in\bbF_s^{\ell-1}$) or none at all. Now, the left-hand side system has solution for $(u,v)$ of the form
\[
\left(u,(u-\beta)^2+au\right),
\]
while the right hand side has, for $(u,v)$, solution of the form
\[
\left(u,\omega(u-\beta)^2+au-\left(1-\frac{1}{\omega}\right)\frac{a^2}{4}\right).
\]
Now, none of these two lists of $(u,v)$ intersect, since otherwise
\[
(u-\beta)^2=\frac{a^2}{4\omega}
\]
would have a solution in $\bbF_s$ contradicting that $\omega$ is a non-square in $\bbF_s$---$a$ is non-zero by election. Hence we have $2s$ $(u,v)$ for which the combined number of solutions is $s^{\ell-1}$ and $s^2-2s$ for which the combined number of solutions is zero.

\bigskip\noindent\emph{Case 4}. Both columns indexed by $\bbF_s\times\Gamma$ ($L$-column)

In this case, we get systems
\begin{equation}
\left\{
\begin{array}{rl}
     ax+\gamma^Ty&=u\\
     \tilde{a}x+\tilde{\gamma}^Ty&=v
\end{array}
\right.
~\text{ and }~
\left\{
\begin{array}{rl}
     ax+\gamma^Ty+\left(1-\frac{1}{\omega}\right)\frac{a^2}{4}&=u\\
     \tilde{a}x+\tilde{\gamma}^Ty+\left(1-\frac{1}{\omega}\right)\frac{\tilde{a}^2}{4}&=v
\end{array}
\right.
\end{equation}
with $(a,\gamma),(\tilde{a},\tilde{\gamma})\in\bbF_s\times\Gamma$. Arguing as in case 2 of proof of Theorem~\ref{thm:AKconstruction}, we have that for each $(u,v)$, each system has $s^{\ell-2}$ solutions, and so a combined number of $2s^{\ell-2}$ solutions.

\bigskip\noindent\emph{Case 5}. Column indexed by $\bbF_s\times\Gamma$ ($L$-column) and column indexed by $\bbF_s\times\Gamma$ ($Q$-column)

The systems take the form
\begin{equation}
\left\{
\begin{array}{rl}
     ax+\gamma^Ty&=u\\
     (x-\beta)^2+\tilde{\gamma}^Ty&=v
\end{array}
\right.
~\text{ and }~
\left\{
\begin{array}{rl}
     ax+\gamma^Ty+\left(1-\frac{1}{\omega}\right)\frac{a^2}{4}&=u\\
     \omega(x-\beta)^2+\tilde{\gamma}^Ty&=v
\end{array}
\right.
\end{equation}
with $(a,\gamma),(\beta,\tilde{\gamma})\in\bbF_s\times\Gamma$. We argue similar to case 4 of proof of Theorem~\ref{thm:AKconstruction}, depending our analysis on whether $\gamma$ and $\tilde{\gamma}$ are equal or different.

If $\gamma\neq\tilde{\gamma}$, then, for each $x\in\bbF_s$, we have, in both the left-hand side and right-hand side system, an affine system of codimension 2 with $s^{\ell-3}$ solutions in $y\in\bbF_s^{\ell-1}$. Hence, for every $(u,v)$, each system has $s^{\ell-2}$ solutions, and so a combined number of $2s^{\ell-2}$ solutions. 

If $\gamma=\tilde{\gamma}$, then we substitute, in each system, the first equation into the second, obtaining the equation
\[
(x-\beta)^2-a(x-\beta)+u-v-a\beta=0
\]
for the left-hand side system, and the equation
\[
\omega(x-\beta)^2-a(x-\beta)+u-v-a\beta-\left(1-\frac{1}{\omega}\right)\frac{a^2}{4}=0
\]
for the right-hand side system. Now, the discriminat (in $x-\beta$) of the first equation is
\[
a^2-4(u-v-a\beta)
\]
while the discriminant of the second one is
\[
\omega\left(a^2-4(u-v-a\beta)\right).
\]
Hence, for each $(u,v)$, one of this quadratic equations has two solutions in $\bbF_s$ if and only if the other has zero, has exactly one if and only if the other has one, and it has zero if and only if the other has two. 

Going back to the original system, this means that for each $(u,v)$, one of the following three things happen: (1) the left-hand side system has $2s^{\ell-2}$ solutions and the right-hand side one zero; (2) the left-hand side and right-hand side systems have both $s^{\ell-2}$ solutions; and (3) the left-hand side system has no solutions and the right-hand side one $2s^{\ell-2}$ solutions. Hence, for each $(u,v)$, no matter the case, the combined number of solutions is $2s^{\ell-2}$.

\bigskip\noindent\emph{Case 6}. Column indexed by $\bbF_s\times\Gamma$ ($L$-column) and column indexed by $\Xi$ ($E$-column)

We get systems of the form
\begin{equation}
\left\{
\begin{array}{rl}
     ax+\gamma^Ty&=u\\
     (x-\beta)^2-\tilde{a}x&=v
\end{array}
\right.
~\text{ and }~
\left\{
\begin{array}{rl}
     ax+\gamma^Ty+\left(1-\frac{1}{\omega}\right)\frac{a^2}{4}&=u\\
     \omega(x-\beta)^2-\tilde{a}x-\left(1-\frac{1}{\omega}\right)\frac{\tilde{a}^2}{4}&=v
\end{array}
\right.
\end{equation}
with $(a,\gamma)\in\bbF_s\times\Gamma$ and $(\tilde{a},\beta)\in\Xi$. To solve these systems, we first solve the second equation and then the corresponding linear equation in $y$.

When solving the quadratic equations, we have the discriminant (with respect $x-\beta$)
\[
\tilde{a}^2+4(\tilde{a}\beta+v)
\]
on the left-hand side, and the discriminant
\[
\omega\left(\tilde{a}^2+4(\tilde{a}\beta+v)\right)
\]
on the right-hand side. This is then the situation of the previous case, having three cases for the system: (1) the left-hand side system has $2s^{\ell-2}$ solutions and the right-hand side one zero; (2) the left-hand side and right-hand side systems have both $s^{\ell-2}$ solutions; and (3) the left-hand side system has no solutions and the right-hand side one $2s^{\ell-2}$ solutions.

Hence, for any $(u,v)$, we conclude that the combined number of solutions is always $2s^{\ell-2}$.

\bigskip\noindent\emph{Case 7}. Both columns indexed by $\bbF_s\times\Gamma$ ($Q$-column)

The systems take the form
\begin{equation}
\left\{
\begin{array}{rl}
     (x-\beta)^2+\gamma^Ty&=u\\
     (x-\tilde{\beta})^2+\tilde{\gamma}^Ty&=v
\end{array}
\right.
~\text{ and }~
\left\{
\begin{array}{rl}
     \omega(x-\beta)^2+\gamma^Ty&=u\\
     \omega(x-\tilde{\beta})^2+\tilde{\gamma}^Ty&=v
\end{array}
\right.
\end{equation}
with $(\beta,\gamma),(\tilde{\beta},\tilde{\gamma})\in\bbF_s\times\Gamma$. This is a similar case to the case 4 above, having two cases depending on whether $\gamma$ and $\tilde{\gamma}$ are equal.

If $\gamma\neq\tilde{\gamma}$, then the same argument as in case 4 work out, having that each system has $s^{\ell-2}$ solutions for each $(u,v)$. Hence we get a combined count of $2s^{\ell-2}$ for each $(u,v)$.

If $\gamma=\tilde{\gamma}$, then we substract the first equation from the second, obtaining the systems
\begin{equation} \label{ec:b17}
\left\{
\begin{array}{rl}
     (x-\beta)^2+\gamma^Ty&=u\\
     2(\beta-\tilde{\beta})x+\tilde{\beta}^2-\beta^2&=v-u
\end{array}
\right.
~\text{ and }~
\left\{
\begin{array}{rl}
     \omega(x-\beta)^2+\gamma^Ty&=u\\
     2\omega(\beta-\tilde{\beta})x+\omega(\tilde{\beta}^2-\beta^2)&=v-u
\end{array}
\right.
\end{equation}
Now, we are picking different columns, so $\beta\neq\tilde{\beta}$. Hence the second equation has always a solution in $x$, which can be substituted in the first equation to get a linear system with $s^{\ell-2}$ solutions.

Hence, for each $(u,v)$, each system has $s^{\ell-2}$ solutions, having a combined number of $2s^{\ell-2}$ solutions.

\bigskip\noindent\emph{Case 8}. Column indexed by $\bbF_s\times\Gamma$ ($Q$-column) and column indexed by $\Xi$ ($E$-column)

The system takes the form
\begin{equation}
\left\{
\begin{array}{rl}
     (x-\beta)^2+\gamma^Ty&=u\\
     (x-\tilde{\beta})^2-ax&=v
\end{array}
\right.
~\text{ and }~
\left\{
\begin{array}{rl}
     \omega(x-\beta)^2+\gamma^Ty&=u\\
     \omega(x-\tilde{\beta})^2-ax-\left(1-\frac{1}{\omega}\right)\frac{a^2}{4}&=v
\end{array}
\right.
\end{equation}
with $(\beta,\gamma)\in\bbF_s\times\Gamma$ and $(a,\tilde{\beta})\in\Xi$. This is a case similar to case 6 above, where we solve the second equation and then the corresponding linear equation in $y$. Even more, we have the same second equation, up to changing $\beta$ by $\tilde{\beta}$, so the same discussion applies.

Hence, arguing as in case 6, we conclude that for each $(u,v)$, the combined total of solution is $2s^{\ell-2}$.

\bigskip\noindent\emph{Case 9}. Both columns indexed by $\Xi$ ($E$-column)

The system takes the form
\begin{equation}
\left\{
\begin{array}{rl}
     (x-\beta)^2-ax&=u\\
     (x-\beta)^2-\tilde{a}x&=v
\end{array}
\right.
~\text{ and }~
\left\{
\begin{array}{rl}
     \omega(x-\beta)^2-ax-\left(1-\frac{1}{\omega}\right)\frac{a^2}{4}&=u\\
     \omega(x-\beta)^2-\tilde{a}x-\left(1-\frac{1}{\omega}\right)\frac{\tilde{a}^2}{4}&=v
\end{array}
\right.
\end{equation}
with $(a,\beta),(\tilde{a},\beta)\in\Xi$---note that the second component is the same by construction of $\Xi$. Moreover, without loss of generality, we can assume that $\beta=0$ after changing $x$ by $x+\beta$, $u$ by $u-a\beta$, and $v$ by $v-\tilde{a}\beta$. Hence the pair of systems becomes:
\begin{equation}
\left\{
\begin{array}{rl}
     x^2-ax&=u\\
     x^2-\tilde{a}x&=v
\end{array}
\right.
~\text{ and }~
\left\{
\begin{array}{rl}
     \omega x^2-ax-\left(1-\frac{1}{\omega}\right)\frac{a^2}{4}&=u\\
     \omega x^2-\tilde{a}x-\left(1-\frac{1}{\omega}\right)\frac{\tilde{a}^2}{4}&=v
\end{array}
\right.
\end{equation}

Now, for each $(u,v)$, the combined number of solutions is a multiple of $S^{\ell-1}$ since $y$ is free in both systems. Thus we only need to count the number of solutions in $x$.

For each system, the number of solutions in $x$ is either one or zero. Otherwise, we would have a system with the same quadratic equation twice, which is not the case as $a\neq\tilde{a}$ by construction. Now, by looking at the discriminants, we have that if the left system has a solution, then
\[
a^2-u\text{ and }\tilde{a}^2-v
\]
are squares in $\bbF_s$, and if the right system has a solution, then
\[
\omega(a^2-u)\text{ and }\omega(\tilde{a}^2-v)
\]
are squares in $\bbF_s$. Hence, if both systems have solutions, we necessary have
\[
u=a^2\text{ and }v=\tilde{a}^2.
\]
But then, for this choice of $(u,v)$, neither the left nor the right system have a solution---since $a\neq\tilde{a}$ by construction. Hence either the left system has a solution or the right system has one, but not both.

Now, the left system has a solution in $x$ if and only if the quadratic polynomials have a common solution. By Sylvester's resultant~\cite[pp.~155--156]{coxlittleoshea2015}, the latter happens if and only if
\begin{equation}
Q(u,v):=\det
\begin{pmatrix}
1&-a&-u&\\
&1&-a&-u\\
1&-\tilde{a}&-v&\\
&1&-\tilde{a}&-v
\end{pmatrix}=(u-v)^2+(a-\tilde{a})(\tilde{a}u-av)
\end{equation}
vanishes---this polynomial vanishes if and only if $x^2-ax=u$ and $x^2-\tilde{a}x=v$ have a common zero.

Now, a simple computation, show that $Q$ defines a non-empty non-singular projective quadratic curve. Hence, parameterizing it using the straight lines passing through its point $(0,0)$, we deduce that it has exactly $s$ in $\bbF_s^2$. In other words, there are $s$ $(u,v)$s for which the left system has $s^{\ell-1}$ solutions (the solution in $x$ gives rise to $s^{\ell-1}$ in $(x,y)$ as $y$ can take $s^{\ell-1}$ values); and for the rest of the $(u,v)$, the left system does not have any solution at all.

Similarly, one can show that that the left system has solution if and only if 
\begin{equation}
\tilde{Q}(u,v)=Q_1\left(u+\left(1-\frac{1}{\omega}\right)\frac{a^2}{4},v+\left(1-\frac{1}{\omega}\right)\frac{\tilde{a}^2}{4}\right)
\end{equation}
vanishes. $\tilde{Q}$ defines again a non-empty non-singular projective quadratic planar curve---its just a translate of the curve defined by $Q$ Hence, we have $s$ $(u,v)$s for which the right system has $s^{\ell-1}$ solutions; and for the rest of the $(u,v)$, the left system does not have any solution at all.

Finally, since both systems cannot have solutions at the same time, there are $2s$ values of $(u,v)$ for which the combined number of solutions is $s^{\ell-1}$ and $s(s-2)$ for which the combined number of solutions is zero.

We have gone through all possible combinations of $2$ columns of $A$. Now, there are only two cases in which not all $(u,v)$ appear the same number of times as a row: case 3 and case 9.

Hence, doing the counting as in the proof of Theorem~\ref{thm:AKconstruction}, we have that, in case 3,
\[
\sum_{(a,\beta)}\sum_{(u,v)}\left|n(A,(\ast,(a,\beta)),(u,v))-2s^{\ell-2}\right|^p=\tilde{\kappa}\left(2s\left((s-2)s^{\ell-2}\right)^{p}+s(s-2)(2s^{\ell-2})^{p}\right);
\]
and, in case 9,
\begin{multline*}
    \sum_{(a,\beta),(\tilde{a},\tilde{\beta})}\sum_{(u,v)}\left|n(A,(\ast,(a,\beta)),(u,v))-2s^{\ell-2}\right|^p\\=\binom{\tilde{\kappa}}{2}\left(2s\left((s-2)s^{\ell-2}\right)^{p}+s(s-2)(2s^{\ell-2})^{p}\right).
\end{multline*}
Adding all up, and simplifying, gives the desired unbalance.
\end{proof}

\begin{proof}[Proof of Theorem~\ref{thm:AKconstructionB} for even $s$]
As in the proof of Theorem~\ref{thm:AKconstruction} for even $s$, we just re-analyze the cases of the proof of Theorem~\ref{constructionAKB} for odd $s$ in which changes have to me made to the arguments.

Observe that a change to the argument has to be made only in the cases where we have to analyze the solutions of a quadratic polynomial. In the remainder of the cases, this is not needed, as characteristic two does not play a role. Said this, the cases that we will re-analyze are the cases 3, 5, 6, 7, 8 and 9 of the Proof of Theorem~\ref{thm:AKconstructionB} for odd $s$, since these are the ones that are significantly different. As this proof will be similar, we will not give as many details as in that proof.

Before continuing with the proof, we recall the map $T$ and $L$ given respectively by the formulas~\eqref{eq:trace} and \eqref{eq:cotrace} in the proof of Theorem~\ref{thm:AKconstruction} for even $s$. In this proof, we proved the following fact:
\begin{quote}
    For $\delta\neq 0$, the quadratic polynomial $x^2+\delta x+\varepsilon\in\bbF_s[x]$ has either two roots in $\bbF_s$ or none. Moreover, it has two roots in $\bbF_s$ if and only if $T\left(\varepsilon/\delta^2\right)=0$.
\end{quote}
We will use this fact to analyze all the quadratic equations that appear in the analysis. Now, note that the choice of $\zeta$ forces $T(\zeta)=1$. Otherwise, $\mathrm{span}_{\bbF_2}\{\zeta^a\mid a\geq 0\}$ would be a propper subfield of $\bbF_s$ contradicting the fact that $\zeta^{s/2}\neq \zeta$.

\bigskip\noindent\emph{Case 3}. Column indexed by $\{\ast\}$ and column indexed by $\Xi$ ($E$-column)

The systems take the form
\begin{equation}
\left\{
\begin{array}{rl}
     x&=u\\
     x^2+\delta x&=v
\end{array}
\right.
~\text{ and }~
\left\{
\begin{array}{rl}
     x&=u\\
     x^2+\delta x+\delta^2\zeta&=v
\end{array}
\right.
\end{equation}
where $\delta\in\Xi$. By substituting the first equation into the second one, we get that the first system has $s^{\ell-1}$ (the variable $y$ is free) for values of the form
\begin{equation}
    \left(u,u^2+\delta u\right)
\end{equation}
and zero otherwise. Similarly, the second system has $s^{\ell-1}$ solutions for values of the form
\begin{equation}
    \left(u,u^2+\delta u+\delta^2\zeta\right)
\end{equation}
and zero otherwise. Now, $\delta\neq 0$, so these two families of $(u,v)$ pairs are disjoint. Hence for $2s$ values of $(u,v)$, the combined number of solutions is $s^{\ell-1}$; and for the remaining $s(s-2)$ values of $(u,v)$, the combined number of solutions is zero.

\bigskip\noindent\emph{Case 5}. Column indexed by $\bbF_s\times\Gamma$ ($L$-column) and column indexed by $\bbF_s\times\Gamma$ ($Q$-column)

The systems take the form
\begin{equation}
\left\{
\begin{array}{rl}
     ax+\gamma^Ty&=u\\
     x^2+\beta x+\tilde{\gamma}^Ty&=v
\end{array}
\right.
~\text{ and }~
\left\{
\begin{array}{rl}
     ax+\gamma^Ty+a^2\zeta&=u\\
     x^2+\beta x+\tilde{\gamma}^Ty+\beta^2\zeta&=v
\end{array}
\right.
\end{equation}
with $(a,\gamma),(\beta,\tilde{\gamma})\in\bbF_s\times\Gamma$. There are two cases: $\gamma\neq\tilde{\gamma}$ and $\gamma=\tilde{\gamma}$.

If $\gamma\neq \tilde{\gamma}$, then, as done in many cases before, we solve the corresponding linear system in $y$ for each of the possible values of $x$. Hence we get $2s^{\ell-2}$ combined solutions for each $(u,v)$.

If $\gamma=\tilde{\gamma}$, we add the first equation to the second obtaining, respectively\footnote{Recall that we are in characteristic two, so we can ignore signs.},
\[
x^2+(a+\beta) x+u+v=0
\]
and
\[
x^2+(a+\beta) x+u+v+(a+\beta)^2\zeta=0.
\]
Now, the first quadratic equation has two solutions if and only if $T((u+v)/(a+\beta)^2)=0$, and the second one if and only if $T((u+v)/(a+\beta)^2)+1=T((u+v)/(a+\beta)^2+\zeta)=0$. Thus one of the quadratic equations has two solutions if and only if the other one has zero. Therefore we conclude that there are $2s^{\ell-2}$ combined solutions for each $(u,v)$.

\bigskip\noindent\emph{Case 6}. Column indexed by $\bbF_s\times\Gamma$ ($L$-column) and column indexed by $\Xi$ ($E$-column)

We get systems of the form
\begin{equation}
\left\{
\begin{array}{rl}
     ax+\gamma^Ty&=u\\
     x^2+\delta x&=v
\end{array}
\right.
~\text{ and }~
\left\{
\begin{array}{rl}
     ax+\gamma^Ty+a^2\zeta&=u\\
     x^2+\delta x+\delta^2\zeta&=v
\end{array}
\right.
\end{equation}
with $(a,\gamma)\in\bbF_s\times\Gamma$ and $\delta\in\Xi$. To solve these systems, we first solve the second equation and then the corresponding linear equation in $y$.

Now, the first quadratic equation has two solutions if and only if $T(v/\delta^2)=0$, and the second one if and only if $T(v/\delta^2)+1=T(v/\delta^2+\zeta)=0$. So one has two solutions if and only if the other has zero. Thus one system has $2s^{\ell-2}$ solutions if and only if the other one has zero, and we conclude that we have a combined number of $2s^{\ell-2}$ combined solutions for each $(u,v)$.

\bigskip\noindent\emph{Case 7}. Both columns indexed by $\bbF_s\times\Gamma$ ($Q$-column)

The systems take the form
\begin{equation}
\left\{
\begin{array}{rl}
     x^2+\beta x+\gamma^Ty&=u\\
     x^2+\tilde{\beta} x+\tilde{\gamma}^Ty&=v
\end{array}
\right.
~\text{ and }~
\left\{
\begin{array}{rl}
     x^2+\beta x+\gamma^Ty+\beta^2\zeta&=u\\
     x^2+\tilde{\beta} x+\tilde{\gamma}^Ty+\tilde{\beta}^2\zeta&=v
\end{array}
\right.
\end{equation}
with $(\beta,\gamma),(\tilde{\beta},\tilde{\gamma})\in\bbF_s\times\Gamma$. This case is analogous to the case 5. If $\gamma\neq \tilde{\gamma}$, we solve the corresponding linear system in $y$ for each $x$ getting a total of $2s^{\ell-2}$ combined solutions for each $(u,v)$. If $\gamma=\tilde{\gamma}$, we add equations to reduce to the quadratic case and deduce that one system has $2s^{\ell-2}$ solutions if and only if the other one has zero solutions. Hence we conclude again that we have $2s^{\ell-2}$ combined solutions for each $(u,v)$.

\bigskip\noindent\emph{Case 8}. Column indexed by $\bbF_s\times\Gamma$ ($Q$-column) and column indexed by $\Xi$ ($E$-column)

The system takes the form
\begin{equation}
\left\{
\begin{array}{rl}
     x^2+\beta x+\gamma^Ty&=u\\
     x^2+\delta x&=v
\end{array}
\right.
~\text{ and }~
\left\{
\begin{array}{rl}
     x^2+\beta x+\gamma^Ty+\beta^2\zeta&=u\\
     x^2+\delta x+\delta^2\zeta&=v
\end{array}
\right.
\end{equation}
with $(\beta,\gamma)\in\bbF_s\times\Gamma$ and $\delta\in \Xi$. This cases is analogous to case 6, we first solve the quadratic equation in $x$ and then the linear equation in $y$. Almost the same argument gives that one system has $2s^{\ell-2}$ solutions if and only if the other one has zero. Hence the combined total number of solutions is $2s^{\ell-2}$ independently of $(u,v)$.

\bigskip\noindent\emph{Case 9}. Both columns indexed by $\Xi$ ($E$-column)

The system takes the form
\begin{equation}
\left\{
\begin{array}{rl}
     x^2+\delta x&=u\\
     x^2+\tilde{\delta} x&=v
\end{array}
\right.
~\text{ and }~
\left\{
\begin{array}{rl}
     x^2+\delta x+\delta^2\zeta&=u\\
     x^2+\tilde{\delta} x+\tilde{\delta}^2\zeta&=v
\end{array}
\right.
\end{equation}
with $\delta,\tilde{\delta}\in\Xi$ such that $\delta\neq\tilde{\delta}$. 

Each system has a number of solutions that is $s^{\ell-2}$ (number of possible values that the free $y$ can take) times the number of solutions in $x$. Moreover, each system has either one or zero solutions in $x$, since the two quadrics that form the system are different, and so cannot have more than one root in common. Even more, both systems cannot have a solution in $x$ at the same time, otherwise
\[
T(u/\delta^2)=T(v/\tilde{\delta}^2)=T(u/\delta^2+\zeta)=T(v/\tilde{\delta}^2+\zeta)=0,
\]
which is not possible as $T(\zeta)\neq 0$. Hence, as in the odd case, we have just to count for how many $(u,v)$, the left system has one solution in $x$, and for how many $(u,v)$, the right system has one solution in $x$. 

By looking at each of the quadratic equations in each of the systems (as we have done before), we have that for each $(u,v)$, at most one system has $s^{\ell-1}$ solutions (note that they have at most a common solution, since they are different quadratic polynomials; and $s^{\ell-1}$ come from the free variable $y$), being possible that none of the systems have solutions. 

Now, any of the systems have a solution in $x$ if and only if the conforming quadratics have a common root. Thus, by the Sylvester resultant~\cite[pp.~155--156]{coxlittleoshea2015}, the left system has a solution in $x$ if and only if 
\[
Q(u,v)=\det\begin{pmatrix}
    1&\delta&u&\\
    &1&\delta&u\\
    1&\tilde{\delta}&v&\\
    &1&\tilde{\delta}&v
\end{pmatrix}=(u+v)^2+(\delta+\tilde{\delta})(\tilde{\delta}u+\delta v)
\]
vanishes; and the right system has a solution in $x$ if and only if
and
\[
\tilde{Q}(u,v):=Q(u+\delta^2\zeta,v+\tilde{\delta}^2\zeta)=\det\begin{pmatrix}
    1&\delta&u+\delta^2\zeta&\\
    &1&\delta&u+\delta^2\zeta\\
    1&\tilde{\delta}&v+\tilde{\delta}^2\zeta&\\
    &1&\tilde{\delta}&v+\tilde{\delta}^2\zeta
\end{pmatrix}
\]
vanishes.

One can easily check that both $Q$ and $\tilde{Q}$ define non-empty non-singular projective quadratic planar curves. Thus, considering all straight lines passing through a point\footnote{Just consider all straight lines passing through a point of the curve, each will intersect the curve at another point except the tangent curve. This gives a bijection between the projective line and the projective points of the curve, and so between $\bbF_s$ with the affine points of the curve.}, we get that both these curves have $s$ points in the affine plane $\bbF_s$. 

Hence, by the above paragraphs, there are $s$ values of $(u,v)$ for which the left system has $s^{\ell-1}$ solutions; and $s$ values of $(u,v)$ for which the right system has $s^{\ell-1}$ solutions. As this cannot happen at the same time, there are $2s$ values of $(u,v)$ for which the combined number of solutions is $s^{\ell-1}$ and $s(s-2)$ for which the combined number of solutions is zero.


Now, the proof is analogous to that of Theorem~\ref{thm:AKconstructionB} for odd $s$, since the same cases (3 and 9) contribute to the unbalance and they do so with the same number in terms of $s$, $\ell$ and $p$. 
\end{proof}

\section{Proofs of Appendices~\ref{sec:unbalancebounds} and \ref{sec:comparative}}\label{sec:unbalanceproofs}

\begin{proof}[Proof of Theorem~\ref{thm:HamminSimilarityvsUnbalance}]
We include this proof only for completeness. We follow the proof of \cite[Lemma~3.1]{UDpaper2}, which is an application of double counting. Fix the array $A\in S^{N\times k}$. For $r\in [N]$, $j\in [k]$ and $x\in S$, consider the numbers
\begin{equation}
    z_{r,j}(x)=\begin{cases}
    1&\text{if }A_{r,j}=x\\
    0&\text{if }A_{r,j}\neq x.
\end{cases}
\end{equation}
Then we have that
\begin{equation}\label{eq:indicatorsum1}
    \HammingSimilarity(A;r,\tilde{r})=\sum_{j\in [k]}\sum_{x\in S}z_{r,j}(x)z_{\tilde{r},j}(x)
\end{equation}
and that
\begin{equation}\label{eq:indicatorsum2}
    \binom{\HammingSimilarity(A;r,\tilde{r})}{t}=\sum_{j\in T_{t,k}}\sum_{x\in S^t}z_{r,j_1}(x_1)z_{\tilde{r},j_1}(x_1)\cdots z_{r,j_t}(x_t)z_{\tilde{r},j_t}(x_t).
\end{equation}
To see this, note that $z_{r,j_1}(x_1)z_{\tilde{r},j_1}(x_1)\cdots z_{r,j_t}(x_t)z_{\tilde{r},j_t}(x_t)=1$ if and only if the $r$th and $\tilde{r}$th runs of $A[j]$ equal $x$. Therefore $\sum_{x\in S^t}z_{r,j_1}(x_1)z_{\tilde{r},j_1}(x_1)\cdots z_{r,j_t}(x_t)z_{\tilde{r},j_t}(x_t)=1$ if the $r$th and $\tilde{r}$th runs of $A[j]$ are equal, i.e., if $j$ corresponds to a subset of the factors where the $r$th and $\tilde{r}$th runs of $A$ agree.

Now, we also have that
\begin{align*}
    \Unb_{2,t}(A)&=\sum_{x\in S^{t}}\sum_{j\in T_{t,k}}\left|n(A,x,j)-N/s^t\right|^2\\
    &=\sum_{x\in S^{t}}\sum_{j\in T_{t,k}}n(A,x,j)^2-2\sum_{x\in S^{t}}\sum_{j\in T_{t,k}}n(A,x,j)\frac{N}{s^t}+\sum_{x\in S^{t}}\sum_{j\in T_{t,k}}\left(\frac{N}{s^t}\right)^2\\
    &=\sum_{x\in S^{t}}\sum_{j\in T_{t,k}}n(A,x,j)^2-2\binom{k}{t}N\frac{N}{s^t}+s^t \binom{k}{t}\left(\frac{N}{s^t}\right)^2\\
    &=\sum_{x\in S^{t}}\sum_{j\in T_{t,k}}n(A,x,j)^2-\binom{k}{t}\frac{N^2}{s^t}
\end{align*}
where, in the fourth equality, we have used that for $j\in T_{t,k}$, $\sum_{x\in S^t}n(A,x,j)=N$. Hence, we only need to show that
\begin{equation}
    \sum_{x\in S^{t}}\sum_{j\in T_{t,k}}n(A,x,j)^2=\sum_{r,\tilde{r}}\sum_{j\in T_{t,k}}\sum_{x\in S}z_{r,j_1}(x_1)z_{\tilde{r},j_1}(x_1)\cdots z_{r,j_t}(x_t)z_{\tilde{r},j_t}(x_t).
\end{equation}
But, for each $x\in S^t$ and $j\in T_{t,k}$,
\begin{align*}
    \sum_{r,\tilde{r}}z_{r,j_1}(x_1)z_{\tilde{r},j_1}(x_1)\cdots z_{r,j_t}(x_t)z_{\tilde{r},j_t}(x_t)&=\sum_{r,\tilde{r}}\left(z_{r,j_1}(x)\cdots z_{r,j_t}(x_t)\right)\left(z_{\tilde{r},j_1}(x_1)\cdots z_{\tilde{r},j_t}(x_t)\right)\\
    &=\left(\sum_{r}z_{r,j_1}(x)\cdots z_{r,j_t}(x_t)\right)^2\\
    &=n(A,x,j)^2,
\end{align*}
since $z_{r,j_1}(x)\cdots z_{r,j_t}(x_t)=1$ if and only if the $r$th run of $A[j]$ is $x$, meaning we are counting the runs of $A[j]$ equal to $x$. Hence the desired identity follows.
\end{proof}

\begin{proof}[Proof of Corollary~\ref{cor:HamminSimilarityvsUnbalance1}]
This is a consequence of \cite[Lemma~2]{UDpaper8liuhickernell} applied to
\begin{equation}
    U_{2,2}(A)=2\sum_{(r,\tilde{r})\in T_{2,\lambda s^2}}\binom{\HammingSimilarity(A;r,\tilde{r})}{2}-\lambda (\lambda -1)s^2\binom{k}{2},
\end{equation}
and noting that $\sum_{(r,\tilde{r})\in T_{2,\lambda s^2}}\HammingSimilarity(A;r,\tilde{r})=\frac{1}{2}\Unb_{2,1}(A)+\binom{\lambda s^2}{2}k$.
The right-hand side first sum is lower bounded by
\begin{equation}\label{eq:intminimum}
    \min\left\{2\sum_{(r,\tilde{r})\in T_{2,N\lambda s^2}}\binom{h_{r,\tilde{r}}}{2}\mid h_{r,\tilde{r}}\in \bbZ\text{ and }\sum_{(r,\tilde{r})\in T_{2,\lambda s^2}}h_{r,\tilde{r}}=\frac{1}{2}\Unb_{2,1}(A)+\binom{\lambda s^2}{2}k\right\}
\end{equation}
with equality if and only if, for
\begin{equation}\label{eq:tildegamma}
    \tilde{\gamma}=\frac{\Unb_{1,2}(A)}{\lambda s^2(\lambda s^2-1)}+\gamma,
\end{equation}
there is a subset $R\subset T_{N,2}$ of size $\binom{N}{2}(1+\left\lfloor\tilde{\gamma}\right\rfloor-\tilde{\gamma})$ such that for $(r,\tilde{r})\in R$, $\HammingSimilarity(r,\tilde{r})=\left\lfloor\tilde{\gamma}\right\rfloor$ and for $(r,\tilde{r})\in T_{N,2}\setminus R$, $\HammingSimilarity(A;r,\tilde{r})=\left\lfloor\tilde{\gamma}\right\rfloor+1$.

Now, the expression for the minimum in~\eqref{eq:intminimum} is still
\begin{equation}
    \lambda s^2(\lambda s^2-1)\left(\binom{\left\lfloor\tilde{\gamma}\right\rfloor}{2}+\left\lfloor\tilde{\gamma}\right\rfloor\left(\tilde{\gamma}-\left\lfloor\tilde{\gamma}\right\rfloor\right)\right)
\end{equation}
which (surprisingly!) is a strictly growing continuous function on $\tilde{\gamma}$. Hence the final corollary follows from noting that $\tilde{\gamma}\geq \gamma$ with equality if and only if $A$ is an $OA(N,k,s,1)$. 
\end{proof}

\begin{remark}
The only reason the above proof does not extend for $t>2$ is because the map $x\mapsto \binom{x}{t}$ is strictly convex only for $t=2$. For $t=2$, it is strictly convex on $x\geq t-1$; but it is not clear that we can force this condition on the $\HammingSimilarity(A;r,\tilde{r})$.
\end{remark}

\begin{proof}[Proof of Corollary~\ref{cor:HamminSimilarityvsUnbalance2}]
We just substitute the possible values in Corollary~\ref{cor:HamminSimilarityvsUnbalance1} and compute.
\end{proof}

\begin{proof}[Proof of Proposition~\ref{prop:columnorthogonality}]
Without loss of generality, after scaling $f$ by a constant, assume $\sum_{x\in S}f(x)^2=1$. Note that scaling $f$ by a constant doesn't affect the matrix $X(A,f)$.

If $A$ is an $OA(N,k,s,2)$, then $A$ is also an $OA(N,k,s,1)$ and so
\begin{equation}\label{eq:normf}
\sum_{j=1}^N f(A_{i,j})^2=\sum_{x\in S} n(A,x,j)\,f(x)^2=\sum_{x\in S} N/s\,f(x)^2=N/s,
\end{equation}
and, for $i\neq j$,
\begin{align*}
\begin{pmatrix}
    X(A,f)^TX(A,f)
\end{pmatrix}_{i,j}
&=\frac{s}{N}\sum_{k=1}^N f(A_{k,i})f(A_{k,j})&\text{(\eqref{eq:Xmatrix} \& \eqref{eq:normf})}\\
&=\frac{s}{N}\sum_{(x,y)\in S^2} n(A,(x,y),(i,j))f(x)f(y)\\
&=\frac{s}{N}\sum_{(x,y)\in S^2} \frac{N}{s^2}\, f(x)f(y)&(A OA(N,k,s,2))\\
&=\frac{1}{s}\sum_{(x,y)\in S^2}  f(x)f(y)\\
&=\frac{1}{s}\left(\sum_{x\in S} f(x)\right)\left(\sum_{y\in S}f(y)\right)\\
&=0&\text{\eqref{eq:fnullaverage}}
\end{align*}
Hence $X(A,f)^TX(A,f)=\bbI$, as we wanted to show.
\end{proof}

\begin{proof}[Proof of Proposition~\ref{prop:Dcriterion}]
If $A$ is an $OA(N,k,s,2)$, then, by Proposition~\ref{prop:columnorthogonality}, we have that $X(A,f)^TX(A,f)=\bbI$ and so $\mcD_f(A)=1$.
\end{proof}

\begin{proof}[Proof of Theorem~\ref{theo:DcriterionvsUnbalance}]
As in the proof of Proposition~\ref{prop:columnorthogonality}, assume that $\sum_{x\in S}f(x)^2=1$. Then \eqref{eq:normf} holds, because $A$ is an $OA(N,k,s,1)$, and so, for $i\neq j$,
\begin{align*}
\begin{pmatrix}
    X(A,f)^TX(A,f)
\end{pmatrix}_{i,j}
&=\frac{s}{N}\sum_{k=1}^N f(A_{k,i})f(A_{k,j})&\text{(\eqref{eq:Xmatrix} \& \eqref{eq:normf})}\\
&=\frac{s}{N}\sum_{(x,y)\in S^2} n(A,(x,y),(i,j))f(x)f(y)\\
&=\frac{s}{N}\sum_{(x,y)\in S^2} \left(n(A,(x,y),(i,j))-\frac{N}{s^2}\right)f(x)f(y)\\&\hspace{3cm}+\frac{s}{N}\sum_{(x,y)\in S^2} \frac{N}{s^2}\, f(x)f(y)\\
&=\frac{s}{N}\sum_{(x,y)\in S^2} \left(n(A,(x,y),(i,j))-\frac{N}{s^2}\right)f(x)f(y)\\&\hspace{3cm}+\frac{1}{s}\left(\sum_{x\in S} f(x)\right)\left(\sum_{y\in S}f(y)\right)&\\
&=\frac{s}{N}\sum_{(x,y)\in S^2} \left(n(A,(x,y),(i,j))-\frac{N}{s^2}\right)f(x)f(y)&\text{\eqref{eq:fnullaverage}},
\end{align*}
and so, by the Cauchy-Schwarz inequality and \eqref{eq:normf}, for $i\neq j$,
\begin{equation}
    \left|\begin{pmatrix}
    X(A,f)^TX(A,f)
\end{pmatrix}_{i,j}\right|\leq \frac{s}{N}\sqrt{\sum_{(x,y)\in S^2} \left|n(A,(x,y),(i,j))-\frac{N}{s^2}\right|^2}
\end{equation}
and thus
\begin{multline}
    \left\|X(A,f)^TX(A,f)-\bbI\right\|_F\\\leq \frac{s}{N}\sqrt{2\sum_{(i,j)\in T_{2,k}}\sum_{(x,y)\in S^2} \left|n(A,(x,y),(i,j))-\frac{N}{s^2}\right|^2}=\frac{s}{N}\sqrt{2\Unb_{2,2}(A)},
\end{multline}
which proves~\eqref{eq:XorthvsUnb}, since $s/N=1/(\lambda s)$.

To prove~\eqref{eq:XorthvsTolgen}, assume, for now, that $\sum_{x\in S}f(x)^2=1$. As shown above, for $i\neq j$, we have
\begin{equation}
    \begin{pmatrix}
    X(A,f)^TX(A,f)
\end{pmatrix}_{i,j}=\frac{s}{N}\sum_{(x,y)\in S^2} \left(n(A,(x,y),(i,j))-\lambda\right)f(x)f(y).
\end{equation}
Now, add and substract $\alpha$ to both $f(x)$ and $f(y)$, so that we get
\begin{multline}\label{eq:auxexpansion}
    \begin{pmatrix}
    X(A,f)^TX(A,f)
\end{pmatrix}_{i,j}
=\frac{s}{N}\sum_{(x,y)\in S^2} \left(n(A,(x,y),(i,j))-\lambda\right)(f(x)-\alpha)(f(y)-\alpha)\\
+\frac{s}{N}\sum_{(x,y)\in S^2} \left(n(A,(x,y),(i,j))-\lambda\right)\alpha(f(y)-\alpha)\\
+\frac{s}{N}\sum_{(x,y)\in S^2} \left(n(A,(x,y),(i,j))-\lambda\right)(f(x)-\alpha)\alpha\\
+\frac{s}{N}\sum_{(x,y)\in S^2} \left(n(A,(x,y),(i,j))-\lambda\right)\alpha^2
\end{multline}
Now, since $A$ is an $OA(N,k,s,1)$,
\begin{equation}\label{eq:auxauxsum1}
    \sum_{x\in S}n(A,(x,y),(i,j))=n(A,y,j)=\lambda s
\end{equation}
and
\begin{equation}\label{eq:auxauxsum2}
    \sum_{y\in S}n(A,(x,y),(i,j))=n(A,x,i)=\lambda s.
\end{equation}
Thus, by \eqref{eq:auxauxsum1},
\begin{multline}\label{eq:auxsum1}
    \sum_{(x,y)\in S^2} \left(n(A,(x,y),(i,j))-\lambda\right)\alpha(f(y)-\alpha)\\
    =\sum_{y\in S}\left(\sum_{x\in S}n(A,(x,y),(i,j))-\lambda\right)\alpha(f(y)-\alpha)
    =0
\end{multline}
and, by \eqref{eq:auxauxsum2},
\begin{multline}\label{eq:auxsum2}
    \sum_{(x,y)\in S^2} \left(n(A,(x,y),(i,j))-\lambda\right)(f(x)-\alpha)\alpha\\
    =\sum_{x\in S}\left(\sum_{y\in S}n(A,(x,y),(i,j))-\lambda\right)(f(x)-\alpha)\alpha
    =0.
\end{multline}
Also, by \eqref{eq:countingsum},
\begin{equation}\label{eq:auxsum3}
    \sum_{(x,y)\in S^2} \left(n(A,(x,y),(i,j))-\lambda\right)=0
\end{equation}

Hence, combining \eqref{eq:auxexpansion} with \eqref{eq:auxsum1}, \eqref{eq:auxsum2} and \eqref{eq:auxsum3}, we get
\begin{equation}\label{eq:zerodoublesum}
    \begin{pmatrix}
    X(A,f)^TX(A,f)
\end{pmatrix}_{i,j}
=\frac{s}{N}\sum_{(x,y)\in S^2} \left(n(A,(x,y),(i,j))-\lambda\right)(f(x)-\alpha)(f(y)-\alpha).
\end{equation}
Hence
\begin{equation}
    \left|\begin{pmatrix}
    X(A,f)^TX(A,f)
\end{pmatrix}_{i,j}\right|
\leq \frac{s}{\lambda}\Tol_2(A)\left(\frac{1}{s}\sum_{x\in S}|f(x)-\alpha|\right)^2,
\end{equation}
where optimizing with respect $\alpha$ gives the squared average absolute deviation, $\sigma_1(f)^2$, on the right hand side. 

Now, for general $f$, we don't have $\sum_{x\in S}f(x)^2=1$. However, $\frac{1}{\sqrt{s}\sigma_2(f)}f$ would satisfy that condition, and $\sigma_1\left(\frac{1}{\sqrt{s}\sigma_2(f)}f\right)=\frac{1}{\sqrt{s}}\frac{\sigma_1(f)}{\sigma_2(f)}$. From here, the claim follows, since $X(A,f)$ does not vary under scaling of $f$ by constants.
\end{proof}

\begin{proof}[Proof of Corollary~\ref{cor:DcriterionvsUnbalance}]
By assumption, we have that
\begin{equation}\label{eq:XAfspecialform}
    X(A,f)^TX(A,f)=\begin{pmatrix}
        1&v^T\\v&\bbI
    \end{pmatrix}
\end{equation}
with $v$ having at most $r$ non-zero components of absolute value at most $\frac{1}{\lambda}\left(\frac{\sigma_1(f)}{\sigma_2(f)}\right)^2\Tol_2(A)$. Hence, by~\cite[Proposition 1]{brentosbornsmith2015det}, we have that
\begin{equation}
    \det (X(A,f)^TX(A,f))= 1-\sum_i v_i^2\geq 1-\frac{r}{\lambda^2}\left(\frac{\sigma_1(f)}{\sigma_2(f)}\right)^4\Tol_2(A)^2,
\end{equation}
and so, given the assumption~\eqref{eq:DcritvsTolcond}, the result follows.
\end{proof}
\begin{proof}[Proof of Proposition~\ref{prop:sigmaquotient}]
The right-hand side of \eqref{eq:sigmaquotient} follows from the inequality between the arithmetic and quadratic means, which implies $\sigma_1(f)\leq \sigma_2(f)$. 

To prove the left-hand side of \eqref{eq:sigmaquotient}, assume that this is false and there is $f$ such that $\left(\sigma_1(f)/\sigma_2(f)\right)^2<1/(s-1)$. Then, let $A$ be the array of Proposition~\ref{prop:trivialconstruction} with $s$ levels. Then, by Corollary~\ref{cor:DcriterionvsUnbalance}, $\mcD_f(A)>0$. But, $\mcD_f(A)=0$, because, by construction, $A$ was constructed repeating one column. Hence a contradiction, proving the desired inequality.

To achieve the lower bound of \eqref{eq:sigmaquotient} take $f:[s]\rightarrow \bbR$ such that $f(1)=-(s-1)$ and $f(a)=1$ for $a\neq 1$. To achieve the upper bound, take any $f:[s]\rightarrow \{-1,0,1\}$ such that $f^{-1}(-1)$ and $f^{-1}(1)$ have the same size.
\end{proof}
\begin{proof}[Proof of Proposition~\ref{prop:phithetacriterionvsunbalance}]
Recall that
\[
D_{\alpha,\beta}(A)=\sum_{j\in T_{2,k}}\left(\sum_{x\in S^2}|n(A,x,j)-N/s^2|^{\alpha}\right)^\beta.
\]
Now, we have that
\begin{multline}\label{eq:auxmeanineq}
    \sum_{x\in S^2}|n(A,x,j)-N/s^2|^{\alpha\beta}\leq \left(\sum_{x\in S^2}|n(A,x,j)-N/s^2|^{\alpha}\right)^\beta \\\leq s^{2(\beta-1)}\sum_{x\in S^2}|n(A,x,j)-N/s^2|^{\alpha\beta},
\end{multline}
by the usual inequalities between the $\alpha$-mean and the $\alpha\beta$-mean. Now, adding \eqref{eq:auxmeanineq} over $j\in T_{2,k}$ gives $\Unb_{\alpha\beta,2}(A)$ on the left-hand side, and $s^{2(\beta-1)}\Unb_{\alpha\beta,2}(A)$ on the right-hand side. Hence the desired inequalities follow. For the identities with $D_1$ and $D_2$, we use that they the proven inequalities are equalities when $\beta=1$.
\end{proof}

\begin{proof}[Proof of Proposition~\ref{prop:bwvstol}]
By the triangle inequality, for any $x,y\in S^t$ and $i,j\in T_{t,k}$, we have
\[
\left|n(A,x,i)-n(A,y,i)\right|\leq \left|n(A,x,i)-N/s^t\right|+\left|n(A,y,i)-N/s^t\right|\leq 2\Tol_t(A).
\]
Therefore, maximizing on the left-hand side, $\mathrm{BW}_t(A)\leq 2\Tol_t(A)$.

Fix $x\in S^t$ and $i\in T_{t,k}$, then
\begin{align*}
\left|n(A,x,i)-N/s^t\right|&=\left|n(A,x,i)-\frac{1}{\binom{k}{t}s^t}\sum_{j\in T_{t,k}}\sum_{y\in S^t}n(A,y,j)\right|&\\
&=\left|\frac{1}{\binom{k}{t}s^t}\sum_{j\in T_{t,k}}\sum_{y\in S^t}\left(n(A,x,i)-n(A,y,j)\right)\right|\\
&\leq \frac{1}{\binom{k}{t}s^t}\sum_{j\in T_{t,k}}\left|n(A,x,i)-n(A,y,j)\right|\\&\leq \mathrm{BW}_t(A),
\end{align*}
where the first equality follows from $\sum_{j\in T_{t,k}}\sum_{y\in S^t}n(A,y,j)=N\binom{k}{t}$. Hence, maximizing the left-hand side, we get $\Tol_t(A)\leq\mathrm{BW}_t(A)$.
\end{proof}

\begin{proof}[Proof of Theorem~\ref{thm:discrepancyvsunbalance}]
The equation~\ref{eq:discrepancyvsunbalance2} follows from the definition of DD (Definition \ref{dfn:DD}), see \cite[Theorem~2]{UDpaper13}. For the other identity~\eqref{eq:discrepancyvsunbalance1}, note that
\begin{align*}
    \sum_{r,\tilde{r}}\rho^{\HammingSimilarity(A;r,\tilde{r})}&=\sum_{t=0}^{\infty}\sum_{r,\tilde{r}}\binom{\HammingSimilarity(A;r,\tilde{r})}{t}(\rho-1)^t&\text{(Newton's binomial)}\\
    &=\sum_{t=0}^k\binom{k}{t}\left(\frac{\rho-1}{s}\right)^t+\sum_{t=0}^{\infty}\Unb_{2,t}(A)(\rho-1)^t&\text{(Theorem~\ref{thm:HamminSimilarityvsUnbalance})}\\
    &=\left(1+\frac{\rho-1}{s}\right)^k+\sum_{t=0}^{\infty}\Unb_{2,t}(A)(\rho-1)^t&\text{(Newton's binomial)}\\
    &=\left(1+\frac{\rho-1}{s}\right)^k+\sum_{t=1}^{\infty}\Unb_{2,t}(A)(\rho-1)^t.&(\Unb_{2,0}(A)=0)
\end{align*}
For the remaining of the identities, we use the formulas for CD, MD and WD in \cite[(3) and (4)]{UDpaper2} and \cite[(4)]{ke2015}, together with easily obtainable bounds for points of the form $\left(\frac{2x_i-1}{2s}\right)$.
\end{proof}

\begin{proof}[Proof of Corollary~\ref{cor:discrepancyvsunbalance}]
The proof of this corollary is analogous to that of Corollary~\ref{cor:HamminSimilarityvsUnbalance1}. We apply again just \cite[Lemma~2]{UDpaper8liuhickernell} and the proof is just similar: We show the inequality with $\tilde{\gamma}$ of \eqref{eq:tildegamma} instead of $\gamma$. Then we argue that the lower bound is a growing function of $\tilde{\gamma}$ and so deduce the bound with $\gamma$ instead of $\tilde{\gamma}$.
\end{proof}

\end{document}